\documentclass[10pt,a4paper,reqno]{article}
\usepackage[utf8]{inputenc} 
\usepackage[T1]{fontenc} 

\usepackage{libertine}
\usepackage[margin=2.75cm]{geometry}
 
\usepackage{dsfont}
 
\usepackage{amsmath,amsfonts,amssymb,amsthm, stmaryrd,bbm,graphicx,mathtools,enumerate}
  
\usepackage{mathrsfs}
\usepackage[english]{babel}
\usepackage[tracking,spacing=false,kerning,babel]{microtype}
 
\usepackage{framed} 
\usepackage{wrapfig} 

\usepackage{mathalfa}
\usepackage{upgreek}
\usepackage{multicol}
\usepackage{moresize}
\usepackage{graphicx}

\usepackage[final]{hyperref}   
\usepackage[dvipsnames]{xcolor}
\hypersetup{
	colorlinks=true,
	pdfpagemode=UseNone,
    citecolor=Green,
    linkcolor=OrangeRed,
    urlcolor=black,
	pdfstartview=FitW
}
\usepackage{tikz}
\usetikzlibrary{arrows.meta, angles, quotes}
 
\newtheoremstyle{mine}
{\baselineskip}
{\baselineskip}
{\itshape}
{
}
{\bfseries}
{.}
{.5em}
{#1 #2\ifx#3\relax\else~(#3)\fi}

\theoremstyle{mine}

\newtheorem{thm}{Theorem}[section]
\newtheorem{prop}[thm]{Proposition}
\newtheorem{lem}[thm]{Lemma}

\newtheorem{cor}[thm]{Corollary}
\newtheorem{conj}[thm]{Conjecture}

\theoremstyle{remark}
\newtheorem{rem}[thm]{Remark}
        
\colorlet{shadecolor}{blue!10}

\renewcommand{\epsilon}{\varepsilon}

\def\R{{\mathbb R}}
\def\E{{\mathbb E}}
\def\P{{\mathbb P}}

\def\N{{\mathbb N}}

\def\nf{\mathcal{F}}
\def\calN{\mathcal{N}}

\newcommand{\ind}[1]{\mathbbm{1}_{\left\{ #1 \right\}}}
\renewcommand{\epsilon}{\varepsilon}

\newcommand{\cC}{\mathcal{C}}
\newcommand{\cG}{\mathcal{G}}

\newcommand{\cP}{\mathcal{P}}
\newcommand{\cS}{\mathcal{S}}
\newcommand{\cW}{\mathcal{W}}

\DeclareMathOperator{\re}{Re}
\DeclareMathOperator{\im}{Im}
\newcommand{\1}{\mathbbm{1}}
\newcommand{\diff}{\mathop{}\mathopen{}\mathrm{d}}
\newcommand{\abs}[1]{\left\lvert#1\right\rvert}

\let\originalleft\left
\let\originalright\right
\renewcommand{\left}{\mathopen{}\mathclose\bgroup\originalleft}
\renewcommand{\right}{\aftergroup\egroup\originalright}

\newlength{\bibitemsep}
\newlength{\bibparskip}
\let\oldthebibliography\thebibliography
\renewcommand\thebibliography[1]{\oldthebibliography{#1}
	\setlength{\parskip}{\bibitemsep}
	\setlength{\itemsep}{\bibparskip}}

\newcommand{\C}{\mathbb{C}}

\numberwithin{equation}{section}

\title{Fluctuations of additive martingale limits of branching Brownian motion}
\author{Xinxin Chen%
	\footnote{Beijing Normal University, School of Mathematical Sciences, China. Email: xinxin.chen@bnu.edu.cn} 
	\and Michel Pain%
	\footnote{Institut de Mathématiques de Toulouse, Université de Toulouse, CNRS, France. Email: michel.pain@math.univ-toulouse.fr}}

\begin{document}

\maketitle

\begin{abstract}
	Consider a one-dimensional branching Brownian motion. Let $W_\infty(\beta)$ denote the limit of the additive martingale in the subcritical regime $\abs{\beta} < \beta_c$ and $Z_\infty$ be the limit of the derivative martingale at criticality. 
	Madaule (\emph{Stochastic Process. Appl.} 126 (2016), no. 2, 470--502) established the following convergence
	\[
	\frac{W_\infty(\beta)}{\beta_c-\beta}\xrightarrow[\beta\nearrow \beta_c]{\P} 2Z_\infty.
	\]
	The goal of this paper is twofold: firstly, we strengthen this result into an almost sure convergence; secondly, we describe the fluctuations occurring in this convergence by proving
	\[
	\frac{1}{\beta_c-\beta}\left( \frac{W_\infty(\beta)}{\beta_c-\beta} - 2 Z_\infty +2(\beta_c-\beta)\log(\beta_c-\beta) Z_\infty\right)
	\xrightarrow[\beta\nearrow \beta_c]{(\textup{d})} S,
	\]
	where, conditionally on $Z_\infty$, $S$ follows a spectrally negative 1-stable distribution with scale and shift parameters proportional to $Z_\infty$.
	Furthermore, these results are extended to the setting of complex additive martingales and the fluctuations to a multi-dimensional convergence.
\end{abstract}

\medskip 

\noindent\textbf{Keywords:} Branching Brownian motion, additive martingale, derivative martingale, almost sure convergence, fluctuations, 1-stable integrals, Gaussian multiplicative chaos.

\noindent\textbf{MSC2020:} Primary 60J80; Secondary 60F17, 82B44.

\section{Introduction}

\subsection{Definitions and motivations}

\paragraph{The model.}

Branching Brownian motion (BBM) is a particle system on the real line defined as follows. 
It starts with one particle at the origin at time $t=0$. Each particle moves according to a Brownian motion with drift $1$ during a lifetime which is exponentially distributed with parameter $\lambda>0$.
When it dies, it is replaced at its position by a random number of children chosen according to the law of some integer-valued random variable $L$.
The displacement, lifetime and number of offspring of a particle are independent, and each particle behaves independently of each other.
We assume that
\begin{equation}\label{hyp}
	\E[L]>1, \quad \E[L^2]<\infty, 
	\quad \text{and} \quad \lambda = \frac{1}{2\E[L-1]}.
\end{equation}
The assumption $\E[L]>1$ ensures that the population survives indefinitely with positive probability.
The particular choice of the drift and of $\lambda$ is made for simplicity of forthcoming formulas, but they are not restrictive because one can modify them up to some affine modification of the process (using the scaling property of Brownian motion).
This choice matches the settings of the papers \cite{MaiPai2019,MaiPai2021}, on which we rely heavily here.

Let $\calN_t$ denote the set of alive particles at time $t$. 
For any $u\in\calN_t$, let $X_u(t)$ denote its position at time $t$ and, for $s\in[0,t]$, write $X_u(s)$ for the position at time $s$ of the unique ancestor of $u$ alive at time $s$. 
Note that in our setting, we have
\begin{equation}
\E\left[\sum_{u\in\calN_t}e^{-X_u(t)}\right]=1,\quad \E\left[\sum_{u\in\calN_t}X_u(t)e^{-X_u(t)}\right]=0,
\quad \text{and} \quad
\E\left[\sum_{u\in\calN_t}X_u(t)^2 e^{-X_u(t)}\right]=t.
\end{equation}
Let $(\nf_t)_{t\ge0}$ denote the natural filtration of the system. 

\paragraph{The additive martingales.}
For $\beta \in \R$, we define
\[
W_t(\beta) \coloneqq \sum_{u\in \calN_t} e^{-\beta X_u(t)-(1-\beta)^2t/2}, \qquad t \geq 0.
\]
Then, $(W_t(\beta))_{t\ge0}$ is a non-negative $(\nf_t)_{t\ge0}$-martingale, which is referred to as the \emph{additive martingale} of parameter $\beta$. 
As a consequence, it has an almost sure limit $W_\infty(\beta)$.
In the context of BBM, these martingales have been introduced by McKean \cite{McK1975} to give a probabilistic construction of traveling wave solutions to the F-KPP equation.
Around the same time, they were also used in the study of the minimal position of a branching random walk (BRW), the discrete-time counterpart of BBM, by Joffe, Lecam and Neveu \cite{JofLeCNev1973} and then Kingman \cite{Kin1975}. See also earlier appearances in \cite{Wat1968}, \cite[Section~VI.4]{AthNey1972} and \cite[Section~7]{JofMon1973}.

The additive martingales exhibit a phase transition, first showed by Biggins \cite{Big1977} for the BRW (see \cite{Nev1988,Kyp2004} for the BBM case): under the optimal assumption $\E[L(\log_+L)]<\infty$,
\begin{itemize}
	\item if $\abs{\beta}<1$, then $(W_t(\beta))_{t\ge0}$ is uniformly integrable and $W_\infty(\beta)>0$ a.s.\@ on the survival event;
	\item if $\abs{\beta} \geq 1$, then $W_\infty(\beta)=0$ a.s.
\end{itemize}
Moreover, these martingales limits play an important role in the understanding of the bulk of the BBM.
Indeed, when $\abs{\beta}<1$, the limit $W_\infty(\beta)$ describes the growth of the BBM around the line of slope $1-\beta$: for example, the number of particles in $(1-\beta)t+[a,b]$ at time $t$, divided by its expectation, converges a.s.\ to $W_\infty(\beta)$, see \cite{Wat1968} for a partial version%
\footnote{Watanabe studies in this paper a general branching Markov process and considers the number of particles in a fixed interval. However, up to adding a drift $-(1-\beta)$ to our definition of the BBM, his result implies the claim stated here as soon as the additive martingale is bounded in $L^2$, that is for $\abs{\beta} < 1/2$.}, 
\cite{Big1979,Uch1982} for the BRW case and \cite[Section~5]{Big1992} for a general result including the BBM case (see also \cite[Theorem~4.7]{Cha2024} for a simple approach written directly for the BBM).

\paragraph{The derivative martingale.}

The critical case $\beta_c=1$ corresponds to the direction of the minimum: indeed, in that case $1-\beta_c=0$ and
\[
	\frac{1}{t} \min_{u\in \calN_t} X_u(t) \xrightarrow[t\to\infty]{(\text{a.s.})} 0.
\]
But $W_\infty(1) = 0$ so the limit of the critical additive martingale is not the right quantity to describe the growth of the BBM in the direction of its minimum.
The appropriate quantity has been found by Lalley and Sellke \cite{LalSel1987} and is defined as follows:
\begin{equation} \label{eq:def_derivative_mart}
	Z_t \coloneqq - \frac{\partial W_t(\beta)}{\partial \beta}\bigg|_{\beta=1}
	= \sum_{u\in\calN_t} X_u(t) e^{-X_u(t)},
	\qquad t \geq 0.
\end{equation}
Then, $(Z_t)_{t\ge0}$ is a $(\nf_t)_{t\ge0}$-martingale, called the \emph{derivative martingale}. It has mean 0, but Lalley and Sellke \cite{LalSel1987} proved that it converges a.s.\@ to a limit $Z_\infty$ which is positive a.s.\@ on the survival event (see \cite{YanRen2011} for a proof under the optimal assumption $\E[L(\log_+L)^2]<\infty$).

There are two other natural ways to try to define a non-trivial limit which would replace $W_\infty(1) = 0$, and it turns out that both methods yield a limit which is a constant multiple of $Z_\infty$.
The first one consists in rescaling the critical additive martingale $W_t(1)$ to obtain a non trivial limit: it has been proved by Aïdékon and Shi \cite{AidShi2014} (see also \cite{BouMai2019} for a simplified approach) that
\begin{equation} \label{eq:seneta-heyde}
	\sqrt{t} W_t(1) 
	= \sqrt{t} \sum_{u\in\calN_t} e^{-X_u(t)}
	\xrightarrow[t\to\infty]{(\P)} 
	\sqrt{\frac{2}{\pi}} Z_\infty.
\end{equation}
The second approach consists in rescaling the subcritical limits $W_\infty(\beta)$ while letting $\beta$ approach $1$ from below: Madaule \cite{Mad2016} proved that 
\begin{equation} \label{eq:left-derivative_of_martingale_limits}
	\frac{W_\infty(\beta)}{1-\beta}
	\xrightarrow[\beta\nearrow 1]{(\P)} 
	2Z_\infty.
\end{equation}
Note that the factor $2$ appearing here, compared to the convergence $Z_t \to Z_\infty$, means that the derivative w.r.t. $\beta$ at $1$ and the limit $t \to \infty$ cannot be interchanged.

Several results have shown that the limit of the derivative martingale $Z_\infty$ is indeed the appropriate quantity to describe the growth in the direction of its minimum.
Initially, it has been used by Lalley and Sellke \cite{LalSel1987} to provide a probabilistic description of the limit of the recentered minimal position, previously obtained by Bramson \cite{Bra1983} in terms of a traveling wave of the F-KPP equation: more precisely, they proved that, for some constant $c_*>0$,  almost surely,
\begin{equation}
\lim_{s\to \infty} \lim_{t\to\infty} 
\P\left( \min_{u\in \calN_t} X_u(t) - \frac{3}{2} \log t \leq x \middle| \nf_s \right) = 1-e^{-c_* Z_\infty e^x},
\end{equation}
which is the cumulative distribution function, conditional on $Z_\infty$, of the opposite of a Gumbel r.v. with shift $\log (c_*Z_\infty)$ and scale 1.
Similarly, $Z_\infty$ also appears in the description of the limit of the extremal process obtained in \cite{AidBerBruShi2013,ArgBovKis2013}.
To study particles at a distance of order $\sqrt{t}$ above the minimum, one can introduce, for any test function $f \colon \R \to \R$, the quantity
\begin{equation} \label{eq:def_Z_t(f)}
Z_t(f) 
\coloneqq  \sum_{u\in\calN_t} X_u(t) e^{-X_u(t)} f \left( \frac{X_u(t)}{\sqrt{t}} \right).
\end{equation}
All these quantities turn out to be asymptotically proportional to $Z_\infty$: 
Madaule \cite{Mad2016} and Maillard and Zeitouni \cite{MaiZei2016} proved that, 
if $f$ is continuous on $(0,\infty)$ and $\abs{f(x)} \leq C(1+x^{-1})$ for any $x>0$, then
\begin{equation} \label{eq:cv_Z_t(f)}
Z_t(f) \xrightarrow[t\to\infty]{(\P)} \rho(f) Z_\infty, 
\qquad \text{where } \rho(f) \coloneqq \int_0^\infty f(x) \sqrt{\frac{2}{\pi}} x^2 e^{-x^2/2} \diff x.
\end{equation}
Note that this convergence implies \eqref{eq:seneta-heyde} by taking $f(x) = 1/x$.
Other quantities describing particles in the direction of the minimum, i.e. with a position $o(t)$ at time $t$, have been considered and their asymptotic behavior is always given by a deterministic multiple of $Z_\infty$, see e.g.\@
\cite{Pai2018} for near-critical additive martingales, \cite[Proposition 2.6]{MaRen2026} for a quantity similar to $Z_t(f)$ but with a different normalization inside the test function, and \cite{Fla2025} for level sets.

\paragraph{Motivations.}

We have seen three natural ways to introduce $Z_\infty$: either as the almost sure limit of the derivative martingale $(Z_t)_{t\geq 0}$, or as the limit in probability of the rescaled critical additive martingale in \eqref{eq:seneta-heyde} and of the rescaled subcritical additive martingale limits in \eqref{eq:left-derivative_of_martingale_limits}. 
A first natural question is whether these last two convergences can be improved to almost sure convergences.
Aïdékon and Shi \cite{AidShi2014} have already proved that the convergence \eqref{eq:seneta-heyde} does not hold almost surely, more precisely they showed 
\[
\limsup_{t \to \infty} \sqrt{t} W_t(1) = \infty, \quad \text{a.s.}
\]
The first main result of this paper (Theorem~\ref{thm:as}) proves that the convergence \eqref{eq:left-derivative_of_martingale_limits} actually holds almost surely. 

A second natural question concerns the fluctuations appearing in these convergences. 
This has already been investigated for the convergence of the derivative martingale in \cite{MaiPai2019} and then for the convergence of the rescaled critical additive martingale \eqref{eq:seneta-heyde} in \cite{MaiPai2021}.
More precisely, the second paper builds on the first one to obtain the fluctuations appearing in the convergence \eqref{eq:cv_Z_t(f)} of $Z_t(f)$ towards $\rho(f) Z_\infty$, for a large class of functions $f$, including $f(x) = 1/x$ for which $Z_t(f) = \sqrt{t} W_t(1)$: the authors prove that
\begin{equation} \label{eq:fluctu_Z_t(f)}
	\sqrt{t} \left( Z_t(f) - \rho(f) Z_\infty - \frac{c(f) \log t}{\sqrt{t}} Z_\infty \right)
	\xrightarrow[t\to\infty]{(\text{d})} S(f),
\end{equation}
where $c(f)$ is a constant depending on $f$ and, conditionally on $Z_\infty$, $S(f)$ has a 1-stable distribution with parameters depending explicitly on $f$.
Our second main result (Theorem~\ref{thm:fluctu}) describes the fluctuations appearing in the convergence \eqref{eq:left-derivative_of_martingale_limits}.

\paragraph{Complex martingales.}

As a tool to prove that the convergence \eqref{eq:left-derivative_of_martingale_limits} holds almost surely, we rely on a contour integral argument involving the additive martingales with complex parameter: for $z \in \C$, let 
\[
W_t(z) \coloneqq \sum_{u\in \calN_t}  e^{-z X_u(t)-(1-z)^2t/2}, \qquad t \geq 0.
\]
Henceforth, we actually obtain our results in the more general setting where the parameter $\beta$ approaching~$1$ from below is replaced by the parameter $z$ approaching $1$ while staying in some region of the complex plane.
We consider the following subsets of the complex plane, which are represented in Figure \ref{fig-1}:
\begin{align}
\cP_1 & \coloneqq 
\left\{ \beta + i \tau \in \C : \abs{\beta} \le \frac12, \beta^2+\tau^2< \frac12 \right\} 
\cup \left\{ \beta+i\tau \in \C : \frac12 < \abs{\beta} < 1, \abs{\tau} < 1 - \abs{\beta} \right\}, \nonumber \\
\cP_{1,2} & \coloneqq 
\left\{ \beta+i\tau \in \C : \frac12 < \abs{\beta} < 1, \abs{\tau} = 1 - \abs{\beta} \right\}. \label{eq:def_phases1}
\end{align}
The open set $\cP_1$ is usually called the \emph{phase~1} of complex additive martingales, and $\cP_{1,2}$ the \emph{boundary 1-2}, as it is at the boundary of both phases 1 and 2, see Section \ref{sec:related_literature} for more details on the phases of complex additive martingales and the related literature.
To state our results, we simply mention here that, by \cite{Big1992,HarKli2015,KolMei2017,HarKli2018}, $\cP_1 \cup \cP_{1,2}$ is exactly the set of points $z \in \C$ such that the martingale $(W_t(z))_{t \geq 0}$ is uniformly integrable%
\footnote{
	The phase $\cP_1$ is covered in \cite{Big1992} for general models including the BBM. 
	The boundary $\cP_{1,2}$ is treated in \cite{KolMei2017} for the BRW, but their result can be applied to the BBM: a BBM along integer times is a BRW so $(W_n(z))_{n\in\N}$ is uniformly integrable and hence closed, which implies $(W_t(z))_{t \geq 0}$ is closed and hence uniformly integrable. 
	The fact that the martingale is not uniformly integrable on $\C \setminus (\cP_1 \cup \cP_{1,2})$ is a consequence of \cite{HarKli2015,HarKli2018}; the case of $\partial \cP_1 \setminus \cP_{1,2}$ is also covered in \cite{KolMei2017} .
}. 
In particular, for any $z\in \cP_1 \cup \cP_{1,2}$ this martingale has a nontrivial limit $W_\infty(z)$.
Moreover, Biggins~\cite[Theorem~6]{Big1992} proved that $(W_t(z))_{t \geq 0}$ converges to $W_\infty(z)$ uniformly on any compact subset of $\cP_1$ almost surely.
As a consequence (see e.g.\ \cite[Theorem 10.28]{Rud1987}), the function $z \mapsto W_\infty(z)$ is analytic on $\cP_1$ a.s.\ and, for any $n \geq 1$, $(\partial^n W_t(z)/(\partial z)^n)_{t \geq 0}$ converges to $\partial^n W_\infty(z)/(\partial z)^n$ uniformly on any compact subset of $\cP_1$ a.s.
\begin{figure}[htbp]
	\centering
	\begin{tikzpicture}[scale=4, >=latex]
	\def\rad{0.707106781}  
	
	\fill[Yellow!50, draw=none]
	(1,0) -- (0.5,0.5) 
	arc[start angle=45, end angle=135, radius=\rad] 
	-- (-0.5,0.5) -- (-1,0) 
	-- (-0.5,-0.5) 
	arc[start angle=-135, end angle=-45, radius=\rad] 
	-- (0.5,-0.5) -- cycle;
	
	\draw[dashed, Black!70] (0,0) -- (0.5,0.5);
	\draw[dashed, Black!70] (0.5,0.5) -- (0.5,-0.5);
	\draw[dashed, Black!70] (-0.5,0.5) -- (-0.5,-0.5);
	\draw[dashed, Black!70] (0,0) circle (\rad);
	\draw[dashed,Black!70,<->] (-115:.02) -- (-115:\rad-.02);
	\node[Black!70] at (-.25,-.3) {$\frac{1}{\sqrt{2}}$};
	
	\draw[->] (-1.2,0) -- (1.25,0) node[below right] {$\beta = \textrm{Re}(z)$};
	\draw[->] (0,-0.8) -- (0,0.85) node[left] {$\tau = \textrm{Im}(z)$};
	
	
	\draw[ForestGreen, very thick] 
	(1,0) -- (0.5,0.5)   
	(0.5,-0.5) -- (1,0)  
	(-1,0) -- (-0.5,0.5) 
	(-0.5,-0.5) -- (-1,0); 
	
	\fill (1,0) circle (0.5pt);
	\fill (-1,0) circle (0.5pt);
	\fill[Black] (0.5,0.5) circle (0.5pt);
	\fill[Black] (0.5,-0.5) circle (0.5pt);
	\fill[Black] (-0.5,0.5) circle (0.5pt);
	\fill[Black] (-0.5,-0.5) circle (0.5pt);
	
	\node at (0.54,0.01) [below] {$\frac12$};
	\node at (0.70710678+.04,0.01) [below] {$\frac{1}{\sqrt{2}}$};
	\node at (1.02,0) [below] {$1$};
	
	\draw[->] (0.2,0) arc[start angle=0, end angle=45, radius=0.2];
	\node at (0.24,0.08) {$\frac{\pi}{4}$};
	
	\node at (-.25,.3) {\Large $\cP_1$};
	\node[ForestGreen] at (.85,.33) {\Large $\cP_{1,2}$};

	\end{tikzpicture}
	\caption{The open set $\cP_1$ is pictured in yellow. The four lines constituting $\cP_{1,2}$ are represented in green, the endpoints are not included.}
	\label{fig-1}
\end{figure}

\subsection{Main results}

\paragraph{Almost sure convergences.}

Our first main result shows that the convergence \eqref{eq:left-derivative_of_martingale_limits} holds almost surely, and extends it to a complex parameter $z$ 
approaching 1 \emph{non-tangentially} in $\cP_1$.

\begin{thm} \label{thm:as}
	Let $\theta \in (0,1)$. 
	Almost surely, the following convergence holds as $\beta \nearrow 1$, uniformly in $\abs{\tau} \leq \theta(1-\beta)$, with $z=\beta+i\tau$,
	\[
	\frac{W_\infty(z)}{1-z} \longrightarrow 2Z_\infty.
	\]
\end{thm}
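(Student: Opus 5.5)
The strategy is to upgrade Madaule's convergence in probability \eqref{eq:left-derivative_of_martingale_limits} to an almost sure statement using analyticity of $z\mapsto W_\infty(z)$ on $\cP_1$. A Cauchy integral then transfers control from a sparse set of parameters to whole regions. The first step is a decomposition at an intermediate time $s$. By the branching property, for $z\in\cP_1$,
\[
W_\infty(z) = \sum_{u\in\calN_s} e^{-zX_u(s)-(1-z)^2 s/2}\, W^{(u)}_\infty(z),
\]
where the $W^{(u)}_\infty$ are i.i.d.\ copies independent of $\nf_s$. Dividing by $1-z$ and comparing with the decomposition $Z_\infty=\sum_u e^{-X_u(s)}(Z^{(u)}_\infty + X_u(s)W^{(u)}_\infty(1))=\sum_u e^{-X_u(s)}Z^{(u)}_\infty$ (since $W_\infty(1)=0$), we need, for $z$ near $1$, that $\E[W_\infty(z)/(1-z)]=1/(1-z)$ plus a centered part is well approximated.

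The plan is therefore quantitative. First, prove a moment bound
\[
\E\Bigl[\sup_{z\in D_\epsilon}\Bigl|\frac{W_\infty(z)}{1-z}-2Z_\infty\Bigr|^{p}\wedge 1\Bigr]\le C\epsilon^{\delta},
\]
where $D_\epsilon=\{z=\beta+i\tau:1-\beta\in[\epsilon,2\epsilon],\ |\tau|\le\theta(1-\beta)\}$, for some $p\in(0,1]$ and $\delta>0$. Then apply Borel--Cantelli along $\epsilon=2^{-n}$. The sets $D_{2^{-n}}$ cover the whole cone near $1$, which gives the uniform a.s.\ convergence.

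To obtain the sup over $D_\epsilon$ from pointwise estimates, I would use the Cauchy formula on a slightly larger region $D'_\epsilon$ (the cone with angle $\theta'\in(\theta,1)$ and $1-\beta\in[\epsilon/2,4\epsilon]$), which still lies in $\cP_1$ because $\theta'<1$. Writing $F(z)=W_\infty(z)-2(1-z)Z_\infty$, which is analytic, we have $\sup_{D_\epsilon}|F|\le C\epsilon^{-1}\oint_{\partial D'_\epsilon}|F|\,|\diff z|$, and the contour has length of order $\epsilon$. It then suffices to bound $\E|F(z)|^p\wedge\cdots$ pointwise by $o(\epsilon^{p})\epsilon^{\delta}$, uniformly on $\partial D'_\epsilon$.

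The pointwise estimate is the core step. I would prove it via the truncated/killed martingale technique of \cite{MaiPai2019,MaiPai2021}. Pick $s=s(\epsilon)$ with $\epsilon^{-2}\ll s$, or rather $s\approx\epsilon^{-2}$, times a slowly growing factor. On the event where the BBM stays above $-\log$-barrier levels, $W_\infty(z)-W_s(z)$ has small conditional second moment. We also have $W_s(z)\approx(1-z)\sum_u X_u(s)e^{-X_u(s)}g((1-z)X_u(s), (1-z)^2s)$ for an explicit $g$. Using \eqref{eq:cv_Z_t(f)}-type estimates with complex test functions, this gives $2(1-z)Z_s$ plus errors of order $|1-z|^{1+\delta}$.

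I expect the main obstacle to be obtaining polynomial rates rather than mere convergence. The issue is heavy tails: $Z_\infty$ has tail $\sim1/x$, so second moments fail. I would handle this by working on the event $\{\min_{u,t}X_u(t)\ge -A\}$, which has probability $\to1$ as $A\to\infty$, and where truncated second moments are finite. A single fixed $A$ suffices for a.s.\ statements, since the union over $A$ covers almost every outcome. Complex $z$ adds a phase factor $e^{-i\tau X_u}$, but $|\tau|\le\theta'(1-\beta)$ keeps $|e^{-zX}|$ comparable to $e^{-\beta X}$, and the second moment computation of the Gaussian-type kernel remains finite precisely because $\theta'<1$.
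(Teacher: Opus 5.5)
\textbf{Overall.} Your outer argument is the paper's. It combines a pointwise first-moment bound on a barrier event of probability close to $1$ with the Cauchy formula on a contour inside a wider cone $\theta'\in(\theta,1)$ plus Fubini to control the supremum. It then uses Markov and Borel--Cantelli along $1-\beta\asymp 2^{-n}$, and a union over barrier levels. Two small corrections:
\begin{itemize}
\item The transfer from pointwise bounds to the supremum only works with $p=1$ and the indicator of the barrier event. The map $x\mapsto x^p\wedge 1$ is concave, so pointwise bounds on $\E[|F(\zeta)|^p\wedge 1]$ along the contour do not control $\E[(\epsilon^{-1}\oint|F|\,|\diff\zeta|)^p\wedge 1]$.
\item $\theta'<1$ is needed only to keep the contour inside $\cP_1$, where analyticity holds. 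The moment bounds themselves hold up to $|\tau|\le 1-\beta$.
\end{itemize}

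\textbf{The gap: the core pointwise estimate.} You compare $W_\infty(z)$ with $W_s(z)$ and assert $W_s(z)\approx 2(1-z)Z_s$. You never say where the factor $2$ comes from, and with $W_s(z)$ itself the constant is wrong at the natural scale.

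Write $1-z=w/\sqrt{s}$. Then $W_s(z)=(1-z)\bigl(Z_s(F_w)+Z_s(g_w)\bigr)$ exactly, with $F_w$ as in \eqref{eq:def_F_w} and $g_w(x)=e^{-wx-w^2/2}/(wx)$.
\begin{itemize}
\item On the event that all particles are positive at time $s$, the term $(1-z)Z_s(F_w)$ is the $\mathcal{F}_s$-conditional mean of the limit of the martingale killed at $0$ after time $s$. Many-to-one with the killed Brownian kernel produces the factor $2\sinh((1-z)x)$.
\item The term $(1-z)Z_s(g_w)$ is the mass removed by the killing. Since $g_w$ satisfies the hypotheses of \eqref{eq:cv_Z_t(f)}, $Z_s(g_w)\to\rho(g_w)Z_\infty$.
\item For real $w>0$, $\rho(g_w)=\frac{1}{w}\sqrt{2/\pi}\int_0^\infty x e^{-(x+w)^2/2}\diff x>0$, about $0.17$ at $w=1$.
\end{itemize}
Hence at $s\asymp(1-\beta)^{-2}$, $W_s(z)/(1-z)$ and $W_\infty(z)/(1-z)$ differ by order $Z_\infty$, not $|1-z|^{\delta}$. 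On the barrier event, the conditional second moment of $W_\infty(z)-W_s(z)$ contains this bias squared, so it is not small.

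Your hedge $s\gg(1-\beta)^{-2}$ shrinks the bias only like $e^{-\re(w^2)/2}$, which forces $|w|\to\infty$. You would then need concentration of $Z_s(f)$ for the test functions $e^{wx-w^2/2}/(wx)$, uniformly as $|w|\to\infty$. The constants in the Maillard--Pain bounds depend on the exponential growth rate $\kappa\asymp|w|$ of the test function. Neither your sketch nor the cited results supply that uniformity.

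\textbf{The missing idea.} Use the killed conditional mean itself as the intermediate object. On the barrier event, $W_\infty(z)/(1-z)$ is within $O(s^{-1/4})$ of $Z_s(F_w)$, by a conditional-variance bound via many-to-two. Moreover $\rho(F_w)=2$ for every $w\in\C$, which is exactly where the factor $2$ comes from. This lets you take $1-z=w/\sqrt{s}$ with $w$ in a fixed compact set. There the Maillard--Pain estimates for $Z_s(F_w)-2Z_s$ and for $Z_s-Z_\infty$ hold with uniform constants, each at rate $s^{-1/4}$.
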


In particular, Theorem~\ref{thm:as} implies that the function $\beta \mapsto W_\infty(\beta)$ is left-differentiable at $1$, with left-derivative equal to $-2Z_\infty$.
Furthermore, recalling that the derivative $\partial W_\infty(z)/\partial z$ is well-defined on $\cP_1$, another natural question is the convergence of this derivative as $z \to 1$.
While this question would have been non-trivial if we had worked only on the real axis, this convergence is here a direct consequence of Theorem~\ref{thm:as} thanks to the Cauchy integral formula for the derivative of an analytic function and we deduce the following result.

\begin{cor} \label{cor:derivative}
	Let $\theta \in (0,1)$. 
	Almost surely, the following convergence holds as $\beta \nearrow 1$, uniformly in $\abs{\tau} \leq \theta(1-\beta)$, with $z=\beta+i\tau$,
	\[
	\frac{\partial W_\infty(z)}{\partial z} \longrightarrow -2Z_\infty.
	\]
\end{cor}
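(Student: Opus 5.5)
The plan is to deduce Corollary~\ref{cor:derivative} from Theorem~\ref{thm:as} by applying the Cauchy integral formula on small circles centred at $z$. Fix $\theta\in(0,1)$, choose $\theta'\in(\theta,1)$, and set $r \coloneqq \frac{\theta'-\theta}{1+\theta'}\in(0,1)$. For $z=\beta+i\tau$ with $\abs{\tau}\le\theta(1-\beta)$, let $C_z$ be the circle of centre $z$ and radius $\rho \coloneqq r(1-\beta)$. Any $w=\beta'+i\tau'\in C_z$ satisfies $1-\beta'\ge (1-\beta)(1-r)$ and
\[
\abs{\tau'}\le \theta(1-\beta)+r(1-\beta)\le \theta'(1-\beta)(1-r)\le\theta'(1-\beta'),
\]
where the middle inequality is just the choice of $r$ (it reads $\theta+r\le\theta'(1-r)$). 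Hence the closed disc bounded by $C_z$ lies in the cone $\{\abs{\tau'}\le\theta'(1-\beta')\}$. For $\beta$ close to $1$ this cone region lies inside $\cP_1$, because near $1$ the set $\cP_1$ is the cone $\abs{\tau}<1-\beta$ and $\theta'<1$. Therefore $W_\infty$ is analytic on a neighbourhood of the disc, almost surely.

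Write $W_\infty(w)=2Z_\infty(1-w)+(1-w)\epsilon(w)$. Theorem~\ref{thm:as}, applied with $\theta'$, gives almost surely $\sup\abs{\epsilon(w)}\to0$ as $\beta'\nearrow1$, the supremum being over the cone with parameter $\theta'$. Since $\partial_z[2Z_\infty(1-z)]=-2Z_\infty$, Cauchy's formula gives
\[
\frac{\partial W_\infty(z)}{\partial z}+2Z_\infty=\frac{1}{2\pi i}\oint_{C_z}\frac{(1-w)\epsilon(w)}{(w-z)^2}\diff w .
\]
On $C_z$ we have $\abs{1-w}\le \abs{1-z}+\rho\le(1+\theta+r)(1-\beta)$, and $\abs{w-z}=\rho=r(1-\beta)$. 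So the modulus of the right-hand side is at most
\[
\frac{1}{2\pi}\cdot 2\pi\rho\cdot\frac{(1+\theta+r)(1-\beta)}{\rho^2}\,\sup_{C_z}\abs{\epsilon}=\frac{1+\theta+r}{r}\,\sup_{C_z}\abs{\epsilon}.
\]
Every $w\in C_z$ has $\beta'\ge 1-(1+r)(1-\beta)$, which tends to $1$ as $\beta\nearrow1$, uniformly in $z$ in the cone with parameter $\theta$. The bound therefore tends to $0$ uniformly, almost surely, and this proves the corollary.

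The only delicate step is the geometric one: checking that the Cauchy circles stay inside a slightly wider non-tangential cone and inside $\cP_1$. That is exactly why the corollary loses nothing relative to Theorem~\ref{thm:as}: the theorem holds for every $\theta'<1$, so one can always widen the cone slightly.
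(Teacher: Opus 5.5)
Your proposal is correct and follows the paper's route. The paper simply asserts that the corollary is a direct consequence of Theorem~\ref{thm:as} via the Cauchy integral formula for the derivative. Your argument supplies the details of that deduction: circles of radius $r(1-\beta)$ sit inside the wider cone with parameter $\theta'$, which lies in $\cP_1$ near $1$, and you then apply Theorem~\ref{thm:as} with $\theta'$.
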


Our method do not allow to prove the almost sure convergence when $z$ approaches 1 tangentially to the boundary of $\cP_1$, or even along the boundary $\cP_{1,2}$. However, as a byproduct of the proof of Theorem~\ref{thm:as}, we can prove at least the convergence in probability in that case.

\begin{prop}\label{prop:inprob}
	As $z \to 1$ with $z \in \cP_1 \cup \cP_{1,2}$, we have
	\begin{equation*}
	\frac{W_\infty(z)}{1-z}\xrightarrow[]{(\P)} 2Z_\infty.
	\end{equation*}
\end{prop}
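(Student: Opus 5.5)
The plan is to write $W_\infty(z)=W_t(z)+(W_\infty(z)-W_t(z))$ at a well-chosen time $t=t(z)$, with $t\asymp\abs{1-z}^{-2}$. The first term will produce $2(1-z)Z_\infty$, and the second will be shown to be $o_\P(\abs{1-z})$ by a conditional second-moment (or $p$-th moment, $p\in(1,2)$) estimate.

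\textbf{Step 1 (the $\nf_t$-measurable part).} Write $z=1-\epsilon$ with $\epsilon=\delta+i\eta$ and $\delta>0$. On $\cP_1\cup\cP_{1,2}$ near $1$ we have $\abs{\eta}\le\delta$. Then
\[
W_t(z)=\sum_{u\in\calN_t}e^{-X_u(t)}\,e^{\epsilon X_u(t)-\epsilon^2t/2}.
\]
Set $t=s/\abs{\epsilon}^2$ with $s$ a large fixed constant. Substituting $x=X_u(t)/\sqrt t$ gives
\[
W_t(z)=\sum_u X_u(t)e^{-X_u(t)}\, g_\epsilon\!\left(\frac{X_u(t)}{\sqrt t}\right),
\qquad g_\epsilon(x)=\frac{e^{\epsilon\sqrt t\,x-\epsilon^2t/2}}{\sqrt t\,x}.
\]
Hence $W_t(z)/\epsilon=Z_t(f_\epsilon)$ with $f_\epsilon(x)=e^{\omega\sqrt s\,x-\omega^2s/2}/(\omega\sqrt s\,x)$ and $\omega=\epsilon/\abs{\epsilon}$. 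The arguments of $\omega$ lie in a compact set $\abs{\arg\omega}\le\pi/4$. For these $\omega$ the functions $f$ are continuous on $(0,\infty)$, bounded by $C(1+x^{-1})$, and form an equicontinuous family.

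Applying \eqref{eq:cv_Z_t(f)}, uniformly over this compact family (via a finite net plus equicontinuity and a uniform tail bound for $Z_t(\abs{f})$), gives $W_t(z)/\epsilon-\rho(f_\omega)Z_\infty\to0$ in probability. A Gaussian integral computation then gives
\[
\rho(f_\omega)=\sqrt{\tfrac2\pi}\int_0^\infty \frac{x e^{\omega\sqrt s x-\omega^2s/2-x^2/2}}{\omega\sqrt s}\,\diff x
=\frac{\sqrt{2/\pi}}{\omega\sqrt s}\,\E\big[(G+\omega\sqrt s)\1_{\{\cdot\}}\big]\text{-type expression}\;\longrightarrow\;2
\quad\text{as } s\to\infty,
\]
uniformly in $\omega$. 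Concretely, completing the square turns it into $\sqrt{2/\pi}\int_{-\omega\sqrt s}^\infty (y+\omega\sqrt s)e^{-y^2/2}\diff y/(\omega\sqrt s)$. Since $\re(\omega)\ge 1/\sqrt2$, the contour can be moved and the limit is $\sqrt{2/\pi}\sqrt{2\pi}=2$. Choosing $s$ large therefore gives the main term.

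\textbf{Step 2 (the remainder).} By the branching property,
\[
W_\infty(z)-W_t(z)=\sum_{u\in\calN_t}e^{-zX_u(t)-(1-z)^2t/2}\big(W^{(u)}_\infty(z)-1\big),
\]
which is a sum of conditionally independent centered terms. Take $p\in(1,2]$ with $p=2$ when possible; by von Bahr–Esseen,
\[
\E\big[\abs{\cdot}^p\mid\nf_t\big]\le C_p\,\E\abs{W_\infty(z)-1}^p\sum_u e^{-p\beta X_u(t)-p\re((1-z)^2)t/2}.
\]
This step is the main obstacle: near $\cP_{1,2}$ and near $1$, the constant $\E\abs{W_\infty(z)-1}^p$ blows up, and the sum is a $p\beta$-type additive quantity that also needs control. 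I would proceed by truncation. On the event that $\min_{u\in\calN_t}X_u(t)\ge -A$ (or more precisely that the particles lie above a barrier $-A+\tfrac32\log$-type curve), which has probability close to $1$ uniformly in $t$, I would compute the truncated first moment via the many-to-one lemma with a Bessel-type change of measure, as in \cite{MaiPai2019}. This should give a bound $o(\abs{\epsilon}^p)$ after dividing by $\abs\epsilon^p$, provided $s$ is large and $p$ is taken close to $1$.

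For the constant $\E\abs{W_\infty(z)-1}^p$, uniform boundedness for $z$ in a neighborhood of $1$ intersected with $\cP_1\cup\cP_{1,2}$ fails for fixed $p$, so I would instead split at a further time. Alternatively, I would use a uniform estimate such as $\E\abs{W_\infty(z)-1}^p\le C\abs{1-z}^{-c(p-1)}$, obtained by iterating the same decomposition. That growth is absorbed by taking $p-1$ small relative to the gain from the barrier computation.

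Combining both steps, then letting $t\to\infty$ and then $s\to\infty$, yields $W_\infty(z)/(1-z)\to2Z_\infty$ in probability.
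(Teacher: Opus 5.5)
Your Step~1 is essentially right: the $\nf_t$-measurable term is $W_t(z)/(1-z)=Z_t(f_\omega)$, and $\rho(f_\omega)=2+O(s^{-1/2})$ uniformly in $\abs{\arg\omega}\le\pi/4$. There is one citation problem. Since $f_\omega$ grows like $e^{\re(\omega)\sqrt{s}\,x}/x$, it is not bounded by $C(1+x^{-1})$, so \eqref{eq:cv_Z_t(f)} does not apply as stated. You need the version of \cite{MaiPai2021} for exponentially growing test functions, after splitting off the $1/x$ part, which is a multiple of $\sqrt{t}W_t(1)$.

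The genuine gap is Step~2. It sits exactly where the proposition goes beyond Theorem~\ref{thm:as}, namely on $\cP_{1,2}$ and for tangential approach. On $\cP_{1,2}$ one has $\re((1-z)^2)=0$, so nothing damps the von Bahr--Esseen bound $\E\abs{W_\infty(z)-1}^p\sum_{u\in\calN_t}e^{-p\beta X_u(t)}$. Write $\delta=1-\beta$ and $p\beta=1-\kappa$; the following is an order-of-magnitude computation.

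\emph{The moment factor.} A particle reaching $-y$ does so with probability of order $e^{-y}$. It contributes roughly $\abs{e^{zy-(1-z)^2T/2}}\,\abs{W^{(v)}_\infty(z)}\approx e^{\beta y}\delta$, so $\P(\abs{W_\infty(z)}>x)$ is at least of order $(\delta/x)^{1/\beta}$. Hence $\E\abs{W_\infty(z)}^p=\infty$ for $p\ge1/\beta$, and for $0<\kappa<\delta$ one gets $\E\abs{W_\infty(z)-1}^p\gtrsim\delta/\kappa$.

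\emph{The sum factor.} One has $\sum_u e^{-(1-\kappa)X_u(t)}=t^{-1/2}Z_t(x\mapsto e^{ax}/x)$ with $a=\kappa\sqrt t$, which is of order $t^{-1/2}Z_\infty(1\vee ae^{a^2/2})$.

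\emph{The product.} Take $t=s/\abs{1-z}^2$ and $\abs{1-z}=\sqrt2\,\delta$. Then the bound divided by $\abs{1-z}^p$ is at least of order $Z_\infty(1\vee ae^{a^2/2})/a\gtrsim Z_\infty$, whatever $p\in(1,2]$ and $s$ are. The parameter $s$ drops out, and the remainder is never shown to be $o(\abs{1-z})$.

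Your barrier on $\min_u X_u(t)$ cannot fix this. It only constrains $\nf_t$-measurable positions, and the sum is carried by particles at height of order $\kappa t\vee\sqrt t$, not low ones. The heavy tail of $W^{(u)}_\infty(z)$ comes instead from descendants going low \emph{after} time $t$. For the same reason, the uniform bound $\E\abs{W_\infty(z)-1}^p\le C\abs{1-z}^{-c(p-1)}$ you invoke fails as $p\uparrow1/\beta$.

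The idea you are missing is to localise on a high-probability event that forbids exactly this mechanism. Take $G_K=E_{[0,K],-K}\cap E_{[K,\infty),0}$, which satisfies $\P(G_K)\to1$. On $G_K$, for $t\ge K$, one has $W_s(z)=\widetilde W^{t,0}_s(z)$ for all $s\ge t$. For the killed martingale:
\begin{itemize}
\item its conditional variance given $\nf_t$ is controlled by Lemma~\ref{lem:second_moment_0} uniformly for $\abs{\tau}\le1-\beta$, boundary included;
\item its conditional mean is $(1-z)Z_t(F_w,0)$, where $F_w$ differs from your $f_\omega$ (at $w=\omega\sqrt s$) by the reflection term $-e^{-w^2/2}e^{-wx}/(wx)$ produced by the killing, and $\rho(F_w)=2$ exactly.
\end{itemize}
This is Lemma~\ref{lem:concentration_W_infty}. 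Together with Lemmas~\ref{lem:concentration_Z_t(F_w)} and~\ref{lem:concentration_Z_infty}, it gives Corollary~\ref{cor:concentration_for_as_cv}. Taking $t=(1-\beta)^{-2}$ and $w=1-i\tau/(1-\beta)$ there yields $\E[\1_{G_K}\abs{W_\infty(z)/(1-z)-2Z_\infty}]\le CK(1-\beta)^{1/2}$ for all $z\in\cP_1\cup\cP_{1,2}$ close to $1$. The proposition then follows from Markov's inequality and $\P(G_K)\to1$.

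Your decomposition could plausibly work for non-tangential approach, where $e^{-p\re((1-z)^2)t/2}$ decays in $s$. Even there it relies on a uniform $L^{p(z)}$ bound on $W_\infty(z)$ that you would still have to prove, and it does not reach $\cP_{1,2}$.
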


Furthermore, we believe the almost sure convergence should still hold and leave it as a conjecture.

\begin{conj} \label{conj:as}
	As $z \to 1$ with $z \in \cP_1 \cup \cP_{1,2}$, we have
	\begin{equation*}
	\frac{W_\infty(z)}{1-z}\xrightarrow[]{(\textup{a.s.})} 2Z_\infty.
	\end{equation*}
\end{conj}

\paragraph{Fluctuations.}

Our second main result describes the fluctuations occuring in this convergence and is stated in terms of $M_{Z_\infty}$, which is defined in \cite[Eq.~(1.7)]{MaiPai2021} as a spectrally positive 1-stable noise (conditionally on $Z_\infty$) with some additional drift. 
Although such a random noise is not a random measure, one can integrate functions against it. 
We refer to \cite{MaiPai2021} for its precise definition in terms of stable integrals, and more generally to \cite[Chapter 3]{SamTaq1994} for the general theory.
We only state here the key properties of $M_{Z_\infty}$ which are sufficient to understand the statement of Theorem~\ref{thm:fluctu} below. 
Define the linear space of functions
\begin{equation} \label{eq:cG}
	\cG \coloneqq \left\{g \colon [0,\infty)\to \R\text{ measurable:} \int_0^\infty \abs{g(r)}(1+\abs{\log\abs{g(r)}}) \frac{\diff r}{r^{3/2}} < \infty \right\},
\end{equation}
where we use throughout the convention $0\log 0 = 0$.
We also introduce the constant $\mu_Z$ defined by
\begin{equation} \label{eq:mu_Z}
\mu_Z \coloneqq \lim_{x\to\infty} \E[Z_\infty\boldsymbol 1_{Z_\infty \le x}] - \log x - \gamma + 1,
\end{equation}
where $\gamma$ is the Euler-Mascheroni constant.
Then, $\int_0^\infty g(r) M_{Z_\infty}(\diff r)$ defines a real valued random variable for any $g \in \cG$, and these random variables satisfy
\begin{align} 
	& \E \left[ \exp \left( i \int_0^\infty g(r) M_{Z_\infty}(\diff r) \right) \middle| Z_\infty \right] \nonumber \\
	& = \exp \left( - \int_0^\infty 
	\left[ \abs{g(r)} + i g(r) \frac{2}{\pi} (\log \abs{g(r)} - \mu_Z) \right] 
	Z_\infty \frac{\sqrt{\pi}}{2\sqrt{2}} \frac{\diff r}{r^{3/2}}\right)
	\label{eq:characteristic_function_M}
\end{align}	
and for any $n \geq 1$, $a_1,\dots,a_n \in \R$ and $g_1,\dots,g_n \in \cG$,
\begin{equation} \label{eq:linearity} 
	\int_0^\infty \left( \sum_{k=1}^n a_k g_k(r) \right) M_{Z_\infty}(\diff r) 
	= \sum_{k=1}^n a_k \int_0^\infty g_k(r) M_{Z_\infty}(\diff r), 
	\qquad \text{almost surely.}
\end{equation}	
Note that these two properties combined uniquely define the joint distribution of $\int_0^\infty g_k(r) M_{Z_\infty}(\diff r)$ for $k = 1,\dots,n$.
Finally, we extend this definition to the case of measurable complex-valued functions $g \colon  [0,\infty)\to \C$ such that $\abs{g} \in \cG$ (or equivalently $\re g \in \cG$ and $\im g \in \cG$), by setting
\[
	\int_0^\infty g(r) M_{Z_\infty}(\diff r)
	\coloneqq \int_0^\infty \re g(r) M_{Z_\infty}(\diff r)
	+ i \int_0^\infty \im g(r) M_{Z_\infty}(\diff r).
\]
We can now state the fluctuation result. It holds for $z$ approaching $1$ along a straight line included in $\cP_1 \cup \cP_{1,2}$ (note that here the boundary $\cP_{1,2}$ is included).
Moreover, the result is stated as a functional convergence. 

\begin{thm} \label{thm:fluctu}
	Let $\cW = \{ w = a+ib \in \C : a > 0, \abs{b} \leq a \}$.
	The following convergence holds in the sense of finite-dimensional distributions, with $z = 1 - \frac{w}{\sqrt{t}}$,
	\begin{equation} \label{eq:fluctu}
	\left( \sqrt{t} \left( \frac{W_\infty(z)}{1-z} - 2 Z_\infty + \frac{w \log t}{\sqrt{t}} Z_\infty \right) \right)_{w \in \cW} 
	\xrightarrow[t \to \infty]{} 
	\left( \int_0^\infty 2 \left( e^{-r w^2/2} -1 \right) M_{Z_\infty}(\diff r) \right)_{w \in \cW}.
	\end{equation}
\end{thm}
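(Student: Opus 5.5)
We reduce Theorem~\ref{thm:fluctu} to the fluctuation result \eqref{eq:fluctu_Z_t(f)} of \cite{MaiPai2021} by conditioning on $\nf_t$ at the time $t$ matched with $z=1-w/\sqrt{t}$. The branching property gives, for $z\in\cP_1\cup\cP_{1,2}$,
\[
W_\infty(z)=\sum_{u\in\calN_t} e^{-zX_u(t)-(1-z)^2t/2}\, W^{(u)}_\infty(z),
\]
where the $W^{(u)}_\infty(z)$ are i.i.d.\ copies of $W_\infty(z)$, independent of $\nf_t$. Here $(1-z)^2t/2=w^2/2$ and $e^{-zX_u(t)}=e^{-X_u(t)}e^{wX_u(t)/\sqrt t}$. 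So we write $W^{(u)}_\infty(z)=(1-z)\bigl(2Z^{(u)}_\infty+R^{(u)}(z)\bigr)$, where $R^{(u)}(z)\to0$ by Proposition~\ref{prop:inprob}, and split
\[
\frac{W_\infty(z)}{1-z}=2e^{-w^2/2}\sum_{u}e^{-X_u(t)}e^{wX_u(t)/\sqrt t}Z^{(u)}_\infty+\sum_{u} e^{-X_u(t)}e^{wX_u(t)/\sqrt t-w^2/2}R^{(u)}(z).
\]
For the main term, use the decomposition $Z_\infty=\sum_u e^{-X_u(t)}\bigl(Z^{(u)}_\infty+X_u(t)W^{(u)}_\infty(1)\bigr)$ with $W^{(u)}_\infty(1)=0$, so $Z_\infty=\sum_u e^{-X_u(t)}Z^{(u)}_\infty$. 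The first sum is then a ``weighted'' version of $Z_\infty$, with weights $e^{wX_u(t)/\sqrt t}$.

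\textbf{Step 1 (main term).} I would prove a variant of \eqref{eq:fluctu_Z_t(f)}: the joint law of $\sqrt t\bigl(\sum_u e^{-X_u(t)}h(X_u(t)/\sqrt t)Z^{(u)}_\infty-\rho'(h)Z_\infty-\dots\bigr)$ for finitely many test functions $h_w(x)=2e^{-w^2/2}e^{wx}$. The idea is to redo the argument of \cite{MaiPai2019,MaiPai2021}. Conditionally on $\nf_t$, the sum of independent $Z^{(u)}_\infty$ with heavy tail $\P(Z_\infty>x)\sim 1/x$ produces a $1$-stable limit, with scale $\sum_u e^{-X_u(t)}|h|$. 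The centering $\E[Z_\infty\1_{Z_\infty\le x}]\approx\log x+\mu_Z+\gamma-1$ generates the $\log t$ drift and the $\mu_Z$ shift in \eqref{eq:characteristic_function_M}. The empirical measure $\sum_u X_u(t)e^{-X_u(t)}\delta_{X_u(t)/\sqrt t}$ converges by \eqref{eq:cv_Z_t(f)} to $Z_\infty$ times the Rayleigh-type density $\sqrt{2/\pi}x^2e^{-x^2/2}$. The heavy-tail events then come from particles at height $O(1)$ above the minimum, weighted by the density of last visits; this should produce the $r^{-3/2}\diff r$ intensity. Here $r$ is the time in the $\sqrt t$-scale at which the relevant subtree branches, and the associated Gaussian factor $\int e^{wx}\cdot(\text{Brownian kernel})$ gives $e^{-rw^2/2}$, hence the integrand $2(e^{-rw^2/2}-1)$. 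Concretely, the plan is to follow the same decomposition: condition at an intermediate time $s=\epsilon t$, and invoke \cite{MaiPai2021} for the fluctuations of $Z_t(f)$ with $f$ continuous in the relevant class. The Gaussian identity $\int_0^\infty\sqrt{2/\pi}x^2e^{-x^2/2}\cdot 2e^{wx-w^2/2}\diff x/x\cdot$(appropriate normalization) $=2$ checks that the leading constant is $2Z_\infty$.

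\textbf{Step 2 (remainder).} We must show $\sqrt t\sum_u e^{-X_u(t)}e^{wX_u(t)/\sqrt t-w^2/2}R^{(u)}(z)\to0$ in probability, after absorbing the deterministic part. This requires a quantitative bound such as $\E\bigl[|R(z)|\wedge\cdot\bigr]=O(|1-z|\log)$ together with truncated first- and second-moment control. The tail contributions of $R^{(u)}$ are at scale $1/\sqrt t$, which matches the stable scale. So this remainder is not really negligible: it has to be combined with the main term, replacing $Z^{(u)}$ by the whole $W^{(u)}_\infty(z)/(1-z)$.

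\textbf{Main obstacle.} I therefore expect the key difficulty to be the tail estimate $\P\bigl(|W_\infty(z)|/|1-z|>x\bigr)\sim \P(2Z_\infty>x)$ uniformly for $z$ near $1$ in the cone, including on the boundary $\cP_{1,2}$, together with the truncated mean asymptotics. I would try first to obtain these from the tail of $Z_\infty$ via the many-to-one lemma and a spine decomposition.
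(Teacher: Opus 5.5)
There is a genuine gap, and it sits in the split $W^{(u)}_\infty(z)=(1-z)\bigl(2Z^{(u)}_\infty+R^{(u)}(z)\bigr)$ made at time $t$. Your main term does not converge to $2Z_\infty$. Since $\E[Z_\infty\1_{\{Z_\infty\le y\}}]\approx\log y$, and the summand attached to $u$ is effectively truncated at the natural level $y\approx e^{X_u(t)}$ (equivalently, its subtree is killed at level $0$), the sum $2e^{-w^2/2}\sum_u e^{-X_u(t)}e^{wX_u(t)/\sqrt t}Z^{(u)}_\infty$ behaves like $Z_t(h)$ with $h(x)=2e^{-w^2/2}e^{wx}$. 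It therefore tends to $\rho(h)Z_\infty$. For real $w>0$ one has $e^{wx}>\sinh(wx)/(wx)$ on $(0,\infty)$, so $\rho(h)>\rho(F_w)=2$ by Lemma~\ref{lem:rho(F_w)}; your normalization check does not give $2$. Consequently the remainder $\sum_u e^{-X_u(t)}e^{wX_u(t)/\sqrt t-w^2/2}R^{(u)}(z)$ is of order one, not of the fluctuation order $1/\sqrt t$.

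The underlying reason is that the relevant levels satisfy $\log y\asymp\sqrt t\asymp|1-z|^{-1}$, and there the law of $W_\infty(z)/(1-z)$ differs from that of $2Z_\infty$ at first order. By \eqref{eq:first_moment_0}--\eqref{eq:CV_G}, a subtree killed $m$ below its starting point gives mean $(1-e^{-2(1-z)m})/(1-z)$ for $W_\infty(z)/(1-z)$, against $2m$ for $2Z_\infty$. For the same reason, the estimate you single out as the main obstacle is false. For fixed $z\in\cP_1$, $W_\infty(z)$ is bounded in $L^p$ for some $p>1$ \cite{Big1992}, so $\P(|W_\infty(z)|>x)=o(1/x)$ and no uniformity in $z$ near $1$ is possible. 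Moreover, the discrepancy becomes of order one exactly in the regime $\log x\asymp|1-z|^{-1}$ that you need. What your route really requires is the truncated-mean behaviour of the $t$-dependent complex law of $W_\infty(z)/(1-z)$ at these scales, precise enough to leave an $o(1/\sqrt t)$ error after summation. That is essentially the theorem itself.

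The idea you are missing is to condition at time $At$ with $A$ large, rather than at time $t$, and to impose a barrier at $\gamma=\frac12\log t$ after time $At$. On $E_{[0,At],-K}\cap E_{[At,\infty),\gamma}$, the post-$At$ contributions are replaced by their killed conditional mean. By \eqref{eq:CV_G} this mean is exactly $(1-z)e^{(1-z)\gamma}Z_{At}(F_{\sqrt Aw},\gamma)$ with $F_w(x)=2e^{-w^2/2}\sinh(wx)/(wx)$. The killing is what turns your $e^{wx}$ into $\sinh(wx)/(wx)$ and yields $\rho(F_w)=2$. The second-moment bound under the barrier (Lemma~\ref{lem:second_moment_0}) makes the replacement error $O(K^{1/2}A^{-1/4}t^{-1/2})$ in $L^1$ on the barrier event (Lemma~\ref{lem:concentration_W_infty}). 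This is negligible at the fluctuation scale only as $A\to\infty$. With $A=1$, i.e.\ your conditioning time, it is of the same order as the fluctuations, which is precisely why you are pushed into the fine law of $W_\infty(z)$.

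With this reduction, no information on the law of $W_\infty(z)$ is needed. The finite-time functional $Z_{At}(F_{\sqrt Aw},\gamma)$ is handled directly by \cite[Theorem~1.2 and Proposition~6.4]{MaiPai2021}, with limit $\int_0^\infty 2\bigl(e^{-rw^2/2}\1_{\{r<A\}}-1\bigr)M_{Z_\infty}(\diff r)$, and one finally lets $A\to\infty$. The $\log t$ drift comes from $e^{(1-z)\gamma}-1=\frac{w\log t}{2\sqrt t}+O\bigl(\frac{\log^2 t}{t}\bigr)$ multiplied by $Z_{At}(F_{\sqrt Aw},\gamma)\approx 2Z_\infty$. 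This gives the centering $-\frac{w\log t}{\sqrt t}Z_\infty$, consistent with Corollary~\ref{cor:fluctu}; the $+$ sign in the displayed statement appears to be a slip.
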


\begin{rem} \label{rem:weak_cv_in_proba}
	A stronger statement, similar to the one of \cite[Theorem~1.2]{MaiPai2021}, actually holds: as $t \to \infty$ and then $\varepsilon \searrow 0$, the finite-dimensional conditional distributions of the left-hand side of \eqref{eq:fluctu} given $\mathcal{F}_{\varepsilon t}$ converge weakly in probability towards the finite-dimensional conditional distributions of the right-hand side of \eqref{eq:fluctu} given $Z_\infty$. See \cite[Appendix A]{MaiPai2019} for a brief introduction to weak convergence in probability. 
	To avoid a heavier setting, we restrict ourselves to the weaker statement in Theorem~\ref{thm:fluctu}, but the stronger form follows from the same steps.
\end{rem}

\begin{rem}
	Our approach also identifies the particles responsible for the fluctuations: these are the ones reaching a level $\frac{1}{2} \log t + O(1)$ at a time of order $t$. This means that if the contribution of the descendants of these particles to $W_\infty(z)/(1-z) - 2 Z_\infty$ is removed, then no fluctuations appear at scale $1/\sqrt{t}$. This observation is a consequence of our proof which shows that, up to a negligible error term, $W_\infty(z)/(1-z)$ can be replaced by a quantity of the type $Z_{At}(f)$ with $A$ large but fixed, together with the fact that fluctuations of $Z_{At}(f)$ are due to the particles mentioned above by \cite[Remark~1.6]{MaiPai2021}.
\end{rem}

Restricting ourselves to a one-dimensional convergence and to the case where $z = \beta$ approaches $1$ along the real axis, we deduce the following corollary.
Following~\cite{SamTaq1994}, we define the $1$-stable distribution $\cS_1(\sigma,\beta,\mu)$, with scale parameter $\sigma > 0$, skewness $\beta\in[-1,1]$ and shift parameter $\mu \in \R$, as the distribution on $\R$ having characteristic function 
$\lambda \mapsto \exp( -\sigma (\abs{\lambda} + i \beta \frac{2}{\pi} \lambda \log \abs{\lambda}) + i \mu \lambda)$. 

\begin{cor} \label{cor:fluctu}
	As $\beta \nearrow 1$, 
	\[
	\frac{1}{1-\beta}\left( \frac{W_\infty(\beta)}{1-\beta} - 2 Z_\infty +2(1-\beta)\log(1-\beta) Z_\infty\right)
	\]
	converges in distribution to some non-degenerate limit which is, conditionally on $Z_\infty$, a spectrally negative 1-stable random variable distributed as $\cS_1(\pi Z_\infty, -1, \mu_0 Z_\infty)$ with
	\[
	\mu_0 \coloneqq 2 \Biggl( \log 2 - \mu_Z - \sum_{k\ge 1}\frac{1}{k(\sqrt{k+1}+\sqrt{k})} \Biggr),
	\]
	recalling that $\mu_Z$ is defined in \eqref{eq:mu_Z}.
\end{cor}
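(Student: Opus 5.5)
The plan is to specialise Theorem~\ref{thm:fluctu} to the single point $w=1\in\cW$ and then compute the conditional characteristic function of the limit from \eqref{eq:characteristic_function_M}.

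\textbf{Reparametrisation.} With $w=1$ we have $z=\beta=1-t^{-1/2}$. Hence $t=(1-\beta)^{-2}$, $\sqrt t = 1/(1-\beta)$ and $\log t=-2\log(1-\beta)$. Letting $t\to\infty$ is the same as letting $\beta\nearrow1$, and the left-hand side of \eqref{eq:fluctu} becomes $\frac{1}{1-\beta}\bigl(\frac{W_\infty(\beta)}{1-\beta}-2Z_\infty+(1-\beta)\log t\,Z_\infty\bigr)$. I would write out this substitution explicitly and compare it with the centring in the corollary, whose logarithmic term is $+2(1-\beta)\log(1-\beta)Z_\infty=-(1-\beta)\log t\,Z_\infty$. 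The two centrings have the same magnitude but opposite sign, so the sign must be reconciled with the convention of Theorem~\ref{thm:fluctu} before anything else. Once this is done, the one-dimensional marginal of Theorem~\ref{thm:fluctu} gives convergence in distribution to $S\coloneqq\int_0^\infty 2(e^{-r/2}-1)\,M_{Z_\infty}(\diff r)$. This is well defined because $h(r)\coloneqq 2(1-e^{-r/2})$ lies in $\cG$: $h(r)\sim r$ near $0$ and $h$ is bounded at infinity.

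\textbf{Characteristic function.} For $\lambda\in\R$ I apply \eqref{eq:characteristic_function_M} to $g=-\lambda h$, so that $\abs g=\abs\lambda h$ and $\log\abs g=\log\abs\lambda+\log h$. Write $c=\sqrt\pi/(2\sqrt2)$.
\begin{itemize}
\item \emph{Scale.} From $\int_0^\infty(1-e^{-ar})r^{-3/2}\diff r=2\sqrt{\pi a}$ with $a=1/2$ we get $c\int h\,r^{-3/2}\diff r=c\cdot2\sqrt{2\pi}=\pi$. The $\abs g$ term therefore contributes $-\pi Z_\infty\abs\lambda$.
\item \emph{Skewness.} The $\log\abs\lambda$ part of the imaginary term gives $+i\pi Z_\infty\frac2\pi\lambda\log\abs\lambda$. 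This is skewness $-1$, so $S$ is spectrally negative, with $\sigma=\pi Z_\infty$.
\item \emph{Shift.} The remaining terms give $i\lambda\mu$ with
\[
\mu=Z_\infty\frac{2c}{\pi}\int_0^\infty h(r)\bigl(\log h(r)-\mu_Z\bigr)\frac{\diff r}{r^{3/2}}=\frac{Z_\infty}{\sqrt{2\pi}}\int_0^\infty h\,(\log h-\mu_Z)\frac{\diff r}{r^{3/2}}.
\]
\end{itemize}

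\textbf{Evaluating the shift.} Use $\log h=\log2+\log(1-e^{-r/2})=\log 2-\sum_{k\ge1}e^{-kr/2}/k$. Every term of the series has a fixed sign, so Tonelli justifies exchanging sum and integral. Each term gives
\[
\int_0^\infty(1-e^{-r/2})e^{-kr/2}r^{-3/2}\diff r=\sqrt{2\pi}\bigl(\sqrt{k+1}-\sqrt k\bigr)=\frac{\sqrt{2\pi}}{\sqrt{k+1}+\sqrt k}.
\]
Combining this with $\int h\,r^{-3/2}\diff r=2\sqrt{2\pi}$ yields $\mu=\mu_0Z_\infty$, with $\mu_0$ exactly as stated. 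The series converges since its terms are of order $k^{-3/2}$.

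Non-degeneracy follows because $Z_\infty>0$ on the survival event. The main obstacles are the sign bookkeeping in the reparametrisation and the careful, Tonelli-justified evaluation of the shift integral. The rest is a direct application of Theorem~\ref{thm:fluctu}.
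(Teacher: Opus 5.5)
Your route is exactly the paper's: specialise Theorem~\ref{thm:fluctu} to $w=1$, then read the parameters off \eqref{eq:characteristic_function_M} using the same Tonelli expansion of $\log(1-e^{-r/2})$. Your values $\sigma=\pi Z_\infty$, skewness $-1$ and shift $\mu_0 Z_\infty$ are all correct. The one real gap is the step you flag and then skip (``the sign must be reconciled \dots{} Once this is done''), and it is not cosmetic. With $w=1$, the quantity in the corollary equals the left-hand side of \eqref{eq:fluctu} plus $-2\log t\,Z_\infty=4\log(1-\beta)Z_\infty$, which tends to $-\infty$ on the survival event. So the corollary does not follow from Theorem~\ref{thm:fluctu} as printed. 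The two printed statements are incompatible, and you must decide which sign is right rather than postulate a reconciliation.

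The answer comes from the proof of Theorem~\ref{thm:fluctu}, with $\gamma=\frac12\log t$:
\begin{itemize}
\item By Corollary~\ref{cor:concentration_for_fluctu}, $W_\infty(z)/(1-z)=e^{(1-z)\gamma}Z_{At}(F_{\sqrt A w},\gamma)+o_\P(t^{-1/2})$, letting $t\to\infty$ and then $A\to\infty$.
\item By \eqref{eq:combined}, $\sqrt t\,(Z_{At}(F_{\sqrt A w},\gamma)-2Z_\infty)$ converges in law with no logarithmic correction.
\item $e^{(1-z)\gamma}-1=\frac{w\log t}{2\sqrt t}+O((\log t)^2/t)$.
\end{itemize}
Therefore
\[
\sqrt t\Bigl((e^{(1-z)\gamma}-1)Z_{At}(F_{\sqrt A w},\gamma)-\frac{w\log t}{\sqrt t}Z_\infty\Bigr)=\frac{w\log t}{2}\bigl(Z_{At}(F_{\sqrt A w},\gamma)-2Z_\infty\bigr)+o_\P(1)\xrightarrow[t\to\infty]{(\P)}0.
\]
So the correct centring in \eqref{eq:fluctu} is $-\frac{w\log t}{\sqrt t}Z_\infty$. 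The ``$+$'' in the statement is a typo, as is the expansion $e^{(1-z)\gamma}-1=-\frac{w\log t}{\sqrt t}+o(t^{-1/2})$ in its proof.

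For $w=1$ one has $-\frac{\log t}{\sqrt t}Z_\infty=2(1-\beta)\log(1-\beta)Z_\infty$, which is exactly the corollary's centring. This is also consistent with the subsequent remark that $(W_\infty(\beta)-2(1-\beta)Z_\infty)/(1-\beta)^2\to+\infty$. With this inserted your proof is complete. The paper's own proof simply applies the theorem with the corrected sign without comment.
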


\begin{rem}
	It follows that
	\[
		\frac{W_\infty(\beta) -(1-\beta) 2Z_\infty}{(1-\beta)^2} 
		\xrightarrow[\beta \nearrow 1]{(\P)} \infty,
	\]
	which implies that a.s.\ $\beta \mapsto W_\infty(\beta)$ is not twice left-differentiable at $1$.
	In particular, a.s.\ $\partial^2 W_\infty(\beta)/(\partial \beta)^2$ does not converge to a finite limit as $\beta \nearrow 1$.
\end{rem}

\subsection{Related literature and further questions}
\label{sec:related_literature}

\paragraph{Complex additive martingales.}

The idea of studying complex versions of the additive martingales emerged at the same time as the study of the real versions, in order to understand the growth of the bulk of particles in a given direction.
The real additive martingale appears when tilting exponentially the empirical measure so that it is mainly supported in the considered direction, then considering the Fourier transform of this tilted empirical measure makes appear the complex additive martingales. 
This idea was used without tilting in \cite{Wat1968} and then together with the tilting in \cite{Uch1982} for continuous-time BRW and in \cite{Big1992} for discrete-time BRW and more general continuous-time branching processes including the BBM. 
These last two papers establish the $L^p$-convergence of $(W_t(z))_{t\geq0}$ when $z \in \cP_1$ for $p>1$ depending on $z$ (we do not give the definition of $\cP_1$ in this general context here).
See also \cite{IksLiaLiu2019} for necessary or sufficient conditions on $p$ for the $L^p$-convergence.
As mentioned earlier in the introduction, Biggins \cite{Big1992} additionally proved the a.s.\ uniform convergence on compact subsets of $\cP_1$, making use of a contour integral argument which inspired our proof of Theorem~\ref{thm:as}.
This type of uniform convergence had been previously obtained by Joffe, Le Cam and Neveu \cite{JofLeCNev1973} with a very different argument%
\footnote{We find it worth mentioning here: they can compute explicitly the covariances of the limits $W_\infty(\beta)$, then the continuity of the limit follows via Kolmogorov's criterion and finally the uniform convergence can be deduced from the continuity of the limit by a general result on martingales in Banach spaces. Note the difference with \cite{Big1992}, where the regularity of the limit is deduced from the uniform convergence.}
which works in their special case of a binary BRW with Bernoulli($1/2$) jumps, in which the additive martingale converges in $L^2$ for any $\beta \in \R$.

Another reason behind the study of complex additive martingale came from statistical physics. Motivated by directed polymers.
Derrida, Evans and Speer \cite{DerEvaSpe1993} studied the free energy $\lim_{t\to\infty} \frac{1}{t} \log W_t(z)$ for the BRW in a different setting than the one presented so far, which, in the BBM case, corresponds to considering
\begin{equation} \label{eq:general_W_t(z)}
W_t(z) \coloneqq 
\sum_{u\in \calN_t}  e^{-[\beta X_u(t) + i\tau Y_u(t)]-\psi(\beta,\tau) t}, \qquad t \geq 0, 
\quad z = \beta + i \tau \in \C,
\end{equation}
where $(X,Y) = ((X_u(t),Y_u(t)),u\in\calN_t,t\geq 0)$ is a two-dimensional BBM and $\psi(\beta,\tau)$ is chosen so that $\E[W_t(z)] = 1$. 
They proved that, in addition to the second phase already appearing with a real parameter (recall the phase transition at $\beta = 1$), there is a third phase appearing only in the complex plane, thus confirming predictions of \cite{CooDer1990}.
The three phases are $\cP_1$, defined in \eqref{eq:def_phases1},  
\begin{align*}
\cP_2 \coloneqq 
\left\{ \beta + i \tau \in \C : \abs{\beta} > \frac12, \abs{\tau} > 1 - \abs{\beta} \right\} 
\quad \text{and} \quad
\cP_3 \coloneqq 
\left\{ \beta+i\tau \in \C : \abs{\beta} < \frac12, \tau^2 + \beta^2 > \frac{1}{2} \right\}, 
\end{align*}
see Figure~\ref{fig-phases}. Whereas the additive martingale converges to a non trivial limit in $\cP_1$, it converges exponentially fast to 0 in $\cP_2$ and diverges exponentially fast (in an oscillatory manner) in $\cP_3$.
\begin{figure}[htbp]
	\centering
	\begin{tikzpicture}[scale=3, >=latex]
	\def\rad{0.707106781}  
	
	\fill[Yellow, fill opacity=0.5, draw=none]
	(1,0) -- (0.5,0.5) 
	arc[start angle=45, end angle=135, radius=\rad] 
	-- (-0.5,0.5) -- (-1,0) 
	-- (-0.5,-0.5) 
	arc[start angle=-135, end angle=-45, radius=\rad] 
	-- (0.5,-0.5) -- cycle;
	
	\fill[Red, fill opacity=0.5, draw=none]
	(0.5,0.5) 
	arc[start angle=45, end angle=135, radius=\rad] 
	-- (-.5,1.1) -- (.5,1.1) -- cycle;
	\fill[Red, fill opacity=0.5, draw=none]
	(-0.5,-0.5) 
	arc[start angle=-135, end angle=-45, radius=\rad] 
	-- (0.5,-1) -- (-.5,-1) -- cycle;
	
	\fill[RoyalBlue, fill opacity=0.5, draw=none]
	(0.5,0.5) -- (1,0) -- (0.5,-0.5) -- (0.5,-1) -- (1.3,-1) -- (1.3,1.1) -- (.5,1.1) -- cycle; 
	\fill[RoyalBlue, fill opacity=0.5, draw=none]
	(-0.5,0.5) -- (-1,0) -- (-0.5,-0.5) -- (-0.5,-1) -- (-1.3,-1) -- (-1.3,1.1) -- (-.5,1.1) -- cycle; 
	
	\draw[->] (-1.3,0) -- (1.3,0); 
	\draw[->] (0,-1) -- (0,1.1); 
	
	\draw[dashed, Black!70] (-0.5,0.5) arc[start angle=135, end angle=225, radius=\rad];
	\draw[dashed, Black!70] (0.5,-0.5) arc[start angle=-45, end angle=45, radius=\rad];
	\draw[dashed, Black!70] (0.5,0.5) -- (0.5,-0.5);
	\draw[dashed, Black!70] (-0.5,0.5) -- (-0.5,-0.5);
	
	\draw[BurntOrange, very thick] (0.5,0.5) arc[start angle=45, end angle=135, radius=\rad];
	\draw[BurntOrange, very thick] (0.5,-0.5) arc[start angle=-45, end angle=-135, radius=\rad];
	
	\draw[ForestGreen, very thick] 
	(1,0) -- (0.5,0.5)   
	(0.5,-0.5) -- (1,0)  
	(-1,0) -- (-0.5,0.5) 
	(-0.5,-0.5) -- (-1,0); 
	
	\draw[VioletRed, very thick] 
	(.5,1.1) -- (0.5,0.5)   
	(0.5,-0.5) -- (.5,-1)  
	(-.5,1.1) -- (-0.5,0.5) 
	(-0.5,-0.5) -- (-.5,-1); 
	
	\fill (1,0) circle (0.5pt);
	\fill (-1,0) circle (0.5pt);
	\fill[Black] (0.5,0.5) circle (0.5pt);
	\fill[Black] (0.5,-0.5) circle (0.5pt);
	\fill[Black] (-0.5,0.5) circle (0.5pt);
	\fill[Black] (-0.5,-0.5) circle (0.5pt);
	
	\node at (0.54,0.015) [below] {$\frac12$};
	\node at (1.02,0) [below] {$1$};
	
	\node at (-.25,.25) {\Large $\cP_1$};
	\node at (.9,.5) {\Large $\cP_2$};
	\node at (.25,.85) {\Large $\cP_3$};
	\end{tikzpicture}
	\caption{The three phases $\cP_1$, $\cP_2$ and $\cP_3$ are the open sets pictured in yellow, blue and red respectively. The boundaries $\cP_{1,2}$, $\cP_{1,3}$ and $\cP_{2,3}$ are the lines in green, orange and violet respectively. The black dots are excluded from all these sets.}
	\label{fig-phases}
\end{figure}

Complex additive martingales are also an important tool for the construction of complex multiplicative cascades, random complex-valued measures with multifractal properties obtained by exponentiating a BRW \cite{BarJinMan2010a,BarJin2010}.
These cascades can also be considered as a toy model for Gaussian multiplicative chaos, which we will discuss in more detail later.

Recently, more precise results have been obtained on the phases $\cP_2$ and $\cP_3$ as well as on their boundaries: we already introduced $\cP_{1,2}$ in \eqref{eq:def_phases1} and we write $\cP_{1,3}$ for the intersection of the boundaries of $\cP_1$ and $\cP_3$ (without the triple points $z = \pm(1\pm i)/2$) and similary for $\cP_{2,3}$.
Moreover, some of these results are written in the more general context where $W_t(z)$ is defined as in \eqref{eq:general_W_t(z)} 
but the BBMs $X$ and $Y$ have a correlation proportional to $\lambda \in [0,1]$, that is $Y_u(t) = \sqrt{\lambda} X_u(t) + \sqrt{1-\lambda} \tilde{Y}_u(t)$, where $(X,\tilde{Y})$ is a two-dimensional BBM.
Note that the case considered in this paper is the fully correlated case where $\lambda=1$ and that the independent case corresponds to $\lambda=0$.
Recall that, for $z \in \cP_1 \cup \cP_{1,2}$, $W_t(z)$ converges a.s. without renormalization to a non-trivial limit in the $\lambda = 1$ case \cite{Big1992,KolMei2017}.
The phase $\cP_1$ for any $\lambda \in [0,1]$ is covered in \cite{HarKli2018}\footnote{
	The boundary $\cP_{1,2}$ is also dealt with in \cite{HarKli2018} but we believe there is a flaw in their argument, located in the first line of (4.12).
}. 
For $z = \beta+i\tau \in \cP_2$, called the glassy phase, it has been proved in \cite{MadRhoVar2015} for $\lambda=0$ and then in \cite{HarKli2015} for any $\lambda$ that, after renormalization,  
$W_t(z)$ converges in distribution towards a non-trivial limit, whose distribution, conditionally on $Z_\infty$, is complex isotropic $1/\beta$-stable when $\lambda<1$.
For $z = \beta+i\tau \in \cP_3$, it has been showed in \cite{BarJinMan2010a} for the BRW and in \cite{HarKli2018} for the BBM that, after an exponential renormalization,  
$W_t(z)$ converges in distribution to a limit, which is, conditionally on $W_\infty(2\beta)$, normally distributed with variance $W_\infty(2\beta)$.
In \cite{HarKli2018}, the authors also consider the boundaries $\cP_{1,3}$, $\cP_{2,3}$ and the triple point $\{ \pm(1\pm i)/2 \}$.

In this paper, we study the behavior of $W_\infty(z)$ as $z$ approaches 1 in $\cP_1$. It is natural to also consider the behavior as $z$ approaches other parts of the boundary of $\cP_1$.
When $z$ approaches $\cP_{1,2}$, we expect to get the value of $W_\infty(z)$ on the boundary in the limit. This leads to the following conjecture.

\begin{conj} \label{conj:continuity}
	Almost surely, the function $z \mapsto W_\infty(z)$ is continuous on $\cP_1 \cup \cP_{1,2}$.
\end{conj}

For $z = \beta+i\tau \in \cP_{1,3}$, the limit $W_\infty(z)$ is not well-defined: instead the additive martingale $W_t(z)$ explodes as $t\to\infty$ and, after a polynomial renormalization, converges to a r.v.\ whose distribution, conditionally on $W_\infty(2\beta)$, is normal with variance $W_\infty(2\beta)$. 
We expect a similar behavior for $W_\infty(z)$ when $z$ approaches $\cP_{1,3}$, as detailed in the following conjecture. 
This is also supported by \cite[Theorem~3.20]{JunSakWeb2020}, which establishes this result when $z$ is purely imaginary and for Gaussian multiplicative chaos, a related model discussed below.

\begin{conj} \label{conj:CLT_13}
	Let $z_0 = \beta_0 + i \tau_0 \in \cP_{1,3}$. There exists a constant $\sigma_0>0$ such that, as $z \to z_0$ with $z \in \cP_1$,
	\[
		\left( \frac{1}{2} - \abs{z}^2 \right)^{1/2} W_\infty(z)
		\xrightarrow{(\textup{d})} N,
	\]
	where, conditionally on $W_\infty(2\beta_0)$, the random variable $N$ has distribution $\mathcal{N}(0,\sigma^2_0 W_\infty(2\beta_0))$.
	At the triple point $z_0 = (1+i)/2$, the same result holds but with exponent $1/4$ instead of $1/2$ and with $Z_\infty$ instead of $W_\infty(2\beta_0)$.
\end{conj}

\paragraph{Gaussian multiplicative chaos.}

The study of Gaussian multiplicative chaos (GMC) dates back to Kahane \cite{Kah1985} with motivation coming from the theory of intermittent turbulence, but has known a renewed interest due to its key role in the probabilistic construction of Liouville quantum gravity initiated in \cite{DavKupRhoVar2016}, see \cite{BerPow2026,RhoVar2026} for recent reviews on the topic.

Consider a log-correlated Gaussian field $X$ defined on a domain $D \subset \R^d$, for example the Gaussian free field in dimension 2.
The Gaussian multiplicative chaos (GMC) with parameter $\gamma \in \R$ is the random measure on $D$ defined informally as
\[
	\mu^\gamma (\diff x) \coloneqq e^{\gamma X(x) - \frac{\gamma^2}{2} \E[X(x)^2]} \diff x,
\]
which does not directly make sense because $X$ is only defined as a random distribution and $\E[X(x)^2] = \infty$. Instead, one first considers a regularized version of the field $X_\varepsilon$ at some scale $\varepsilon>0$, for example via convolution or truncation of the high modes, and then one defines $\mu^\gamma$ as the weak limit as $\varepsilon \to 0$ of the following measures
\[
\mu^\gamma_\varepsilon (\diff x) \coloneqq e^{\gamma X_\varepsilon (x) - \frac{\gamma^2}{2} \E[X_\varepsilon (x)^2]} \diff x,
\]
where the convergence holds almost surely for some specific regularizations and only in probability in general.
Furthermore, there exists some explicit $\gamma_c > 0$ such that the limiting measure is non-zero iff $\abs{\gamma} < \gamma_c$; for simple proofs see \cite{Ber2017} for the case $\abs{\gamma} < \gamma_c$ and \cite[Lemma 2.1]{Pow2021} for the case $\abs{\gamma} \geq \gamma_c$.
This phase transition is reminiscent of the one for additive martingales. 
Indeed, the BBM and the BRW at time $t$ are toy models for $X_\varepsilon$ with the correspondance $t = \log(1/\varepsilon)$, see e.g.\ \cite{Arg2017}, and so $W_\infty(\beta)$ is the analog of the total mass of $\mu^\gamma$. 
More precisely, multiplicative cascades mentioned earlier are the BRW analog of the GMC, see also \cite{AruPowSep2020} for a way to use this link rigorously.

As for the BBM, one can construct a non-trivial measure at the critical value $\gamma=\gamma_c$ by one of the three following way, which yield the same measure up to explicit multiplicative constants: 
\begin{itemize}
	\item Consider the derivative of $\mu^\gamma_\varepsilon$ w.r.t.\ $\gamma$ at $\gamma_c$ and then take the weak limit, a.s. or in probability depending on the regularization, as $\varepsilon \to 0$, in a way similar to the convergence of the derivative martingale, see \cite{DupRhoSheVar2014a};
	\item Rescale $\mu^\gamma_\varepsilon$ by a factor $\sqrt{\log(1/\varepsilon)}$ and then take the weak limit in probability as $\varepsilon \to 0$, as in \eqref{eq:seneta-heyde} for the BBM, see \cite{DupRhoSheVar2014b};
	\item Take the weak limit in probability of $\mu^\gamma/(\gamma_c-\gamma)$ as $\gamma \nearrow \gamma_c$, as in \eqref{eq:left-derivative_of_martingale_limits} for the BBM, see \cite{AruPowSep2019}.
\end{itemize}
These results were first proved for specific log-correlated fields, but then extended to more general ones by \cite{JunSakWeb2019}.
See \cite{Pow2021} for a review and \cite{Lac2024a} for simpler proofs.
Our Theorem~\ref{thm:as} suggests that the convergence in the third point could be reinforced into an almost sure convergence.

\begin{conj} \label{conj:GMC}
	For Gaussian log-correlated fields with sufficiently regular covariance function, $\mu^\gamma/(\gamma_c-\gamma)$ converges almost surely as $\gamma \nearrow \gamma_c$.
\end{conj}

Similarly, one can construct complex GMC measures by exponentiating $\gamma X + i \tau Y$ with $Y$ a log-correlated field, which is usually chosen either equal to $X$ or independent of $X$; these two options correspond respectively to the cases $\lambda= 1$ and $\lambda=0$ mentioned above for the complex additive martingale of the BBM.
The exact same phase diagram appears, up to a scaling such that $\beta_c=1$ corresponds to $\gamma_c$.
In the case $X=Y$, the phase $\cP_1$ has been treated in \cite{BarJinMan2010b} and \cite[Appendix A]{AstJonKupSak2011} for specific fields and in \cite[Theorem~6.1]{JunSakWeb2019} for general fields. 
Furthermore, it is proved in these papers that the complex GMC is analytic in its parameter $z = \gamma + i\tau \in \cP_1$.
We also mention \cite{Jeg2026} which proves that the GMC for $\gamma \in (-\gamma_c,\gamma_c)$ can be extended analytically to $\cP_1$ solely based on the study of the derivatives w.r.t.\ $\gamma$ on the real line.
In \cite{LacRhoVar2015}, phases $\cP_1$ and $\cP_3$ as well as boundaries $\cP_{1,2}$, $\cP_{1,3}$ and $\cP_{2,3}$ (without the triple points) have been treated when $X$ and $Y$ are independent for some specific fields. 
In a series of papers \cite{Lac2022a,Lac2022b,Lac2024b}, Lacoin extended these results to more general fields, included the case $X=Y$, and covered the triple points as well.
Thus, only phase $\cP_2$ remains open.
The behavior of complex GMC measures as the parameter $z \in \cP_1$ approaches the boundary of $\cP_1$ is also open and should be given by statements similar to Conjectures~\ref{conj:as}, \ref{conj:continuity} and \ref{conj:CLT_13}. Recall that the case where $z$ is purely imaginary as been treated in \cite{JunSakWeb2020}.

Finally, subcritical derivative GMC measures, whose total mass is analogous to the quantity $\partial W_\infty(\beta)/\partial \beta$ appearing in Corollary \ref{cor:derivative}, also play a role in the probabilistic construction of the quantum Mabuchi theory \cite{LacRhoVar2022}. 
This has raised some recent interested in the study of the left tail of $\partial W_\infty(\beta)/\partial \beta$ for $\abs{\beta} < \beta_c$, see \cite{BonVar2023,CheHuaMa2025}.

\paragraph{Fluctuations.}

As already mentioned, the fluctuation result obtained in Theorem~\ref{thm:fluctu} is closely linked to the 1-stable fluctuations obtained for the derivative martingale in \cite{MaiPai2019} and more generally for $Z_t(f)$ in \cite{MaiPai2021}, recall \eqref{eq:fluctu_Z_t(f)}, including in particular the rescaled critical additive martingale.
More precisely, the proof of Theorem~\ref{thm:fluctu} directly relies on results from \cite{MaiPai2021}.
Fluctuations of the derivative martingale have been generalized to the BRW in \cite{BurIksMal2021} for the one-dimensional convergence and in  \cite{HouRenSon2024} for the multi-dimensional convergence.
Other 1-stable fluctuations have been obtained for the level sets of the limiting extremal process in \cite{MytRoqRyz2022} via a PDE approach and in \cite{HarLouWu2025} with probabilistic techniques.

The study of fluctuations of additive martingales around their limits dates back to \cite{RosTopVat2002} in the context of weighted branching processes which includes the BRW.
In the context of BBM another phase transition occurs at $\beta_c/2$.
When $\abs{\beta}< \beta_c/2$, after an exponential renormalization, $W_t(\beta)-W_\infty(\beta)$ converges in distribution to a normal r.v.\ with variance $W_\infty(2\beta)$ conditionally on $W_\infty(2\beta)$: this was already observed in \cite{RosTopVat2002}, then generalized to multiplicative cascades in \cite[Theorem 2.8]{BarJinMan2010a}, to a multi-dimensional convergence for the BRW in \cite{IksKab2016}. 
The paper \cite{BarJinMan2010a} also includes the complex setting where $\beta$ is replaced by $z = \beta + i \tau \in \cP_1$ with $\abs{\beta}< \beta_c/2$, see also \cite{HarKli2018} for the BBM case.
Fluctuations in the full region $\cP_1 \cup \cP_{1,2}$ where the limit $W_\infty(\beta)$ is non-trivial were obtained in \cite{IksKolMei2020} for the BRW. When $\abs{\beta} = \beta_c/2$ the limit is still normal, but requires an additional polynomial renormalization and now has variance $Z_\infty$.
When $\abs{\beta} > \beta_c/2$, the limit is either described in terms of the limiting extremal process when $z \in \cP_1$, or as a complex $\alpha$-stable Lévy process evaluated at time $Z_\infty$ when $z \in \cP_{1,2}$, where $\alpha \in (1,2)$ and $\alpha=\beta_c/\beta$ for the BBM.
We believe that the latter description in terms of an $\alpha$-stable Lévy process also holds when $z \in \cP_1$ by properties of the exponential PPP underlying the limiting extremal process: this has been observed in \cite[Theorem 5.1]{Cha2024} for the BBM when $z = \beta \in \R$.
Note that the stability parameter $\alpha$ approaches 1 as $z \to \beta_c$.
Furthermore, when $\beta > \beta_c$, the additive martingale tends to 0 but it can be renormalized to converge to an $\alpha$-stable Lévy process evaluated at time $Z_\infty$, with $\alpha=\beta_c/\beta$ taking now values in $(0,1)$, see \cite{BarRhoVar2012}.

\subsection{Proof strategy and organization of the paper}

Proofs of the two main results, Theorems~\ref{thm:as} and \ref{thm:fluctu}, are based on the same ingredient: a concentration result for $W_\infty(z)/(1-z)$ around a quantity of the type $Z_{At}(f)$ when $z = 1 - \frac{w}{\sqrt{t}} \in \cP_1 \cup \cP_{1,2}$, for some explicit function $f$ depending on the parameters $A>0$ and $w \in \C$. 
This concentration result is stated as a first moment bound restricted to some barrier event, see Lemma~\ref{lem:concentration_W_infty}, and is proved via first and second moment estimates.

To prove the almost sure convergence in Theorem~\ref{thm:as}, we use this concentration result with $A=1$, combined with other concentration results for $Z_{t}(f)$ around $\rho(f) Z_\infty$ deduced from moment estimates in \cite{MaiPai2019,MaiPai2021} to obtain
\begin{equation} \label{eq:pointwise_bound}
\E \left[ \1_{G_K} \left| \frac{W_\infty(z)}{1-z} - 2 Z_\infty \right| \right]
\leq C(K) (1-\beta)^{1/4},
\end{equation}
for $z = \beta +i \tau \in \cP_1$ and some event $G_K$ which becomes very likely as the parameter $K$ becomes large, see Corollary~\ref{cor:concentration_for_as_cv}.
Then, we decompose the cone $\{ z : \abs{\tau} \leq \theta(1-\beta) \}$ into pieces $A_n = \{ z : \abs{\tau} \leq \theta(1-\beta), \beta \in [2^{-n-1},2^{-n}] \}$ and show that 
\begin{equation*} 
\E \left[ \1_{G_K} \sup_{z\in A_n} \left| \frac{W_\infty(z)}{1-z} - 2 Z_\infty \right| \right]
\leq C(K) 2^{-n/4}.
\end{equation*}
For this, the key idea is to use Cauchy integral formula to bound the supremum by an integral on a contour around $A_n$ but contained in $\cP_1$ and then switch the integral and the expectation to apply \eqref{eq:pointwise_bound}. 
Then, the almost sure convergence follows from Borel--Cantelli lemma.
The idea of bounding the supremum in terms of an integral comes from the proof of the uniform convergence of additive martingales by Biggins \cite{Big1992}.

To prove Theorem~\ref{thm:fluctu}, we first use Lemma~\ref{lem:concentration_W_infty} to show that the error made when replacing $W_\infty(z)/(1-z)$ by the quantity of the type $Z_{At}(f)$ is negligible as $t \to \infty$ and then $A \to \infty$. Then, we can directly apply results from \cite{MaiPai2021} to get the fluctuations of this quantity around $2Z_\infty$ for any fixed $A$. Finally, we take the limit $A \to \infty$ to conclude the proof.

The paper is organized as follows. In Section~\ref{sec:preliminary}, we first recall the many-to-few lemmas and then turn to the proof of the aforementioned concentration results, including Lemma~\ref{lem:concentration_W_infty}.
Theorem~\ref{thm:as} and Proposition~\ref{prop:inprob} are proved in Section~\ref{sec:as}.
Section~\ref{sec:fluctu} contains the proofs of Theorem~\ref{thm:fluctu} and Corollary~\ref{cor:fluctu}.

\section{Preliminary results}
\label{sec:preliminary}

\subsection{Many-to-few lemmas}

In this section, we work with the BBM started from arbitrary position $x\in\R$ and denote the corresponding probability and expectation by $\P_x$ and $\E_x$. 

We first recall the classical many-to-one lemma, see e.g.\ \cite[Section 4.1]{MaiPai2019}.

\begin{lem}[Many-to-one]
	For any $x\in\R$, $t\ge 0$ and any measurable non-negative function $F \colon \cC([0,t])\to \R_+$ where $\cC([0,t])$ is the set of all real-valued continuous functions on $[0,t]$ equipped with uniform norm, we have
	\begin{equation}\label{many-to-1}
	\E_x\left[ \sum_{u\in\calN_t} e^{-X_u(t)} F(X_u(s), s\in[0,t]) \right] = e^{-x} \E_x\left[ F(B_s, s\in[0,t])\right],
	\end{equation}
	where $(B_s)_{s\ge0}$ is a standard 1-dimensional Brownian motion started from $x$ under $\P_x$.
\end{lem}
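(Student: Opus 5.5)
We prove the many-to-one formula by the standard spine (size-biasing) argument, reducing to the fact that the expected number of particles at time $t$ is $e^{t/2}$. The choice $\lambda = 1/(2\E[L-1])$ guarantees this: the branching rate times the mean offspring excess is exactly $1/2$.

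\emph{Step 1: a pathwise expectation formula.} We first establish the unweighted identity
\[
\E_x\left[\sum_{u\in\calN_t} G(X_u(s),s\in[0,t])\right] = e^{t/2}\,\E_x\left[G(Y_s,s\in[0,t])\right],
\]
where $Y$ is a Brownian motion with drift $1$ started from $x$ and $G\ge0$ is measurable. We prove it first for functionals $G$ depending on finitely many times $0\le s_1<\dots<s_k\le t$. For these, we condition on the first branching time $T\sim\mathrm{Exp}(\lambda)$ and use the branching property. This shows that $m_t(G)\coloneqq \E_x[\sum_u G]$ satisfies the renewal equation
\[
m_t = e^{-\lambda t}\,\E[G(Y)] + \int_0^t \lambda e^{-\lambda r}\,\E[L]\,(\text{Markov shift of } m_{t-r})\,\diff r.
\]
This renewal equation is also solved by $e^{(\lambda\E[L]-\lambda)t}\E[G(Y)] = e^{t/2}\E[G(Y)]$. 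Uniqueness follows from Gronwall's lemma, since $\E[L]<\infty$ and the solution is locally bounded. We then extend to all measurable $G$ by a monotone class argument, using that the cylinder sets generate the Borel $\sigma$-field of $\cC([0,t])$.

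\emph{Step 2: Girsanov.} Apply Step~1 with $G(\omega) = e^{-\omega(t)}F(\omega)$. This gives
\[
\E_x\left[\sum_u e^{-X_u(t)}F\right] = e^{t/2}\,\E_x\left[e^{-Y_t}F(Y)\right].
\]
Write $Y_s = B_s + s$, with $B$ a standard Brownian motion from $x$. Then
\[
e^{t/2}e^{-Y_t} = e^{-x}\,e^{-(B_t-x)-t/2}.
\]
The factor $e^{-(B_t-x)-t/2}$ is the Girsanov density that removes the drift $+1$. So the right-hand side equals $e^{-x}\E_x[F(B)]$ with $B$ a standard Brownian motion, which is the claimed formula.

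The only delicate point is the rigorous justification in Step~1, namely the uniqueness of the renewal solution and the monotone class extension. Both are routine, which is why the paper can simply cite \cite[Section 4.1]{MaiPai2019}. If one prefers, one can instead cite the general spine decomposition of Hardy–Harris, in which the spine moves as a drift-$1$ Brownian motion and the size-biased branching rate gives the factor $e^{t/2}$.
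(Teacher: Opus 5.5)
Your proof is correct. The paper gives no proof of this lemma: it only quotes the classical fact from Maillard--Pain. Your two-step argument is the standard derivation behind it: first the unweighted many-to-one formula with growth rate $\lambda(\E[L]-1)=1/2$, then Girsanov to absorb $e^{t/2}e^{-Y_t}$ into the density that removes the drift.

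Your Gronwall uniqueness step tacitly uses local boundedness of $\E_x[\abs{\calN_t}]$, so state it. For example, $\E[\abs{\calN_t}]\le\sum_{n\ge0}(\E[L])^n\,\P(T_1+\dots+T_n\le t)<\infty$, with $T_i$ i.i.d.\ $\mathrm{Exp}(\lambda)$ lifetimes. Alternatively, you can drop the renewal equation altogether: the genealogy is independent of the motions, so conditionally on it every ancestral path is a drift-$1$ Brownian motion from $x$.
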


In particular, for any bounded and measurable function $\varphi\colon\R\to\C$, the many-to-one lemma shows that
\begin{equation*}
\E_x\left[ \sum_{u\in\calN_t} e^{-X_u(t)} \varphi(X_u(t)) \ind{\forall s\in [0,t], X_u(s) >0} \right] = e^{-x} \E_x\left[ \varphi(B_t) \ind{\forall s\in[0,t], B_s >0} \right].
\end{equation*}
Let $(R_s)_{s\ge 0}$ be a 3-dimensional Bessel process, starting from $x$ under $\P_x$. Then, by \cite{Imh1984},
one can rewrite the previous equality as
\begin{equation}\label{M-t-1}
\E_x\left[ \sum_{u\in\calN_t} e^{-X_u(t)} \varphi(X_u(t)) \ind{\forall s\in [0,t], X_u(s) >0} \right] 
= e^{-x} \E_x\left[\frac{x}{R_t}\varphi(R_t) \right].
\end{equation}
Note that, for $x,t>0$, the law of $R_t$ under $\P_x$ is 
\begin{equation}\label{densityRt}
\P_x( R_t \in \diff y ) = \ind{y>0}\frac{y}{x} \left( e^{-\frac{(y-x)^2}{2t}} - e^{-\frac{(y+x)^2}{2t}} \right) \frac{\diff y }{\sqrt{2\pi t}},
\end{equation}
whereas the law of $R_1$ under $\P_0$ is the measure $\rho$ introduced in \eqref{eq:cv_Z_t(f)}.

Moreover, under the assumption that $\E[L^2]<\infty$, the many-to-two formula holds as well. 
We use the following specific form, which can be found in \cite[Eq.\ (3.3)]{MaiPai2021}: for any $x\in\R$, $t\ge0$ and any measurable bounded function $\varphi \colon \R\to\C$, we have 
\begin{multline}\label{many-to-2}
\E_x\left[ \abs{ \sum_{u\in\calN_t} e^{-X_u(t)} \varphi(X_u(t) ) \ind{\forall s\in[0,t], X_u(s)>0} }^2\right] \\
=\E[L(L-1)] e^{-x} \int_0^t \diff r \int_0^\infty \Biggl\lvert \E_y\Biggl[ \sum_{u\in\calN_{t-r}}e^{-X_u(t-r)} \varphi(X_u(t-r)) \ind{\forall s\in [0,t-r], X_u(s) > 0 } \Biggr]^2 e^y q_r(x,y) \diff y\\
+e^{-x} \int_0^\infty e^{-y} \abs{\varphi(y)}^2 q_t(x,y) \diff y,
\end{multline}
where $q_r(x,y) \coloneqq (2\pi r)^{-1/2} \bigl( e^{-(x-y)^2/(2r)} - e^{-(x+y)^2/(2r)} \bigr)$ for $x,y,r>0$ is the transition kernel of the Brownian motion killed at 0. 
Note that the equation in \cite{MaiPai2021} is stated for $\varphi$ with nonnegative values, but this version follows by polarization identity.

\subsection{Moment estimates for the BBM killed at 0}

For any $t_0\geq 0$ and $\gamma \in \R$, we introduce a modification of the additive martingale where particles are killed when going below level $\gamma$ after time $t_0$: for any $z \in \C$ and $s \geq t_0$, we set
\begin{equation} \label{eq:def_Wtilde}
\widetilde{W}_{s}^{t_0,\gamma}(z)
\coloneqq \sum_{u\in\calN_s} e^{-zX_u(s)-(1-z)^2s/2} \ind{\forall r \in [t_0,s], X_u(r) > \gamma}.
\end{equation}
In this section, we establish a first moment estimate and a second moment bound for $\widetilde{W}_{s}^{0,0}(z)$.

The following lemma deal with the first moment of $\widetilde{W}_{s}^{0,0}(z)$. 
It first gives the limit as $s \to \infty$ and then provides an upper bound for any $s \geq 0$. This bound is not intended to be sharp; rather, it is simplified for convenient use in the proof of the second moment bound in Lemma~\ref{lem:second_moment_0}.

\begin{lem} \label{lem:first_moment_0}
	For any $z\in\C$, $x>0$ and $s>0$,
	\begin{equation} \label{eq:first_moment_0}
	\E_x \left[ \widetilde{W}_{s}^{0,0}(z) \right]
	= e^{-x} G(x,z,s),
	\end{equation}
	where we set
	\begin{equation} \label{eq:def_G}
	G(x,z,s) \coloneqq e^{-(1-z)^2s/2} 
	\int_0^\infty e^{(1-z) y} 
	\left( e^{-(y-x)^2/(2s)} - e^{-(y+x)^2/(2s)} \right)
	\frac{\diff y}{\sqrt{2\pi s}}.
	\end{equation}
	Moreover, for any $x>0$ and $z = \beta + i \tau$ with $\beta \in [0,1)$ and $\abs{\tau} \leq 1-\beta$, we have
	\begin{equation} \label{eq:CV_G}
	G(x,z,s) \xrightarrow[s\to\infty]{} 2 \sinh((1-z)x),
	\end{equation} 
	and, for any $s\geq 0$, 
	\begin{equation} \label{eq:bound_G}  
	\abs{G(x,z,s)}
	\leq 4 (x+1) e^{2(1-\beta)x} \left((1-\beta) 
	+ \left( \frac{1}{(1-\beta)\sqrt{s}} \wedge 1 \right)  \right).
	\end{equation}
\end{lem}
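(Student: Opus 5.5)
The identity \eqref{eq:first_moment_0} follows directly from the many-to-one formula \eqref{M-t-1}. Write $e^{-zX_u(s)} = e^{-X_u(s)} e^{(1-z)X_u(s)}$ and apply \eqref{M-t-1} with $\varphi(y) = e^{(1-z)y}$. This $\varphi$ is not bounded, so we first truncate, say $\varphi\1_{y\le n}$, and then pass to the limit by monotone convergence on $\abs{\varphi}$ followed by dominated convergence. This yields $e^{-x}\E_x[\frac{x}{R_s}e^{(1-z)R_s}]$. Plugging in the density \eqref{densityRt} cancels the factor $y/x$, and multiplying by $e^{-(1-z)^2s/2}$ gives exactly $e^{-x}G(x,z,s)$.

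For \eqref{eq:CV_G}, write $a = 1-z$ and complete the square in each Gaussian term:
\[
e^{ay - (y\mp x)^2/(2s)} = e^{\pm a x + a^2 s/2}\, e^{-(y\mp x - as)^2/(2s)}.
\]
The factor $e^{a^2s/2}$ cancels the prefactor, so
\[
G = e^{ax}\,\P\left(x + as + \sqrt{s}N > 0\right) - e^{-ax}\,\P\left(-x + as + \sqrt{s}N > 0\right),
\]
where these are understood as complex Gaussian integrals, i.e. $\int_{-(x+as)/\sqrt s}^\infty e^{-u^2/2}\,\diff u/\sqrt{2\pi}$ along a shifted contour. Since $\re a = 1-\beta > 0$ and $\abs{\im a} \le \re a$, the lower endpoint $-(x\pm as)/\sqrt{s} = -a\sqrt s + O(s^{-1/2})$ goes to infinity in the direction $-a$, whose argument has modulus at most $\pi/4$. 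Rotating the contour back to the real line, the error involves $\int e^{-u^2/2}$ along the segment, which is controlled because $\re(u^2) \ge 0$ in that sector. Each integral therefore tends to $1$, and we obtain $e^{ax} - e^{-ax} = 2\sinh(ax)$. The boundary case $\abs{\tau} = 1-\beta$, where $\re(a^2) = 0$, needs care: there $e^{-u^2/2}$ is only oscillatory on the ray, so convergence follows from the tail of the complex error function, $\operatorname{erfc}(w) \sim e^{-w^2}/(w\sqrt\pi)$ as $\abs{w} \to \infty$ with $\abs{\arg w} \le \pi/4$, which decays like $1/\abs{w}$.

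For the uniform bound \eqref{eq:bound_G}, we avoid the complex error function and bound directly. Split $G = G_1 + G_2$, where $G_1 = e^{ax}\Phi_+ - e^{-ax}\Phi_-$ with $\Phi_\pm$ the complex tail integrals as above. The plan is to write
\[
G = (e^{ax} - e^{-ax}) + \text{[tail-remainder terms]}.
\]
The first term satisfies $\abs{2\sinh(ax)} \le 2\abs{a}x\,e^{(1-\beta)x} \le 2\sqrt2\,(1-\beta)x\,e^{(1-\beta)x}$. The remainder terms are $e^{\pm ax}\bigl(1 - \Phi_\pm\bigr)$, and we bound $\abs{1-\Phi_\pm}$ using $\abs{\operatorname{erfc}(w)} \le C\min(1, 1/\abs{w})$ for $\abs{\arg w} \le \pi/4$. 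This estimate comes from integrating along the ray $w + \R_+$, where $\abs{e^{-v^2}} = e^{-\re(v^2)}$ and $\re((w+r)^2) \ge r^2 + 2r\re w$. Here $\abs{w} \asymp \abs{a}\sqrt s \ge (1-\beta)\sqrt s$, up to a shift by $x/\sqrt s$. The only delicate configuration is when $x/\sqrt s$ pushes $\arg w$ out of the sector, which happens for $w = (-x + as)/\sqrt{2s}$. In that case we fall back on the crude bound $\abs{\Phi_-} \le$ (Gaussian integral of $\abs{\cdot}$) $\le e^{\text{quadratic}}$, which gives at most a factor $e^{2(1-\beta)x}$. Alternatively, we bound $G$ directly via $\abs{e^{ay}} = e^{(1-\beta)y}$ and $e^{-(y-x)^2/2s} - e^{-(y+x)^2/2s} \le \min(1, 2xy/s)\,e^{-(y-x)^2/2s}$.

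The main obstacle is this last estimate: producing the explicit constant $4(x+1)e^{2(1-\beta)x}$ uniformly in $s$. The difficulty is that the cancellation giving $\sinh$ is lost when $s$ is small relative to $1/(1-\beta)^2$, and that cancellation must be traded for the $\frac{1}{(1-\beta)\sqrt s}\wedge 1$ term. For the regime $s \le (1-\beta)^{-2}$, the plan is to use the crude direct bound: $e^{-(1-\beta)^2s/2+\tau^2s/2} \le 1$ since $\abs\tau \le 1-\beta$, and the integral is $\le E[e^{(1-\beta)\abs{x+\sqrt sN}}] \le 2e^{(1-\beta)x + (1-\beta)^2s/2}$. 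For the regime $s \ge (1-\beta)^{-2}$, we use the erfc route above.
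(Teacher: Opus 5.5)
You have a genuine gap in the uniform bound \eqref{eq:bound_G}; your derivation of \eqref{eq:first_moment_0} is fine, and your route to \eqref{eq:CV_G} can be made to work for fixed $x$. The plan breaks exactly in the boundary case $\abs{\tau}=1-\beta$, which the lemma includes and which is needed later for $\abs{b}=a$ in Theorem~\ref{thm:fluctu}. Your treatment of $e^{ax}(1-\Phi_+)$ is fine, since $w_+=(x+as)/\sqrt{2s}$ always lies in the sector $\abs{\arg}\le\pi/4$. The problem is the term $e^{-ax}(1-\Phi_-)=\tfrac12 e^{-ax}\operatorname{erfc}(w_-)$ with $w_-=(-x+as)/\sqrt{2s}$. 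When $\abs{\tau}=1-\beta$, $a$ sits on the edge of the sector, so $w_-$ lies outside it for \emph{every} $x>0$, not only when $x/\sqrt{s}$ is large. For $\abs\tau$ close to $1-\beta$, the same happens for bounded $x$ over a long range of $s$. In that regime the target is small, of order $(1-\beta)+\frac{1}{(1-\beta)\sqrt s}$ for bounded $x$, and this smallness is indispensable, since Lemma~\ref{lem:second_moment_0} squares it. Your fallback, a crude bound on $\abs{\Phi_-}$ giving ``at most a factor $e^{2(1-\beta)x}$'', carries no small factor. Bounding $\Phi_-$ instead of $1-\Phi_-$ discards the cancellation that produces $2\sinh(ax)$; taken literally it only gives $\abs{e^{-ax}\Phi_-}\le e^{-(1-\beta)x+\tau^2 s/2}$, which is exponentially large in $s$. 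The root cause is that the sector estimate $\abs{\operatorname{erfc}(w)}\le C\min(1,1/\abs w)$ throws away the factor $e^{-\re(w^2)}$, and that factor is precisely what must cancel against $\abs{e^{-ax}}=e^{-(1-\beta)x}$. Your erfc asymptotics for the boundary case in \eqref{eq:CV_G} are also stated only for $\abs{\arg w}\le\pi/4$, so as written they do not cover $w_-$ either, although the expansion does hold in a larger sector.

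The missing step is to take moduli along the horizontal line without any sector condition: $\abs{1-\Phi_-}\le e^{\tau^2 s/2}\,\P\bigl(N>((1-\beta)s-x)/\sqrt{s}\bigr)$. For $x\le(1-\beta)s/2$, the Gaussian tail bound gives
$\abs{e^{-ax}(1-\Phi_-)}\le e^{-x^2/(2s)}e^{(\tau^2-(1-\beta)^2)s/2}\bigl(\frac{2}{(1-\beta)\sqrt{s}}\wedge 1\bigr)\le\frac{2}{(1-\beta)\sqrt{s}}\wedge 1$,
using only $\abs\tau\le 1-\beta$. For $x>(1-\beta)s/2$ the target is no longer small, and the direct bound you mention suffices, using $1-e^{-2xy/s}\le 1\wedge(2xy/s)$ and $e^{\tau^2 s/2}\le e^{(1-\beta)x}$. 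Adding $\abs{2\sinh(ax)}\le 2\sqrt2(1-\beta)xe^{(1-\beta)x}$ then yields the constant $4$. This is the paper's argument, packaged more economically. It observes that the full-line integral $G_0$ equals $2\sinh((1-z)x)$ \emph{exactly} for every $s$, because the Gaussian moment generating function identity holds for complex parameters. It then bounds $G-G_0$, the $y<0$ part, via $0\le e^{-(y+x)^2/(2s)}-e^{-(y-x)^2/(2s)}\le e^{-(y+x)^2/(2s)}$. That single real-variable estimate gives both \eqref{eq:CV_G} and \eqref{eq:bound_G} uniformly up to $\abs\tau=1-\beta$, with no contour rotation and no complex error function.
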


\begin{proof}
	Let $z = \beta + i \tau$ with $\beta,\tau\in\R$, $x >0$ and $s>0$.
	By the many-to-one lemma in the form of \eqref{M-t-1}, we get
	\begin{equation} \label{eq:first_moment_step1}
	\E_x \left[ \widetilde{W}_{s}^{0,0}(z) \right]
	= x e^{-x-(1-z)^2s/2} 
	\E_x \left[ \frac{e^{(1-z) R_s}}{R_s} \right].
	\end{equation}
	Using the density of $R_s$ under $\P_x$ given in \eqref{densityRt}, we obtain
	\begin{align*}
	\E_x \left[ \frac{e^{(1-z) R_s}}{R_s} \right]
	&= \int_0^\infty \frac{e^{(1-z) y}}{y} \cdot \frac{y}{x}
	\left( e^{-(y-x)^2/(2s)} - e^{-(y+x)^2/(2s)} \right)
	\frac{\diff y}{\sqrt{2\pi s}}.
	\end{align*}
	This, combined with \eqref{eq:first_moment_step1}, proves \eqref{eq:first_moment_0}.
	
	In order to control the function $G$, we first compare it with the following function:
	\begin{equation} \label{eq:def_G_0}
	G_0(x,z,s) \coloneqq e^{-(1-z)^2 s/2} 
	\int_{\mathbb{R}} e^{(1-z) y} 
	\left( e^{-(y-x)^2/(2s)} - e^{-(y+x)^2/(2s)} \right)
	\frac{\diff y}{\sqrt{2\pi s}},
	\end{equation}
	which is defined similarly, except that the domain of integration has been extended to the full real line.
	Note that by the formula for the moment generating function of the Gaussian distribution, we have
	\begin{equation} \label{eq:G_0}
	G_0(x,z,s) = e^{(1-z)x} - e^{-(1-z)x} 
	= 2 \sinh((1-z)x), \qquad \forall s>0.
	\end{equation}
	We now control the difference between $G$ and $G_0$. Noting that $e^{-(y-x)^2/(2s)} \leq e^{-(y+x)^2/(2s)}$ for $x>0$ and $y<0$, we obtain 
	\begin{align*}
	|G(x,z,s)-G_0(x,z,s)|
	&\leq e^{-[(1-\beta)^2-\tau^2]s/2} 
	\int_{-\infty}^0 e^{(1-\beta) y} 
	e^{-(y+x)^2/(2s)} \frac{\diff y}{\sqrt{2\pi s}} \\
	&= e^{-(1-\beta)x+\tau^2 s/2} 
	\int_{-\infty}^0 e^{-(y+x-(1-\beta)s)^2/(2s)} 
	\frac{\diff y}{\sqrt{2\pi s}}.
	\end{align*}
	If $x < (1-\beta)s/2$, we use the Gaussian tail bound $\int_a^\infty e^{-t^2/2} \frac{\diff t}{\sqrt{2\pi}} \leq \bigl(\frac{1}{a} \wedge 1\bigr) e^{-a^2/2}$ for $a>0$ to get 
	\begin{align}
	|G(x,z,s)-G_0(x,z,s)|
	&\leq e^{-(1-\beta)x+\tau^2 s/2} 
	\left( \frac{\sqrt{s}}{(1-\beta)s-x} \wedge 1 \right)
	e^{-((1-\beta)s-x)^2/(2s)} \nonumber \\
	&\leq e^{-x^2/(2s)} e^{[\tau^2-(1-\beta)^2] s/2} 
	\left( \frac{2}{(1-\beta)\sqrt{s}} \wedge 1 \right) \nonumber \\
	&\leq \frac{2}{(1-\beta)\sqrt{s}} \wedge 1, \label{eq:bound_G_first_case}
	\end{align}
	using the fact that $|\tau| \leq 1-\beta$.
	This expression vanishes as $s \to \infty$ for fixed $x$ and $z$; therefore, together with \eqref{eq:G_0}, this proves \eqref{eq:CV_G}.
	Moreover, using that the exponential function is $e^a$-Lipschitz on $\{ w \in \C : \re w \leq a\}$, we also have the bound $|G_0(x,z,s)| \leq 2e^{(1-\beta)x}|1-z|x$ and thus obtain
	\begin{align*}
	|G(x,z,s)|
	&\leq 2e^{(1-\beta)x}|1-z|x + 
	\left( \frac{2}{(1-\beta)\sqrt{s}} \wedge 1 \right),
	\end{align*}
	which proves \eqref{eq:bound_G} (note that $|1-z| \leq \sqrt{2}(1-\beta)$) in the case $x < (1-\beta)s/2$.
	
	If $x \ge (1-\beta)s/2$, then we estimate $G$ directly from its definition in \eqref{eq:def_G}:
	\begin{align*} 
	|G(x,z,s)| 
	&\le e^{-[(1-\beta)^2-\tau^2]s/2} 
	\int_0^\infty e^{(1-\beta) y} 
	\left(e^{-(y-x)^2/(2s)} - e^{-(y+x)^2/(2s)}\right)
	\frac{\diff y}{\sqrt{2\pi s}} \\
	&= e^{(1-\beta)x+\tau^2 s/2} 
	\int_0^\infty e^{-(y-(1-\beta) s-x)^2/(2s)}
	\left(1-e^{-2xy/s}\right)
	\frac{\diff y}{\sqrt{2\pi s}}.
	\end{align*}
	On the one hand, bounding $1-e^{-2xy/s} \le 1$ yields $|G(x,z,s)| \le e^{(1-\beta)x+\tau^2 s/2}$.
	On the other hand, using the bound $1-e^{-2xy/s} \le 2xy/s$, we obtain
	\begin{align*}  
	|G(x,z,s)|
	&\le e^{(1-\beta)x+\tau^2 s/2} \cdot \frac{2x}{s} \int_{\mathbb{R}} |y| \frac{e^{-(y-(1-\beta) s-x)^2/(2s)}}{\sqrt{2\pi s}} \diff y \\
	&\le e^{(1-\beta)x+\tau^2 s/2} \cdot \frac{2x}{s} \bigl((1-\beta)s + x + \sqrt{s}\bigr).
	\end{align*}
	Therefore,
	\begin{align*}  
	|G(x,z,s)|
	&\le 2 e^{(1-\beta)x+\tau^2 s/2} \cdot \left[ 1 \wedge \left((1-\beta)x +\frac{x^2}{s}+\frac{x}{\sqrt{s}} \right) \right] \\
	&\le 2 e^{(1-\beta)x+\tau^2 s/2} \cdot \left[ (1-\beta)x + \left( \frac{2x}{\sqrt{s}} \wedge 1 \right) \right],
	\end{align*}
	where we used that if $x/\sqrt{s} \le 1$ then $x^2/s \le x/\sqrt{s}$, and we bounded the minimum by $1$ in the opposite case.
	Finally, using $|\tau| \le 1-\beta$ and $x \ge (1-\beta)s/2$, we have $e^{\tau^2 s/2} \le e^{(1-\beta)^2 s/2} \le e^{(1-\beta)x}$, which proves \eqref{eq:bound_G} in this case, thereby completing the proof.
\end{proof}

We bound the second moment of $\widetilde{W}_{s}^{0,0}(z)$ in the next lemma.
\begin{lem} \label{lem:second_moment_0}
	There exists a constant $C_{\eqref{eq:2mom_0}}>0$ such that for any $z = \beta + i \tau$ with $\beta \in [\tfrac45,1)$ and $|\tau| \le 1-\beta$, any $x>0$ and any $s\ge 1$,
	\begin{equation} \label{eq:2mom_0}
	\E_x \left[ \bigl| \widetilde{W}_{s}^{0,0}(z) \bigr|^2 \right]
	\le C_{\eqref{eq:2mom_0}} \, e^{-x}
	\Bigl( 1 + \frac{x}{\sqrt{s}} \Bigr)
	\Bigl( (1-\beta)^2 + \frac{\log(s+1)}{(1-\beta)^2 s} \Bigr).
	\end{equation}
\end{lem}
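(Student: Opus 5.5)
We apply the many-to-two formula \eqref{many-to-2} with $\varphi(y) = e^{(1-z)y}$, after writing $\widetilde{W}_{s}^{0,0}(z) = e^{-(1-z)^2 s/2} \sum_{u} e^{-X_u(s)} e^{(1-z)X_u(s)} \1_{\{\text{killed at }0\}}$. The function $\varphi$ is unbounded, so we first truncate it and then pass to the limit by monotone convergence. The prefactor contributes $\abs{e^{-(1-z)^2 s}} = e^{-[(1-\beta)^2-\tau^2]s}$. The splitting $(1-z)^2 s = (1-z)^2 r + (1-z)^2(s-r)$ then lets us recognise the inner expectation at time $s-r$ started from $y$ as $e^{(1-z)^2(s-r)/2}\, e^{-y} G(y,z,s-r)$ by Lemma~\ref{lem:first_moment_0}. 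This gives
\[
\E_x\bigl[\abs{\widetilde W_s^{0,0}(z)}^2\bigr]
= \E[L(L-1)]\, e^{-x}\int_0^s \diff r\int_0^\infty e^{-2[(1-\beta)^2-\tau^2] r/2}\, e^{-y}\abs{G(y,z,s-r)}^2 q_r(x,y)\diff y
+ e^{-x}\int_0^\infty e^{-y} e^{2(1-\beta)y} e^{-[(1-\beta)^2-\tau^2]s} q_s(x,y)\diff y .
\]
The prefactor $e^{-[(1-\beta)^2-\tau^2]r}$ is at most $1$ because $\abs{\tau}\le 1-\beta$.

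For the main term we insert the bound \eqref{eq:bound_G}, which gives $\abs{G(y,z,s-r)}^2 \le C(y+1)^2 e^{4(1-\beta)y}\bigl((1-\beta)^2 + \tfrac{1}{(1-\beta)^2(s-r)}\wedge 1\bigr)$. Since $\beta\ge 4/5$, we have $4(1-\beta)\le 4/5<1$, so $(y+1)^2e^{-(1-4(1-\beta))y}\le C e^{-y/10}$, a uniformly integrable weight; this is exactly why the restriction $\beta\ge\tfrac45$ is imposed. We then need the estimate
\[
\int_0^\infty e^{-cy}(1+y)^2 q_r(x,y)\diff y \le C\,\frac{1+x}{(1+r)^{3/2}}\wedge \frac{C}{\sqrt{r}},
\]
which follows from $q_r(x,y)\le \frac{2xy}{r}\frac{1}{\sqrt{2\pi r}}$ together with $q_r\le (2\pi r)^{-1/2}$. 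For large $x$ relative to $\sqrt r$ we use instead $q_r(x,y)\le (2\pi r)^{-1/2}e^{-(x-y)^2/(2r)}$ integrated against $e^{-cy}$, which gives $\lesssim r^{-1/2}e^{-x^2/(4r)}$ when $x\ge 2/c$, say. Combining these, the $r$-integral of the constant part $(1-\beta)^2$ is bounded by $\int_0^s \min\bigl(\tfrac{1+x}{(1+r)^{3/2}}, \dots\bigr)\diff r \le C$. I expect a factor depending on $x$ to appear in this bound; this needs care, since naively we only get $(1+x)$. To obtain the claimed $(1+x/\sqrt s)$ I would split the range at $r\approx x^2$, using $r^{-1/2}e^{-x^2/(4r)}$ for $r\le x^2$, which integrates to $O(x)$ — too big. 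So instead the cleaner route is to use that $\int e^{-cy}(1+y)^2 q_r(x,y)\diff y\le C\,\P_x(\text{BM killed at 0 is in }[0,K]\text{ at }r)$-type bounds, whose integral over $r\in[0,\infty)$ equals the Green function $\int_0^\infty q_r(x,y)\diff r = 2(x\wedge y)$, bounded by $2y$ uniformly in $x$. Integrated against $e^{-cy}(1+y)^2$ this gives an $O(1)$ total bound uniformly in $x$, which is what we want.

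For the piece $\frac{1}{(1-\beta)^2(s-r)}\wedge 1$, we split $r\in[0,s/2]$ and $r\in[s/2,s]$. On the first range this factor is at most $\frac{2}{(1-\beta)^2 s}$, and the Green-function bound gives $O(1)$, hence a contribution $O\bigl(\frac{1}{(1-\beta)^2s}\bigr)$. On the second range we use $q_r(x,y)\le C\frac{(1+x)y}{r^{3/2}}\le C\frac{(1+x)y}{s^{3/2}}$ together with $\int_{s/2}^{s}\bigl(\frac{1}{(1-\beta)^2(s-r)}\wedge1\bigr)\diff r\le \frac{C\log(s+1)}{(1-\beta)^2}$. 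This yields $C(1+x)\frac{\log(s+1)}{(1-\beta)^2 s^{3/2}}$, which is at most $C(1+x/\sqrt s)\frac{\log(s+1)}{(1-\beta)^2 s}$ since $s\ge1$. The logarithm in the statement comes precisely from here.

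Finally, the last term of the many-to-two formula is $e^{-x}e^{-[(1-\beta)^2-\tau^2]s}\int e^{(1-2\beta)y}q_s(x,y)\diff y$. Since $1-2\beta\le -3/5$, we have $\int e^{-3y/5}q_s(x,y)\diff y\le C\frac{1+x}{s^{3/2}}\wedge \frac{C}{\sqrt s}$, which is at most $C(1+x/\sqrt s)/s$ and hence absorbed, because $\frac{1}{s}\le \frac{\log(s+1)}{(1-\beta)^2 s}$. The main obstacle is the uniformity in $x$ — getting $(1+x/\sqrt s)$ rather than $(1+x)$ — which is handled by the Green-function bound on $[0,s/2]$ and by $r\ge s/2$ on the rest.
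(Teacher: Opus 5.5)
Your proposal is correct and follows essentially the same route as the paper. Both arguments use the many-to-two formula and the bound \eqref{eq:bound_G} on $G$, then apply the Green-function identity $\int_0^\infty q_r(x,y)\,\diff r = 2(x\wedge y)\le 2y$ on $r\in[0,s/2]$, and the estimate $q_r(x,y)\le C xy\, r^{-3/2}$ on $r\in[s/2,s]$ and for the last term. The differences are harmless. You treat the $(1-\beta)^2$ part with the Green function over all of $[0,s]$, which avoids the paper's extra $x(1-\beta)^2/\sqrt{s}$ term. You also flag the truncation of the unbounded $\varphi$, which the paper skips; for complex $\varphi$ that step is better justified by Fatou on the left-hand side than by monotone convergence.
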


\begin{proof}
	By the many‑to‑two formula \eqref{many-to-2}, we have
	\begin{align}
	\E_x \left[ \bigl| \widetilde{W}_{s}^{0,0}(z) \bigr|^2 \right]
	&= \E[L(L-1)] e^{-x} \int_0^s \left( \int_0^\infty \bigl| e^{-(1-z)^2 r/2} \,
	\E_y \bigl[ \widetilde{W}_{s-r}^{0,0}(z) \bigr] \bigr|^2
	e^y \, q_r(x,y) \,\diff y \right) \diff r \nonumber \\
	&\quad + e^{-x} \int_0^\infty \bigl| e^{-z y - (1-z)^2 s/2} \bigr|^2 e^{y} \, q_s(x,y) \,\diff y, \label{eq:second_moment_step1}
	\end{align}
	where $q_r(x,y) \coloneqq (2\pi r)^{-1/2} \bigl( e^{-(x-y)^2/(2r)} - e^{-(x+y)^2/(2r)} \bigr)$ for $x,y,r>0$.
	Note that $q_r(x,\cdot)$ is the density at time $r$ of a Brownian motion starting from $x$ and killed at $0$.
	
	\underline{Estimate of the second term.}
	Using the bound $q_s(x,y) \le C_1 x y s^{-3/2}$ for all $x,y,s>0$, the second term in \eqref{eq:second_moment_step1} is at most
	\begin{equation} \label{eq:second_moment_part1}
	\frac{C_1 x e^{-x}}{s^{3/2}} \, e^{-[(1-\beta)^2-\tau^2]s}
	\int_0^\infty y e^{(1-2\beta)y} \,\diff y
	\le \frac{C_{\eqref{eq:second_moment_part1}} x e^{-x}}{s^{3/2}},
	\end{equation}
	where we used $\beta \ge 4/5$ to bound the integral and $|\tau|\le 1-\beta$ to control the exponential factor.
	
	\underline{Estimate of the first term.}
	We now focus on the first term in \eqref{eq:second_moment_step1}.
	By Lemma \ref{lem:first_moment_0},
	\begin{align*}
	\bigl| \E_y \bigl[ \widetilde{W}_{s-r}^{0,0}(z) \bigr] \bigr|
	&\le 4 (y+1) e^{(1-2\beta)y}
	\Bigl( (1-\beta) + \Bigl( \frac{1}{(1-\beta)\sqrt{s-r}} \wedge 1 \Bigr) \Bigr).
	\end{align*}
	Since $|e^{-(1-z)^2 r/2}| \le 1$, the first term is bounded by
	\begin{equation*}
	C_2 e^{-x} \int_0^s \Bigl( (1-\beta)^2 + \frac{1}{(1-\beta)^2(s-r+1)} \Bigr)
	\Bigl( \int_0^\infty (y+1)^2 e^{(3-4\beta)y} \, q_r(x,y) \,\diff y \Bigr) \diff r.
	\end{equation*}
	We split the $r$‑integral into $\int_0^{s/2}$ and $\int_{s/2}^s$.
	
	\textit{The part $r\in[0,s/2]$.}
	For $r\le s/2$ we have $s-r+1 \ge s/2$, hence
	\begin{align}
	& C_2 e^{-x} \Bigl( (1-\beta)^2 + \frac{2}{(1-\beta)^2 s} \Bigr)
	\int_0^\infty (1+y)^2 e^{(3-4\beta)y}
	\Bigl( \int_0^{s/2} q_r(x,y) \,\diff r \Bigr) \diff y \nonumber \\
	&\le C_{\eqref{eq:second_moment_part2}} e^{-x} \Bigl( (1-\beta)^2 + \frac{1}{(1-\beta)^2 s} \Bigr). \label{eq:second_moment_part2}
	\end{align}
	The last inequality uses $\int_0^\infty q_r(x,y) \,\diff r = 2(x\wedge y) \le 2y$ (the potential kernel of Brownian motion killed at $0$) together with $\beta \ge 4/5$.
	
	\textit{The part $r\in[s/2,s]$.}
	Again using $q_r(x,y) \le C_1 x y r^{-3/2}$, we obtain
	$\int_0^\infty (y+1)^2 e^{(3-4\beta)y} q_r(x,y) \,\diff y \le C_3 x r^{-3/2}$.
	Consequently,
	\begin{equation} \label{eq:second_moment_part3}
	C_3 x e^{-x} \int_{s/2}^s \Bigl( (1-\beta)^2 + \frac{1}{(1-\beta)^2(s-r+1)} \Bigr) \frac{\diff r}{r^{3/2}}
	\le C_{\eqref{eq:second_moment_part3}} x e^{-x}
	\Bigl( \frac{(1-\beta)^2}{\sqrt{s}} + \frac{\log(s+1)}{(1-\beta)^2 s^{3/2}} \Bigr).
	\end{equation}
	
	Combining the estimates \eqref{eq:second_moment_part1}, \eqref{eq:second_moment_part2} and \eqref{eq:second_moment_part3} yields the desired bound \eqref{eq:2mom_0}.
\end{proof}

\subsection{Comparing martingale limits with finite-time quantities}

Instead of directly comparing $W_\infty(z)/(1-z)$ with $2Z_\infty$, we introduce suitable finite‑time quantities as an intermediate step.  
For any measurable function $f \colon \mathbb{R} \to \C$, any time $t \ge 0$, and any shift parameter $\gamma \in \mathbb{R}$, define
\begin{equation} \label{eq:def_Z_t(f,gamma)}
Z_t(f, \gamma) 
\coloneqq \sum_{u \in \mathcal{N}_t} (X_u(t) - \gamma)_+ \, e^{-X_u(t)} f\!\left( \frac{X_u(t) - \gamma}{\sqrt{t}} \right).
\end{equation}
We are particularly interested in the family of test functions indexed by $w \in \mathbb{C}$:
\begin{equation} \label{eq:def_F_w}
F_w(x) 
\coloneqq 2 e^{-w^2/2} \cdot \frac{\sinh(wx)}{wx}, \qquad x \in \mathbb{R}.
\end{equation}
For any interval $I \subset [0,\infty)$ and $\gamma \in \mathbb{R}$, we also introduce the event on which all particles stay above the level $\gamma$ over $I$:
\begin{equation} \label{eq:def_E}
E_{I,\gamma} \coloneqq \bigl\{ \forall s \in I,\; \forall u \in \mathcal{N}_s,\; X_u(s) > \gamma \bigr\}.
\end{equation}
Our first goal is to compare $W_\infty(z)/(1-z)$ with $Z_{At}(F_{\sqrt{A}w},\gamma)$ on suitable good events.

\begin{lem} \label{lem:concentration_W_infty}
	Let $\delta \in (0,1)$. There exists a constant $C_{\eqref{eq:concentration_W_infty}}>0$ (depending only on $\delta$) such that for any $t \geq 5\delta^{-2}$, $A>0$, $K>0$, $|\gamma| \leq \sqrt{t}$, and $z = 1 - \frac{w}{\sqrt{t}}$ with $w \in \{ a+ib : a\in [\delta,\delta^{-1}],\; |b| \leq a \}$, we have
	\begin{equation}\label{eq:concentration_W_infty}
	\E \left[ \1_{E_{[0,At],-K} \cap E_{[At,\infty),\gamma}} \left| \frac{W_\infty(z)}{1-z} - e^{(1-z)\gamma} Z_{At}(F_{\sqrt{A}w},\gamma) \right| \right]
	\leq \frac{C_{\eqref{eq:concentration_W_infty}} K^{1/2}}{t^{1/4} A^{1/4}} e^{-\gamma/2}.
	\end{equation}
\end{lem}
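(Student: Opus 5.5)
The plan is to condition on $\nf_{At}$, replace $W_\infty(z)$ on the event $E_{[At,\infty),\gamma}$ by the limit of the killed martingale $\widetilde W^{At,\gamma}_s(z)$, and show that its conditional mean is exactly $(1-z)e^{(1-z)\gamma}Z_{At}(F_{\sqrt A w},\gamma)$. The error is then a sum of conditionally independent centred terms, which I would control in $L^2$ with Lemma~\ref{lem:second_moment_0}; a conditional Cauchy--Schwarz and a many-to-one computation on $E_{[0,At],-K}$ finish the argument.

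\emph{Step 1 (reduction to the killed martingale).} On $E_{[At,\infty),\gamma}$ no particle goes below $\gamma$ after time $At$, so $W_s(z)=\widetilde W^{At,\gamma}_s(z)$ for every $s\ge At$. By the branching property at time $At$,
\[
\widetilde W^{At,\gamma}_{s}(z)=\sum_{u\in\calN_{At}} e^{-z\gamma-(1-z)^2At/2}\,\widetilde W^{(u)}_{s-At}(z),
\]
where, given $\nf_{At}$, the $\widetilde W^{(u)}$ are independent and distributed as $\widetilde W^{0,0}_{s-At}(z)$ under $\P_{y_u}$, with $y_u\coloneqq X_u(At)-\gamma$ (and they vanish if $y_u\le 0$). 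This holds because shifting the starting point by $\gamma$ multiplies $e^{-zX}$ by $e^{-z\gamma}$.

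By Lemma~\ref{lem:second_moment_0}, each $\widetilde W^{0,0}_s(z)$ is bounded in $L^2(\P_y)$ uniformly in $s$, so it converges a.s.\ and in $L^2$. Hence $W_\infty(z)=\widetilde W^{At,\gamma}_\infty(z)$ on $E_{[At,\infty),\gamma}$. Since $1-z\ne 0$, we can then drop the indicator of $E_{[At,\infty),\gamma}$, keeping only $\1_{E_{[0,At],-K}}$, which is $\nf_{At}$-measurable.

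\emph{Step 2 (conditional mean).} By Lemma~\ref{lem:first_moment_0}, \eqref{eq:CV_G} and $L^2$-convergence, $\E_y[\widetilde W^{0,0}_\infty(z)]=e^{-y}2\sinh((1-z)y)$. Substituting $1-z=w/\sqrt t$, we get $(1-z)^2At/2=Aw^2/2$ and $(1-z)y=\sqrt A w\,y/\sqrt{At}$, so
\[
\frac{1}{1-z}\E\bigl[\widetilde W^{At,\gamma}_\infty(z)\,\big|\,\nf_{At}\bigr]
=e^{(1-z)\gamma}\sum_{u} y_u e^{-X_u(At)}\,2e^{-Aw^2/2}\frac{\sinh(\sqrt A w\, y_u/\sqrt{At})}{\sqrt A w\, y_u/\sqrt{At}}
=e^{(1-z)\gamma}Z_{At}(F_{\sqrt A w},\gamma).
\]
So the quantity to bound is exactly the conditional fluctuation of $\widetilde W^{At,\gamma}_\infty(z)/(1-z)$.

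\emph{Step 3 (conditional variance).} By conditional independence and Fatou's lemma applied to \eqref{eq:2mom_0} as $s\to\infty$, which gives $\E_y|\widetilde W^{0,0}_\infty(z)|^2\le C e^{-y}(1-\beta)^2$, we obtain
\[
\E\Bigl[\Bigl|\tfrac{\widetilde W^{At,\gamma}_\infty(z)}{1-z}-e^{(1-z)\gamma}Z_{At}(F_{\sqrt A w},\gamma)\Bigr|^2\,\Big|\,\nf_{At}\Bigr]
\le \frac{C(1-\beta)^2}{|1-z|^2}\,\bigl|e^{-Aw^2}\bigr|\,e^{-2\beta\gamma}\sum_u e^{-y_u}\1_{y_u>0}.
\]
Here $|1-z|\ge 1-\beta$, and $|e^{-Aw^2}|=e^{-A(a^2-b^2)}\le 1$ because $|b|\le a$. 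The right-hand side is therefore at most $Ce^{(1-2\beta)\gamma}\sum_u e^{-X_u(At)}$.

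By conditional Cauchy--Schwarz and Jensen, the left side of \eqref{eq:concentration_W_infty} is at most
\[
\Bigl(Ce^{(1-2\beta)\gamma}\,\E\Bigl[\sum_{u\in\calN_{At}}e^{-X_u(At)}\ind{\forall s\le At,\,X_u(s)>-K}\Bigr]\Bigr)^{1/2}.
\]
By the many-to-one lemma \eqref{many-to-1}, the expectation equals $\P_0(\text{BM stays above } -K \text{ on } [0,At])\le CK/\sqrt{At}$. Finally, $e^{(1-\beta)\gamma}\le e^{\delta^{-1}|\gamma|/\sqrt t}\le e^{\delta^{-1}}$ since $|\gamma|\le\sqrt t$. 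Together these give $C K^{1/2}(At)^{-1/4}e^{-\gamma/2}$, as required.

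\emph{Main obstacle.} I expect the technical point to be the applicability of Lemmas~\ref{lem:first_moment_0} and~\ref{lem:second_moment_0}, namely the required range of $\beta$ and the passage to the limit $s\to\infty$ (including $L^2$-convergence of the killed martingale started from $y$). The condition $t\ge5\delta^{-2}$ only gives $1-\beta\le 1/\sqrt5$, not $\beta\ge 4/5$. I would first try to handle the remaining compact range of $\beta$ by rerunning the proof of Lemma~\ref{lem:second_moment_0} with the exponent $(3-4\beta)$ replaced by a bound valid for $\beta$ bounded away from $1/2$; the constant then depends on $\delta$, which the statement allows.
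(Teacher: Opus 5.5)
\textbf{Gap in Step 1.} You infer from the uniform $L^2(\P_y)$ bound of Lemma~\ref{lem:second_moment_0} that $\widetilde W^{0,0}_s(z)$ converges a.s.\ and in $L^2$. That inference needs a martingale, and $\widetilde W^{0,0}_s(z)$ is not one. By the branching property and Lemma~\ref{lem:first_moment_0}, $\E_y[\widetilde W^{0,0}_{s+h}(z)\mid\nf_s]=\sum_{u\in\calN_s}e^{-(1-z)^2s/2}e^{-X_u(s)}G(X_u(s),z,h)\1_{\{\forall r\le s,\,X_u(r)>0\}}$, and $G(x,z,h)\neq e^{(1-z)x}$ (it tends to $2\sinh((1-z)x)$). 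Since $z$ is complex, no supermartingale argument is available either.

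The step is load-bearing. Without an a.s.\ limit, $\widetilde W^{At,\gamma}_\infty(z)$ is undefined off $E_{[At,\infty),\gamma}$, so the conditional mean in Step 2 and the Fatou argument in Step 3 have nothing to act on. There are two easy repairs.

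(a) Under $\P_y$, the expected number of particles ever killed at $0$ is at most $e^{-y}$ (the bound quoted from \cite[Eq.~(3.33)]{MaiPai2021} in the proof of Corollary~\ref{cor:concentration_for_fluctu}). So a.s.\ only finitely many are killed, and $W_s(z)-\widetilde W^{0,0}_s(z)=\sum_{v:\sigma_v\le s}e^{-(1-z)^2\sigma_v/2}W^{(v)}_{s-\sigma_v}(z)$. Here $\sigma_v$ is the killing time and the $W^{(v)}$ are copies of the additive martingale, each converging a.s. Hence $\widetilde W^{0,0}_s(z)$ converges a.s., and $L^2$-boundedness gives uniform integrability and so $L^1$ convergence, which is all Step 2 needs. $L^2$ convergence does not follow, but you never use it.

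(b) Alternatively, never define the limit. Since $W_{At+s}(z)=\widetilde W^{At,\gamma}_{At+s}(z)$ on $E_{[At,\infty),\gamma}$ and $W_{At+s}(z)\to W_\infty(z)$ a.s., Fatou bounds the left side of \eqref{eq:concentration_W_infty} by the $\liminf_{s\to\infty}$ of the same quantity at time $At+s$. Run your Steps 2--3 at finite $s$ and let $s\to\infty$, using \eqref{eq:CV_G} with dominated convergence for the mean and \eqref{eq:2mom_0} for the variance.

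\textbf{Comparison with the paper.} With either repair your argument is correct, and it is a slightly leaner version of the paper's. The paper also works at finite $s$, but it splits into three terms. It handles $W_\infty(z)-\widetilde W^{At,\gamma}_{At+s}(z)$ through $L^1$ convergence of $W_s(z)$ (uniform integrability, including on $\cP_{1,2}$). For $s\ge At$ it bounds the conditional variance by a multiple of $Z_{At}(f,\gamma)/\sqrt{At}$ with $f(x)=x^{-1}+1$, and takes the barrier expectation from \cite[Eq.~(3.16)]{MaiPai2021}. By letting $s\to\infty$ inside \eqref{eq:2mom_0}, you are left with $\sum_u e^{-X_u(At)}$, whose expectation on $E_{[0,At],-K}$ is the one-line Brownian bound $CK/\sqrt{At}$. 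The final rate is the same.

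\textbf{The range of $\beta$.} You are right, and the paper's proof contains the same slip: $t\ge5\delta^{-2}$ and $a\le\delta^{-1}$ only give $1-\beta\le1/\sqrt5$, whereas $\beta\ge\frac45$ would need $t\ge25\delta^{-2}$. But your remedy as phrased does not suffice. The exponent $3-4\beta$ in the proof of Lemma~\ref{lem:second_moment_0} comes from the factor $e^{2(1-\beta)x}$ in \eqref{eq:bound_G}. For $\beta\le\frac34$, the bound $\int_0^\infty(1+y)^2e^{(3-4\beta)y}\,2y\,\diff y$ used there is infinite, however far $\beta$ is from $\frac12$.

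You must first sharpen \eqref{eq:bound_G}. The computation in the proof of Lemma~\ref{lem:first_moment_0} gives $\abs{G-G_0}\le e^{-(1-\beta)x+\tau^2s/2}$ for all $x>0$. This is at most $1$ when $x\ge(1-\beta)s/2$, using $\abs{\tau}\le1-\beta$. In that regime either $(1-\beta)\sqrt s\le1$ or $(1-\beta)x\ge\frac12$, so \eqref{eq:bound_G} holds, with a larger constant, with $e^{(1-\beta)x}$ in place of $e^{2(1-\beta)x}$. The exponent then becomes $1-2\beta$, and Lemma~\ref{lem:second_moment_0} holds for $\beta\ge\beta_0>\frac12$ with a constant depending on $\beta_0$. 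Alternatively, assume $t\ge25\delta^{-2}$. This suffices for every later use of the lemma once $t_K$ in Corollary~\ref{cor:concentration_for_as_cv} is raised accordingly.
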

\begin{proof}
	Write $z = \beta + i \tau$. Under our assumptions on $t$ and $w$, we have $\beta \in [\tfrac45,1)$ and $|\tau| \leq 1-\beta$. Let $s \geq 1$.
	Recall the definition of $\widetilde{W}_{At+s}^{At,\gamma}(z)$ in \eqref{eq:def_Wtilde}. By the triangle inequality,
	\begin{align}
	& \E \left[ \1_{E_{[0,At],-K} \cap E_{[At,\infty),\gamma}} \left| W_\infty(z) - (1-z)e^{(1-z)\gamma} Z_{At}(F_{\sqrt{A}w},\gamma) \right| \right] 
	\nonumber \\
	& \leq \E \left[ \1_{E_{[At,\infty),\gamma}} \left| W_\infty(z) - \widetilde{W}_{At+s}^{At,\gamma}(z) \right| \right] 
	\nonumber \\
	& \quad + \E \left[ \1_{E_{[0,At],-K}} \left| \widetilde{W}_{At+s}^{At,\gamma}(z) - \E \bigl[ \widetilde{W}_{At+s}^{At,\gamma}(z) \mid \mathcal{F}_{At} \bigr] \right| \right] 
	\nonumber \\
	& \quad + \E \left[ \left| \E \bigl[ \widetilde{W}_{At+s}^{At,\gamma}(z) \mid \mathcal{F}_{At} \bigr] - (1-z)e^{(1-z)\gamma} Z_{At}(F_{\sqrt{A}w},\gamma) \right| \right] 
	\label{eq:decompo_expec}
	\end{align}
	We will bound each of the three expectations on the right-hand side of \eqref{eq:decompo_expec} and then take the limit as $s\to\infty$ in each bound.
	
	\underline{Third term.}
	By the branching property at time $At$,
	\begin{align}
	\E \bigl[ \widetilde{W}_{At+s}^{At,\gamma}(z) \mid \mathcal{F}_{At} \bigr]
	& = \sum_{u\in\mathcal{N}_{At}} e^{-(1-z)^2 At/2}
	\E_{X_u(At)} \bigl[ \widetilde{W}_s^{0,\gamma}(z) \bigr] \nonumber \\
	& = \sum_{u\in\mathcal{N}_{At}} e^{-w^2 A/2 - z\gamma}
	\E_{X_u(At)-\gamma} \bigl[ \widetilde{W}_s^{0,0}(\beta_t) \bigr] \nonumber \\
	& = e^{(1-z)\gamma}
	\sum_{u\in\mathcal{N}_{At}} e^{-X_u(At)-w^2 A/2}
	G\bigl( X_u(At)-\gamma, z, s \bigr) \1_{\{X_u(At)>\gamma\}},
	\label{eq:1st_moment_given_At}
	\end{align}
	by Lemma \ref{lem:first_moment_0}. Using \eqref{eq:CV_G} to send $s\to\infty$,
	\begin{align*}
	\E \bigl[ \widetilde{W}_{At+s}^{At,\gamma}(z) \mid \mathcal{F}_{At} \bigr]
	\xrightarrow[s\to\infty]{\text{a.s.}}
	& e^{(1-z)\gamma}
	\sum_{u\in\mathcal{N}_{At}} e^{-X_u(At)-w^2 A/2}
	2\sinh\bigl( (1-z)(X_u(At)-\gamma) \bigr) \1_{\{X_u(At)>\gamma\}} \\
	& = (1-z)e^{(1-z)\gamma} Z_{At}(F_{\sqrt{A}w},\gamma),
	\end{align*}
	by the definitions \eqref{eq:def_Z_t(f,gamma)} and \eqref{eq:def_F_w}.
	From \eqref{eq:bound_G} we have $|G(x,z,s)| \leq \frac{8}{1-\beta}(x+1)e^{2(1-\beta)x}$, which provides a dominating function for the right-hand side of \eqref{eq:1st_moment_given_At}. Hence, by the dominated convergence theorem,
	\begin{equation}
	\limsup_{s\to\infty} \E \left[ \left| \E \bigl[ \widetilde{W}_{At+s}^{At,\gamma}(z) \mid \mathcal{F}_{At} \bigr] - (1-z)e^{(1-z)\gamma} Z_{At}(F_{\sqrt{A}w},\gamma) \right| \right] = 0
	\label{eq:concentration_W_3rd_term}.
	\end{equation}
	
	\underline{Second term.}
	Applying the conditional Cauchy–Schwarz inequality given $\mathcal{F}_{At}$,
	\begin{align}
	\E \left[ \1_{E_{[0,At],-K}} \left| \widetilde{W}_{At+s}^{At,\gamma}(z) - \E \bigl[ \widetilde{W}_{At+s}^{At,\gamma}(z) \mid \mathcal{F}_{At} \bigr] \right| \right] 
	\leq \E \left[ \1_{E_{[0,At],-K}} \operatorname{Var}\bigl( \widetilde{W}_{At+s}^{At,\gamma}(z) \mid \mathcal{F}_{At} \bigr)^{1/2} \right]. \label{eq:bound_2nd_term}
	\end{align}
	Using the branching property at time $At$ again,
	\begin{align}
	& \operatorname{Var}\bigl( \widetilde{W}_{At+s}^{At,\gamma}(z) \mid \mathcal{F}_{At} \bigr) \nonumber \\
	& = \sum_{u\in\mathcal{N}_{At}} \bigl| e^{-(1-z)^2 At/2 - z\gamma} \bigr|^2
	\operatorname{Var}_{X_u(At)-\gamma}\bigl( \widetilde{W}_s^{0,0}(z) \bigr) \nonumber \\
	& \leq C_{\eqref{eq:bound_variance}} e^{-\operatorname{Re}(w^2)A - (2\beta-1)\gamma} 
	\left( \frac{1}{t} + \frac{t}{s} \log(s+1) \right)
	\sum_{u\in\mathcal{N}_{At}} e^{-X_u(At)} 
	\left( 1 + \frac{X_u(At)-\gamma}{\sqrt{s}} \right) \1_{\{X_u(At)>\gamma\}}, \label{eq:bound_variance}
	\end{align}
	by Lemma \ref{lem:second_moment_0} and the fact that $(1-\beta)^2 = (\operatorname{Re} w)^2/t \in [\delta^2/t, \delta^{-2}/t]$.
	For any $s \geq At$, the sum on the right-hand side of \eqref{eq:bound_variance} is bounded by $Z_{At}(f,\gamma) / \sqrt{At}$ with $f(x) = (x^{-1}+1)\1_{x>0}$. Bounding $e^{-\operatorname{Re}(w^2)A} \leq 1$ and returning to \eqref{eq:bound_2nd_term}, we obtain
	\begin{align}
	\limsup_{s\to\infty} \E \left[ \1_{E_{[0,At],-K}} \left| \widetilde{W}_{At+s}^{At,\gamma}(z) - \E \bigl[ \widetilde{W}_{At+s}^{At,\gamma}(z) \mid \mathcal{F}_{At} \bigr] \right| \right] 
	& \leq \frac{C_{\eqref{eq:bound_variance}}^{1/2} e^{-(\beta-1/2)\gamma}}{t^{3/4} A^{1/4}} \E \left[ \1_{E_{[0,At],-K}} Z_{At}(f,\gamma)^{1/2} \right] 
	\nonumber \\
	& \leq \frac{C_{\eqref{eq:concentration_W_2nd_term}} e^{-\gamma/2 + \gamma \operatorname{Re} w/\sqrt{t}}}{t^{3/4} A^{1/4}} K^{1/2}
	\label{eq:concentration_W_2nd_term},
	\end{align}
	where we used $\E[\1_{E_{[0,At],-K}} Z_{At}(f,\gamma)^{1/2}] \leq \E[\1_{E_{[0,At],-K}} Z_{At}(f,\gamma)]^{1/2} \leq C_4 K^{1/2}$ by applying \cite[Eq.~(3.16)]{MaiPai2021}. Since $\gamma \operatorname{Re} w/\sqrt{t} \leq \delta^{-1}$ by our assumptions, the desired bound follows.
	
	\underline{First term.}
	On the event $E_{[At,\infty),\gamma}$, we have $\widetilde{W}_{At+s}^{At,\gamma}(z) = W_{At+s}(z)$. The point $z$ lies in the region where the additive martingale is uniformly integrable (recall $\beta \in [4/5,1)$ and $|\tau| \leq \beta$)\footnote{do we have $L^1$ convergence on the blue lines? Yes!}, so $W_{At+s}(z)$ converges in $L^1$ to $W_\infty(z)$ as $s\to\infty$. Consequently,
	\begin{equation}
	\limsup_{s\to\infty} \E \left[ \1_{E_{[At,\infty),\gamma}} \left| W_\infty(z) - \widetilde{W}_{At+s}^{At,\gamma}(z) \right| \right] = 0
	\label{eq:concentration_W_1st_term}.
	\end{equation}
	
	Returning to the decomposition \eqref{eq:decompo_expec}, using the bounds \eqref{eq:concentration_W_3rd_term}, \eqref{eq:concentration_W_2nd_term} and \eqref{eq:concentration_W_1st_term}, and multiplying by $|1-z|^{-1} \leq \delta^{-1}\sqrt{t}$, we obtain the desired inequality.
\end{proof}

In order to estimate $Z_t(F_w,\gamma)$, we first compute $\rho(F_w)$, recalling the definition of $\rho$ in \eqref{eq:cv_Z_t(f)}.

\begin{lem} \label{lem:rho(F_w)}
	For any $w\in \C$, $\rho(F_w) = 2$. 
\end{lem}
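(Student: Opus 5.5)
We compute $\rho(F_w)$ directly from the definition $\rho(f) = \int_0^\infty f(x)\sqrt{2/\pi}\,x^2 e^{-x^2/2}\,\diff x$ in \eqref{eq:cv_Z_t(f)}.

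Since $F_w(x) = 2e^{-w^2/2}\sinh(wx)/(wx)$, the factor $x$ in the denominator cancels one power of $x^2$. This gives
\[
\rho(F_w) = \frac{2e^{-w^2/2}}{w}\sqrt{\frac{2}{\pi}} \int_0^\infty x \sinh(wx)\, e^{-x^2/2}\,\diff x .
\]
The integrand $x\sinh(wx)e^{-x^2/2}$ is even in $x$, so the half-line integral equals $\frac12\int_\R$. Writing $\sinh(wx) = (e^{wx}-e^{-wx})/2$, we obtain
\[
\int_0^\infty x \sinh(wx)\, e^{-x^2/2}\,\diff x = \frac14 \int_\R x\left(e^{wx}-e^{-wx}\right)e^{-x^2/2}\,\diff x .
\]

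Next we use the Gaussian identity $\int_\R x e^{wx} e^{-x^2/2}\,\diff x/\sqrt{2\pi} = w e^{w^2/2}$. For real $w$ this is the derivative in $w$ of the moment generating function $e^{w^2/2}$. It extends to all $w\in\C$ because both sides are entire functions of $w$: the integral converges locally uniformly in $w$, so differentiation under the integral and analytic continuation are legitimate. Replacing $w$ by $-w$, the $e^{-wx}$ term contributes $-(-w)e^{w^2/2} = we^{w^2/2}$, so
\[
\int_\R x\left(e^{wx}-e^{-wx}\right)e^{-x^2/2}\,\frac{\diff x}{\sqrt{2\pi}} = 2w e^{w^2/2}.
\]
Hence
\[
\int_0^\infty x\sinh(wx)\,e^{-x^2/2}\,\diff x = \frac{\sqrt{2\pi}}{4}\cdot 2we^{w^2/2} = \sqrt{\frac{\pi}{2}}\, w e^{w^2/2}.
\]
Substituting back,
\[
\rho(F_w) = \frac{2e^{-w^2/2}}{w}\sqrt{\frac{2}{\pi}}\sqrt{\frac{\pi}{2}}\, we^{w^2/2} = 2.
\]

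The only point needing care is $w=0$. There $F_0 \equiv 2$, interpreting $\sinh(wx)/(wx)$ as $1$, which is the continuous extension. Since $\rho$ is a probability measure (the law of $R_1$ under $\P_0$), $\rho(F_0) = 2$ as well. Alternatively, $w\mapsto\rho(F_w)$ is continuous, or even entire, by dominated convergence, so the value at $w=0$ follows from the value for $w\neq 0$.

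The main obstacle is only to justify the complex Gaussian moment identity. This follows from analyticity in $w$ together with the identity theorem, or simply by completing the square.
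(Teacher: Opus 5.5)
Your proof is correct, but it takes a different route from the paper's.

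The paper expands $\sinh(wx)/(wx)=\sum_{k\ge 0}(wx)^{2k}/(2k+1)!$ and exchanges sum and integral by Fubini. It then evaluates the \emph{real} Gaussian moments $\int_0^\infty x^{2k+2}e^{-x^2/2}\,\diff x$ via $\Gamma(k+\tfrac32)$ and recognises $\sum_k (w^2/2)^k/k! = e^{w^2/2}$.

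You instead cancel one power of $x$, symmetrise to the whole line, split $\sinh$ into exponentials, and use the tilted Gaussian identity $\int_\R x e^{wx}e^{-x^2/2}\,\diff x/\sqrt{2\pi} = we^{w^2/2}$.

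\textbf{What your route buys.} It is shorter and avoids the Gamma-function bookkeeping.

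\textbf{What it costs.} Two extra points need care, and you handled both correctly:
\begin{enumerate}
\item The identity must be extended from real to complex $w$. Your argument via entire functions and the identity theorem does this rigorously. ``Completing the square'' on its own would implicitly require a contour shift.
\item Because you divide by $w$, the case $w=0$ needs separate treatment. Your observation that $\rho$ is a probability measure settles it.
\end{enumerate}

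\textbf{What the paper's route buys.} It needs only real moments and a routine Fubini justification: the terms are dominated by $\cosh(\abs{w}x)$, which is integrable against $x^2e^{-x^2/2}$. It therefore covers all $w\in\C$, including $w=0$, in a single computation.
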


\begin{proof} For any $w\in\C$ and $x\in\R_+$, 
	\begin{align*}
	\rho(F_w) 
 = \int_0^\infty 2 e^{-w^2/2} \frac{\sinh(wx)}{wx} 
	\cdot \sqrt{\frac{2}{\pi}} x^2 e^{-x^2/2} \diff x 
	& = 2 e^{-w^2/2} \int_0^\infty \sum_{k=0}^\infty \frac{(wx)^{2k}}{(2k+1)!} \cdot \sqrt{\frac{2}{\pi}} x^2 e^{-x^2/2} \diff x.
	\end{align*}
	Fubini--Lebesgue theorem implies that
	\begin{align*}
	\rho(F_w) 
	& =2 e^{-w^2/2} \sum_{k=0}^\infty  \frac{w^{2k}}{(2k+1)!}  \sqrt{\frac{2}{\pi}}\int_0^\infty x^{2k+2}e^{-x^2/2} \diff x \\
	& = 2 e^{-w^2/2} \sum_{k=0}^\infty \frac{w^{2k}}{(2k+1)!} \sqrt{\frac{2}{\pi}} \int_0^\infty (2r)^{k+\frac12} e^{-r } \diff r \quad \textrm{by change of variables } r = x^2/2\\
	& = 2 e^{-w^2/2} \sum_{k=0}^\infty \frac{w^{2k}}{(2k+1)!} \sqrt{\frac{1}{\pi}}  2^{k+1}\Gamma \left(k+ \frac32 \right),
	\end{align*}
	where $\Gamma$ is the usual Gamma function. As $\Gamma(k+\frac32) = (k+\frac12)(k-\frac12)\cdots \frac12\Gamma(\frac12) =
	\frac{(2k+1)!}{k! \cdot 2^{2k+1}} \sqrt{\pi}$,
	we obtain that
	\begin{align*}
	\rho(F_w) 
	&= 2 e^{-w^2/2} \sum_{k=0}^\infty \frac{(w^2/2)^k }{k!} = 2,
	\end{align*}
	which concludes the proof.
\end{proof}

We now establish a concentration estimate for $Z_t(F_w,\gamma)$, comparing it with the shifted derivative martingale $Z_t(1,\gamma)$ up to the multiplicative factor $\rho(F_w) = 2$.

\begin{lem} \label{lem:concentration_Z_t(F_w)}
	Let $\mathcal{K} \subset \mathbb{C}$ be a compact set. There exists a constant $C_{\eqref{eq:concentration_Z_t(F_w)}}>0$ (depending only on $\mathcal{K}$) such that for any $t \geq 1$, $K>0$, $\gamma \in \mathbb{R}$, and $w \in \mathcal{K}$,
	\begin{equation} \label{eq:concentration_Z_t(F_w)}
	\E \left[ \1_{E_{[0,\sqrt{t}],-K} \cap E_{[\sqrt{t},\infty),\gamma}} \bigl| Z_t(F_w,\gamma) - 2Z_t(1,\gamma) \bigr| \right]
	\leq \frac{C_{\eqref{eq:concentration_Z_t(F_w)}}}{t^{1/4}} \left( K + K^{1/2} e^{-\gamma/2} \right).
	\end{equation}
\end{lem}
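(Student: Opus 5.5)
We follow the same scheme as in Lemma~\ref{lem:concentration_W_infty}: condition on $\mathcal{F}_{\sqrt{t}}$ and compare $Z_t(F_w,\gamma)$ and $2Z_t(1,\gamma)$ through the BBM started from the particles alive at time $\sqrt{t}$ and killed below $\gamma$. On $E_{[\sqrt{t},\infty),\gamma}$ no particle is killed between times $\sqrt{t}$ and $t$. Hence $Z_t(F_w,\gamma)$ and $Z_t(1,\gamma)$ coincide there with their killed versions
\[
\widetilde Z_t(f) \coloneqq \sum_{u\in\calN_t}(X_u(t)-\gamma)e^{-X_u(t)} f\bigl((X_u(t)-\gamma)/\sqrt{t}\bigr)\ind{\forall r\in[\sqrt{t},t],\,X_u(r)>\gamma}
\]
for $f=F_w$ and $f=1$.

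We write $D \coloneqq \widetilde Z_t(F_w)-2\widetilde Z_t(1)=\widetilde Z_t(F_w-2)$. The triangle inequality gives
\[
\E\bigl[\1_{E_{[0,\sqrt t],-K}}\abs{D}\bigr] \le \E\bigl[\1_{E_{[0,\sqrt t],-K}}\abs{\E[D\mid\mathcal{F}_{\sqrt t}]}\bigr] + \E\bigl[\1_{E_{[0,\sqrt t],-K}}\operatorname{Var}(D\mid\mathcal{F}_{\sqrt t})^{1/2}\bigr].
\]
\emph{Conditional mean.} By the branching property and \eqref{M-t-1}, with $x_u \coloneqq X_u(\sqrt t)-\gamma>0$ and $s \coloneqq t-\sqrt t$, we get
\[
\E[D\mid\mathcal{F}_{\sqrt t}] = \sum_{u} e^{-X_u(\sqrt t)}\, x_u\, \E_{x_u}\bigl[(F_w-2)(R_s/\sqrt t)\bigr].
\]
Since $\E_0[(F_w-2)(R_1)] = \rho(F_w)-2=0$ by Lemma~\ref{lem:rho(F_w)}, the plan is to bound $\abs{\E_{x}[g(R_s/\sqrt t)]} \le C(x+1)/\sqrt t$ for $g=F_w-2$. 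This uses three facts: $g$ is smooth and has at most Gaussian-subcritical growth (indeed $\abs{F_w(y)}\le Ce^{C y}$ uniformly on $\mathcal{K}$); $R_s$ under $\P_x$ is close to $\sqrt{s}R_1$ under $\P_0$ with an $O(x)$ error (coupling Bessel processes, or differentiating the density \eqref{densityRt} in $x$); and $s/t = 1-O(t^{-1/2})$. This produces the bound
\[
C t^{-1/2}\sum_u e^{-X_u(\sqrt t)}\bigl(x_u+x_u^2\bigr)\1_{x_u>0}.
\]
On $E_{[0,\sqrt t],-K}$, we control its expectation by \cite[Eq.~(3.16)]{MaiPai2021}-type bounds (many-to-one with a Bessel process started from $K$ and run for time $\sqrt t$). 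For $\gamma\ge -K$ we write $x_u\le X_u(\sqrt t)+K$, and for $\gamma<-K$ we use the same barrier estimate; we expect the first moment to be at most $CK\, t^{1/4}$. Since $x^2\approx t^{1/2}$ at time $\sqrt t$, the total is $\lesssim K t^{-1/4}$, which gives the $K$ term.

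\emph{Conditional variance.} We use the many-to-two formula \eqref{many-to-2} for the BBM killed at $\gamma$, started at each $x_u$, with $\varphi(y)=y(F_w-2)(y/\sqrt t)$. The first-moment bound just obtained, now applied at intermediate times, controls the inner squared expectations. As in Lemma~\ref{lem:second_moment_0}, the aim is to show
\[
\operatorname{Var}_x \le C e^{-x}\bigl(1+x/\sqrt t\bigr)\, e^{-\gamma}\, t^{-1/2}\cdot(\text{polynomial in } x),
\]
where the factor $e^{-\gamma}$ comes from shifting $e^{-X}$ to $e^{-(X-\gamma)}$. Summing over $u$ and applying conditional Cauchy--Schwarz, we obtain $\E[\1_{E}\,(\cdot)^{1/2}]\le C e^{-\gamma/2}t^{-1/4}\E[\1_E Z_{\sqrt t}(f,\gamma)]^{1/2}\le Ce^{-\gamma/2}K^{1/2}t^{-1/4}$, exactly as in \eqref{eq:concentration_W_2nd_term}.

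The main obstacle is the conditional-mean step: it needs a quantitative, uniform-in-$w\in\mathcal{K}$ estimate of $\E_x[g(R_s/\sqrt t)]-\E_0[g(R_1)]$ that is linear in $x/\sqrt t$ and valid for all $x$. The bound must also absorb the exponential growth of $F_w$ for complex $w$, and it must combine with the barrier moments so that the result is $K$ rather than a higher power of $K$. For this we would first try the explicit density \eqref{densityRt} and a Taylor expansion in $x/\sqrt s$ directly.
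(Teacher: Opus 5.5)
Your architecture is the paper's. You pass to the killed functional on $E_{[\sqrt t,\infty),\gamma}$, split into conditional mean and conditional standard deviation given $\mathcal F_{\sqrt t}$, use $\rho(F_w)=2$, and close with barrier first moments. The difference is that the paper does not derive the two conditional bounds. It imports them from \cite[Lemma~4.1]{MaiPai2021}, with $\kappa$ chosen depending on $\mathcal K$ so that every $F_w$ satisfies (A1$_\kappa$)--(A2$_\kappa$), as $Ct^{-1/4}Z_{\sqrt t}(x\mapsto x^2e^{\kappa x}+1,\gamma)$ and $Ct^{-1/4}e^{-\gamma/2}Z_{\sqrt t}(x\mapsto x^{-1}+1,\gamma)^{1/2}$. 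It then bounds their expectations by \cite[Eq.~(3.16)]{MaiPai2021}. Your by-hand replacement is where the gap lies.

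\emph{Conditional mean.} You leave this estimate open, and the form you aim for is false when $\re w\neq0$. That form is $\abs{\E_x[(F_w-2)(R_s/\sqrt t)]}\le C(x+1)/\sqrt t$ for all $x$. For real $w>0$ and $x\gg\sqrt t$, the left side grows like $e^{wx/\sqrt t}$ up to polynomial factors, and $x_u$ is unbounded. What does hold is a bound $C\bigl(t^{-1/2}+\tfrac{x^2}{t}e^{\kappa x/\sqrt t}\bigr)$. It follows from \eqref{densityRt}, which is even in $x$, together with $1-\sqrt{s/t}=O(t^{-1/2})$. This exponential factor is exactly why the paper's test function is $x^2e^{\kappa x}+1$, and you must carry it into the barrier moment.

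\emph{Conditional variance.} Applying ``the first-moment bound at intermediate times'' fails for branching times $r$ close to $t$. There the remaining time is short, and the inner expectation is roughly $(F_w-2)(y/\sqrt t)$, which is of order one. That range must be handled through $q_r(x,y)\le Cxyr^{-3/2}$, as in the $[s/2,s]$ part of Lemma~\ref{lem:second_moment_0}.

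\emph{$K$-dependence.} Using $x_u\le X_u(\sqrt t)+K$ gives the barrier moment bound $K(1+\E_K[R_{\sqrt t}])\ge K^2$. This is not $O(Kt^{1/4})$ when $K\gg t^{1/4}$. For $\gamma=0$, keep the $h$-transform weight instead. With $R=R_{\sqrt t}$ under $\P_K$, the moment equals
$K\,\E_K\bigl[((R-K)_++(R-K)_+^2)/R\bigr]\le K\bigl(1+\E_K[(R-K)_+]\bigr)\le CK(1+t^{1/4}).$

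\emph{$\gamma<-K$.} Your ``same barrier estimate'' for this case cannot work, because no bound uniform as $\gamma\to-\infty$ exists. Take $\mathcal K=\{1\}$, $t=K=1$ and $\gamma=-M$ with $M\ge2$.
\begin{itemize}
\item On the event, every $u\in\calN_1$ has $X_u(1)>-1$.
\item For $x>-1$, $(x+M)e^{-x}F_1(x+M)=2e^{-1/2}e^{-x}\sinh(x+M)\ge e^{-1/2}(e^M-e^{2-M})$, while $2(x+M)e^{-x}\le 2e(M-1)$.
\item The event intersected with $\{\calN_1\neq\emptyset\}$ keeps probability bounded below as $M\to\infty$.
\end{itemize}
So the left side of \eqref{eq:concentration_Z_t(F_w)} is at least $ce^{M}$, while the right side is $O(e^{M/2})$. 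The lemma can therefore only hold for $\gamma$ bounded below. The paper only ever uses $\gamma=0$, and that is the case your argument should target.
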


\begin{proof}
	This follows mainly from \cite[Lemma~4.1]{MaiPai2021}. In their notation, set
	\[
	\widetilde{Z}_t^{\sqrt{t},\gamma}(F_w,\gamma)
	\coloneqq \sum_{u\in\mathcal{N}_t} (X_u(t)-\gamma)_+ e^{-X_u(t)} 
	F_w\!\left( \frac{X_u(t) - \gamma}{\sqrt{t}} \right)
	\1_{\{\forall r \in [\sqrt{t},t],\; X_u(r) > \gamma\}}.
	\]
	Then, on the event $E_{[\sqrt{t},\infty),\gamma}$, we have $Z_t(F_w,\gamma) - 2Z_t(1,\gamma) = \widetilde{Z}_t^{\sqrt{t},\gamma}(F_w-\rho(F_w),\gamma)$, where we will see later in Lemma~\ref{lem:rho(F_w)} that $\rho(F_w)=2$. Consequently, the left-hand side of \eqref{eq:concentration_Z_t(F_w)} is bounded by
	\begin{align}
	&\E \left[ \1_{E_{[0,\sqrt{t}],-K}} \bigl| \widetilde{Z}_t^{\sqrt{t},\gamma}(F_w-\rho(F_w),\gamma) \bigr| \right] \nonumber \\
	&\leq \E \left[ \1_{E_{[0,\sqrt{t}],-K}} \left| \E \bigl[ \widetilde{Z}_t^{\sqrt{t},\gamma}(F_w-\rho(F_w),\gamma) \mid \mathcal{F}_{\sqrt{t}} \bigr] \right| \right] \nonumber \\
	&\quad + \E \left[ \1_{E_{[0,\sqrt{t}],-K}} \operatorname{Var}\bigl( \widetilde{Z}_t^{\sqrt{t},\gamma}(F_w-\rho(F_w),\gamma) \mid \mathcal{F}_{\sqrt{t}} \bigr)^{1/2} \right] \nonumber \\
	&\leq \frac{C_\eqref{eq:bound_Z_t(F_w)}}{t^{1/4}} \E \left[ \1_{E_{[0,\sqrt{t}],-K}} Z_{\sqrt{t}} \bigl( x \mapsto x^2 e^{\kappa x}+1,\; \gamma \bigr) \right] \nonumber \\
	&\quad + \frac{C_\eqref{eq:bound_Z_t(F_w)} e^{-\gamma/2}}{t^{1/4}} \E \left[ \1_{E_{[0,\sqrt{t}],-K}} Z_{\sqrt{t}} \bigl( x \mapsto x^{-1}+1,\; \gamma \bigr)^{1/2} \right],
	\label{eq:bound_Z_t(F_w)}
	\end{align}
	where we applied \cite[Lemma~4.1]{MaiPai2021} with $\kappa>0$ chosen large enough (depending on $\mathcal{K}$) so that every $F_w$ ($w\in\mathcal{K}$) satisfies the assumptions (A1$_\kappa$)–(A2$_\kappa$) therein.
	Finally, by \cite[Eq.~(3.16)]{MaiPai2021}, the two expectations on the right-hand side of \eqref{eq:bound_Z_t(F_w)} are bounded by $C_5K$ and $C_5K^{1/2}$, respectively. This concludes the proof.
\end{proof}

Finally, we compare $Z_t(1,\gamma)$ with $Z_\infty$ in the following lemma.

\begin{lem} \label{lem:concentration_Z_infty}
	There exists a constant $C_{\eqref{eq:concentration_Z_infty}}>0$ such that for any $t \ge 1$, $K>0$, and $\gamma \in \mathbb{R}$,
	\begin{equation}\label{eq:concentration_Z_infty}
	\E \left[ \1_{E_{[0,t],-K} \cap E_{[t,\infty),\gamma}} \bigl| Z_\infty - Z_t(1,\gamma) \bigr| \right]
	\le \frac{C_{\eqref{eq:concentration_Z_infty}} K^{1/2}}{t^{1/4}} e^{-\gamma/2}.
	\end{equation}
\end{lem}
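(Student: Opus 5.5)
The approach mirrors the proof of Lemma~\ref{lem:concentration_W_infty}, with the additive martingale replaced by the derivative martingale of the BBM killed below $\gamma$. For $s \geq t$ and $y>0$, I will use the truncated quantity
\[
\widetilde Z_s^{t,\gamma} \coloneqq \sum_{u\in\calN_s} (X_u(s)-\gamma)\, e^{-X_u(s)} \ind{\forall r\in[t,s],\, X_u(r)>\gamma}.
\]
Two facts drive the argument. First, by the many-to-one formula \eqref{M-t-1}, $\E_y[\sum_{u} (X_u(s)-\gamma) e^{-X_u(s)} \1_{\cdots}] = (y-\gamma)e^{-y}$ for $y>\gamma$, since $(B-\gamma)$ killed at $0$ is a martingale. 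Second, the branching property at time $t$ then gives $\E[\widetilde Z_{t+s}^{t,\gamma}\mid\nf_t] = Z_t(1,\gamma)$ exactly; on $E_{[t,\infty),\gamma}$ this conditional mean restricts to particles above $\gamma$, which is precisely $Z_t(1,\gamma)$. So, unlike in Lemma~\ref{lem:concentration_W_infty}, there is no analogue of the third term.

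I will split the error by the triangle inequality into two pieces:
\[
\E\bigl[\1_{E_{[t,\infty),\gamma}}\abs{Z_\infty - \widetilde Z_{t+s}^{t,\gamma}}\bigr] \quad\text{and}\quad \E\bigl[\1_{E_{[0,t],-K}}\abs{\widetilde Z_{t+s}^{t,\gamma} - Z_t(1,\gamma)}\bigr].
\]

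\textbf{First piece.} On $E_{[t,\infty),\gamma}$ we have $\widetilde Z_{t+s}^{t,\gamma} = Z_{t+s} - \gamma W_{t+s}(1)$. Since $Z_{t+s}\to Z_\infty$ and $W_{t+s}(1)\to 0$ almost surely, the first piece tends to $0$ almost surely. To pass the limit through the expectation, I will show the second moment of $\widetilde Z_{t+s}^{t,\gamma}$ given $\nf_t$ is bounded uniformly in $s$ (see the next step), which yields uniform integrability. A cleaner alternative is Fatou's lemma: the event is contained in $\{\widetilde Z = Z_{t+s}-\gamma W_{t+s}(1)\}$, so $\abs{Z_\infty - Z_t(1,\gamma)}\1 \le \liminf_s \abs{\widetilde Z_{t+s}^{t,\gamma}-Z_t(1,\gamma)}$. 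Taking $\liminf$ of the second piece then controls everything, and the first piece disappears.

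\textbf{Second piece.} I will use conditional Cauchy--Schwarz, as in \eqref{eq:bound_2nd_term}, and compute $\operatorname{Var}(\widetilde Z_{t+s}^{t,\gamma}\mid\nf_t)$ via the branching property and the many-to-two formula \eqref{many-to-2}, applied after shifting by $\gamma$. This amounts to bounding $\E_x[(\widetilde Z_s^{0,0})^2]$ for the BBM killed at $0$. Plugging the first moment $y e^{-y}$ into \eqref{many-to-2}, the first term becomes
\[
e^{-x}\int_0^s\!\int_0^\infty y^2 e^{-y} q_r(x,y)\,\diff y\,\diff r \le C e^{-x}\int_0^\infty y^3 e^{-y}\,\diff y \le C e^{-x},
\]
using $\int_0^\infty q_r(x,y)\,\diff r = 2(x\wedge y)$. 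The second term, $e^{-x}\int y^2 e^{-y}q_s(x,y)\,\diff y$, is also at most $Ce^{-x}$. Hence the conditional variance is at most
\[
C e^{-2\gamma}\sum_{u\in\calN_t} e^{-(X_u(t)-\gamma)}\ind{X_u(t)>\gamma}\, e^{-\gamma} = C e^{-\gamma}\sum_{u\in\calN_t} e^{-X_u(t)}\ind{X_u(t)>\gamma}
\]
(adjusting the exponents from $e^{-X}(X-\gamma)$), and this is uniform in $s$. The remaining sum is bounded by $t^{-1/2} Z_t(x\mapsto x^{-1},\gamma)$. Therefore
\[
\E\bigl[\1_{E_{[0,t],-K}}\operatorname{Var}^{1/2}\bigr] \le C e^{-\gamma/2} t^{-1/4}\,\E\bigl[\1_{E_{[0,t],-K}} Z_t(x^{-1}+1,\gamma)\bigr]^{1/2} \le C K^{1/2} e^{-\gamma/2} t^{-1/4},
\]
by \cite[Eq.~(3.16)]{MaiPai2021}, exactly as in \eqref{eq:concentration_W_2nd_term}. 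The constant is independent of $s$, so letting $s\to\infty$ (via the Fatou route) gives the lemma.

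The main obstacle is the second-moment bound for the killed derivative martingale. I must keep track of the $\gamma$-shift and confirm that the exponential weights give exactly the factor $e^{-\gamma/2}$ after the square root. I also need to check that the $\sqrt t$ gain really comes from $\sum e^{-X} \le t^{-1/2}Z_t(x^{-1},\gamma)$, which is valid since the test function $x^{-1}+1$ is allowed in \cite[Eq.~(3.16)]{MaiPai2021}.
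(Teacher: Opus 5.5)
Your proposal is correct and is essentially the paper's proof. It uses the same killed derivative martingale $\widetilde Z^{t,\gamma}_s$, whose conditional mean given $\nf_t$ is exactly $Z_t(1,\gamma)$, the same conditional Cauchy--Schwarz step with $\operatorname{Var}(\widetilde Z^{t,\gamma}_{s}\mid\nf_t)\le Ce^{-\gamma}W_t(1)$, and \cite[Eq.~(3.16)]{MaiPai2021} at the end. The differences are harmless: you pass to the limit $s\to\infty$ with Fatou's lemma (using $Z_s\to Z_\infty$ and $W_s(1)\to 0$ a.s.) instead of the uniform integrability from \cite[Theorem~13(ii)]{Kyp2004}, and you derive the second-moment bound directly from \eqref{many-to-2} (extended to $\varphi(y)=y$ by monotone convergence) instead of citing \cite[Eq.~(5.4)]{MaiPai2019}.

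One small fix: drop the superfluous ``$+1$'' in $Z_t(x^{-1}+1,\gamma)$. Use instead $\sum_u e^{-X_u(t)}\1_{\{X_u(t)>\gamma\}}\le W_t(1)$ together with $\E[\1_{E_{[0,t],-K}}W_t(1)]\le CK/\sqrt t$, which follows from many-to-one. The extra term $\sum_u (X_u(t)-\gamma)_+e^{-X_u(t)}$ has expectation on $E_{[0,t],-K}$ growing linearly in $\abs{\gamma}$ as $\gamma\to-\infty$, so keeping it would break the uniformity over all $\gamma\in\R$ claimed in the lemma.
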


\begin{proof}
	This follows from arguments in \cite{MaiPai2019}. For $s \ge t$, set
	\[
	\widetilde{Z}_s^{t,\gamma}
	\coloneqq \sum_{u\in\mathcal{N}_s} (X_u(s)-\gamma)_+ e^{-X_u(s)} \1_{\{\forall r \in [t,s],\; X_u(r) > \gamma\}}.
	\]
	By \cite[Theorem~9]{Kyp2004}, $(\widetilde{Z}_s^{t,\gamma})_{s\ge t}$ is a nonnegative martingale whose limit is denoted by $\widetilde{Z}_\infty^{t,\gamma}$.
	Note that $Z_t(1,\gamma) = \widetilde{Z}_t^{t,\gamma}$ and, on the event $E_{[t,\infty),\gamma}$, we have $Z_\infty = \widetilde{Z}_\infty^{t,\gamma}$.
	Therefore, for any $s \ge t$,
	\begin{equation} \label{eq:decompo_Z}
	\E \left[ \1_{E_{[0,t],-K} \cap E_{[t,\infty),\gamma}} \bigl| Z_\infty - Z_t(1,\gamma) \bigr| \right]
	\le \E \left[ \bigl| \widetilde{Z}_\infty^{t,\gamma} - \widetilde{Z}_s^{t,\gamma} \bigr| \right]
	+ \E \left[ \1_{E_{[0,t],-K}} \bigl| \widetilde{Z}_s^{t,\gamma} - \widetilde{Z}_t^{t,\gamma} \bigr| \right].
	\end{equation}
	By \cite[Theorem~13(ii)]{Kyp2004}, $(\widetilde{Z}_s^{t,\gamma})_{s\ge t}$ is uniformly integrable, so the first term on the right‑hand side of \eqref{eq:decompo_Z} vanishes as $s\to\infty$.
	Using the martingale property, the second term equals
	\begin{equation*}
	\E \left[ \1_{E_{[0,t],-K}} \bigl| \widetilde{Z}_s^{t,\gamma} - \E \bigl[ \widetilde{Z}_s^{t,\gamma} \mid \mathcal{F}_t \bigr] \bigr| \right]
	\le \E \left[ \1_{E_{[0,t],-K}} \operatorname{Var}\bigl( \widetilde{Z}_s^{t,\gamma} \mid \mathcal{F}_t \bigr)^{1/2} \right]
	\le \E \left[ \1_{E_{[0,t],-K}} \bigl( C_5 e^{-\gamma} W_t(1) \bigr)^{1/2} \right],
	\end{equation*}
	by \cite[Eq.\@ (5.4)]{MaiPai2019}%
	\footnote{In that reference, the bound is stated for a truncated version of the martingale because the reproduction law may have infinite second moment. Under our finite second moment assumption, the same bound holds without truncation; the conclusion of \cite[Lemma~4.4]{MaiPai2019} becomes $\E[(\widetilde{Z}_s^{t,\gamma})^2] \le C e^{x}$ with the same proof. Alternatively, one could rely on \cite[Lemma~3.4]{MaiPai2021} and proceed as in \eqref{eq:bound_variance}.}
	By \cite[Eq.~(3.16)]{MaiPai2021}, we have $\E\bigl[ \1_{E_{[0,t],-K}} W_t(1)^{1/2} \bigr] \le C_4 K^{1/2}/t^{1/4}$. Letting $s\to\infty$ in \eqref{eq:decompo_Z} yields the desired inequality.
\end{proof}

\section{Almost sure convergence: proof of Theorem~\ref{thm:as}}
\label{sec:as}

This section is devoted to the proof of Theorem~\ref{thm:as}, which states that almost surely, $W_\infty(z)/(1-z)$ converges to $2Z_\infty$ as $z \to 1$ non-tangentially in $\cP_1$.
To this end, we introduce a family of good events indexed by a parameter $K>0$:
\begin{equation} \label{eq:def_G_K}
G_K \coloneqq E_{[0,K],-K} \cap E_{[K,\infty),0},
\end{equation}
where the events $E_{I,\gamma}$ are defined in \eqref{eq:def_E}.
It is well known that $\min_{u\in\mathcal{N}_t} X_u(t) \to \infty$ almost surely. This was first stated in \cite[Eq.\@ (20)]{LalSel1987} with a somewhat unconvincing argument, but it follows directly from the fact, proved by \cite{Nev1988}, that $W_t(1)$ converges to $0$ almost surely.
Consequently,
\begin{equation} \label{eq:proba_G_K}
\P(G_K) \xrightarrow[K\to\infty]{} 1.
\end{equation}
We first establish a concentration result for $W_\infty(z)/(1-z)$ around its limit $2Z_\infty$ as $z\to 1$, in terms of a first-moment bound on the good event $G_K$. This is a direct consequence of the lemmas established in Section~\ref{sec:preliminary}.

\begin{cor} \label{cor:concentration_for_as_cv}
	There exists a constant $C_{\eqref{eq:concentration_for_as_cv}}>0$ such that for any $K\geq 1$, $t \geq t_K:=100 \vee K^2$ and $z = 1 - \frac{w}{\sqrt{t}}$ with $w \in \{ a+ib : a\in [1/4,4],\; |b| \leq a \}$, we have
	\begin{equation} \label{eq:concentration_for_as_cv}
	\E \left[ \1_{G_K} \left| \frac{W_\infty(z)}{1-z} - 2 Z_\infty \right| \right]
	\leq \frac{C_{\eqref{eq:concentration_for_as_cv}} K}{t^{1/4}}.
	\end{equation}
\end{cor}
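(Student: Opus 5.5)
The plan is to chain the three concentration lemmas of Section~\ref{sec:preliminary} with $A=1$, $\gamma=0$ and $\delta=1/4$, and to split by the triangle inequality:
\[
\left|\frac{W_\infty(z)}{1-z}-2Z_\infty\right|
\le \left|\frac{W_\infty(z)}{1-z}-Z_t(F_w,0)\right|
+\bigl|Z_t(F_w,0)-2Z_t(1,0)\bigr|
+2\bigl|Z_t(1,0)-Z_\infty\bigr|.
\]
With $A=1$ and $\gamma=0$, the prefactor $e^{(1-z)\gamma}$ in Lemma~\ref{lem:concentration_W_infty} equals $1$, and $F_{\sqrt{A}w}=F_w$.

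The first step is to check that each term may be multiplied by $\1_{G_K}$ and bounded using the events appearing in the lemmas. Since $t\ge t_K\ge K^2$, we have $\sqrt{t}\ge K$, and hence $t\ge \sqrt t\ge K$ because $t\ge 100$. On $G_K=E_{[0,K],-K}\cap E_{[K,\infty),0}$, every particle stays above $-K$ on $[0,K]$ and above $0\ge -K$ on $[K,\infty)$. Consequently $G_K\subset E_{[0,T],-K}\cap E_{[T,\infty),0}$ for every $T\ge K$. This inclusion is used with $T=t$ for Lemmas~\ref{lem:concentration_W_infty} and \ref{lem:concentration_Z_infty}, and with $T=\sqrt t$ for Lemma~\ref{lem:concentration_Z_t(F_w)}.

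The next step is to verify the parameter conditions. For Lemma~\ref{lem:concentration_W_infty} with $\delta=1/4$, we need $t\ge 5\delta^{-2}=80$, which holds since $t\ge 100$. We also need $|\gamma|=0\le\sqrt t$ and $w\in\{a+ib: a\in[1/4,4],|b|\le a\}$, which is exactly the assumption on $w$. For Lemma~\ref{lem:concentration_Z_t(F_w)}, we take $\mathcal K$ to be this same compact set of $w$'s. The constants then depend only on these fixed choices.

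The three resulting bounds are $CK^{1/2}t^{-1/4}$, $Ct^{-1/4}(K+K^{1/2})$ and $2CK^{1/2}t^{-1/4}$. Since $K\ge1$ gives $K^{1/2}\le K$, their sum is at most $C K t^{-1/4}$. I expect no real obstacle in this argument. The only delicate point is the event inclusion, specifically ensuring the barrier time $\sqrt t$ in Lemma~\ref{lem:concentration_Z_t(F_w)} is at least $K$, and this is exactly why the condition $t\ge K^2$ appears in $t_K$.
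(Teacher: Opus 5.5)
Your proposal is correct and follows essentially the same route as the paper. The paper uses the same three-term triangle inequality with $A=1$ and $\gamma=0$, and then applies Lemmas~\ref{lem:concentration_W_infty}, \ref{lem:concentration_Z_t(F_w)} and \ref{lem:concentration_Z_infty}. Your explicit check that $G_K\subset E_{[0,T],-K}\cap E_{[T,\infty),0}$ for every $T\ge K$, used with $T=t$ and $T=\sqrt t$, spells out a step the paper leaves implicit. It also shows why the condition $t\ge K^2$ appears in $t_K$.
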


\begin{proof}
	By the triangle inequality, the left-hand side of \eqref{eq:concentration_for_as_cv} is at most
	\begin{align*} 
	\E \left[ \1_{G_K} \abs{\frac{W_\infty(z)}{1-z} - Z_t(F_w,0)} \right]
	+ \E \left[ \1_{G_K} \abs{Z_t(F_w,0) - 2 Z_t(1,0)} \right] 
	+ 2 \cdot \E \left[ \1_{G_K} \abs{Z_t(1,0) - Z_\infty} \right].
	\end{align*}
	The result follows by applying, with $\gamma = 0$, Lemma~\ref{lem:concentration_W_infty} to the 1st term, Lemma~\ref{lem:concentration_Z_t(F_w)} to the 2nd term and Lemma~\ref{lem:concentration_Z_infty} to the 3rd term.
\end{proof}

As a direct consequence, we obtain Proposition~\ref{prop:inprob}.

\begin{proof}[Proof of Proposition~\ref{prop:inprob}]
	The result follows from Markov's inequality, Corollary~\ref{cor:concentration_for_as_cv} and \eqref{eq:proba_G_K}.
\end{proof}

Now we are ready to prove the almost sure convergence in Theorem~\ref{thm:as}.

\begin{proof}[Proof of Theorem~\ref{thm:as}]
	Fix $\theta\in(0,1)$ and recall we aim at proving the almost sure convergence of $\frac{W_\infty(z)}{1-z}$ with $z = \beta+i\tau$ as $\beta \nearrow 1$ uniformly in $\abs{\tau} \leq \theta (1-\beta)$. For any $t \geq 100$, we introduce the set
	\[
	A_t \coloneqq \left\{ 1-\frac{a+i b}{\sqrt{t}} : a \in \left[1, 2\right],\; \abs{b} \le \theta a \right\}.
	\]
	The idea is to cover the cone $\{ z=\beta+i\tau : \beta < 1, \abs{\tau} \leq \theta (1-\beta) \}$ in the neighborhood of 1 by the union of the sets $A_{2^k}$ for all $k$ large enough.
	We also consider a counterclockwise oriented contour $\cC_t$ surrounding $A_t$ inside $\cP_1$: more precisely, let $\theta' = \frac{1+\theta}{2}$ and let $\cC_t$ be the quadrilateral connecting the four points  
	\[
	1-\frac{3(1+i\theta')}{\sqrt{t}},\; 
	1-\frac{3(1-i\theta')}{\sqrt{t}},\; 
	1-\frac{(1-i\theta')}{2\sqrt{t}},\; 
	1-\frac{(1+i\theta')}{2\sqrt{t}},
	\]  
	then $\cC_t$ is indeed included in $\cP_1$ for $t \geq 100$.
	See Figure~\ref{fig:as} for an illustration.
	\begin{figure}[tbp]
		\centering
		\begin{tikzpicture}[scale=3, >=latex]
		\def\rad{0.707106781}  
		\def\theta{0.6}
		\def\thetaprime{0.8}
		\def\lw{0.71} 
		\def\rw{1.08} 
		\def\marg{.03} 
		\def\scale{0.07} 
		
		\begin{scope}[scale=1.2]
		\fill[Yellow, fill opacity=0.5, draw=none]
		(1,0) -- (0.5,0.5) 
		arc[start angle=45, end angle=135, radius=\rad] 
		-- (-0.5,0.5) -- (-1,0) 
		-- (-0.5,-0.5) 
		arc[start angle=-135, end angle=-45, radius=\rad] 
		-- (0.5,-0.5) -- cycle;
		
		\draw[ForestGreen, thick] 
		(1,0) -- (0.5,0.5)   
		(0.5,-0.5) -- (1,0)  
		(-1,0) -- (-0.5,0.5) 
		(-0.5,-0.5) -- (-1,0); 
		
		\fill (1,0) circle (0.5pt);
		\fill (-1,0) circle (0.5pt);
		\fill[Black] (0.5,0.5) circle (0.5pt);
		\fill[Black] (0.5,-0.5) circle (0.5pt);
		\fill[Black] (-0.5,0.5) circle (0.5pt);
		\fill[Black] (-0.5,-0.5) circle (0.5pt);
		
		\node at (1.02,0) [below] {$1$};
		
		\fill[RoyalBlue!50, draw=none] (1-2*\scale,2*\theta*\scale) -- (1-\scale,\theta*\scale) -- (1-\scale,-\theta*\scale) -- (1-2*\scale,-2*\theta*\scale) -- cycle;
		
		\draw[Red,thick] (1-3*\scale,3*\thetaprime*\scale) -- (1-\scale/2,\thetaprime*\scale/2) -- (1-\scale/2,-\thetaprime*\scale/2) -- (1-3*\scale,-3*\thetaprime*\scale) -- cycle;
		\draw[Red,thick,->,>=to] (1-3*\scale,-3*\thetaprime*\scale) --  (1-3*\scale,2*\thetaprime*\scale);
		
		\draw[->] (-1.1,0) -- (1.2,0); 
		\draw[->] (0,-0.8) -- (0,0.85); 
		
		\draw[rounded corners] (\lw,1-\lw+\marg) -- (\lw,\lw-1-\marg) -- (\rw,\lw-1-\marg) -- (\rw,1-\lw+\marg) -- cycle;
		
		\draw[->] (1.13,.2) to[bend left=10] (1.9,.33);
		\end{scope}
		\begin{scope}[scale=3, xshift=1.8]
		\fill[Yellow, fill opacity=0.5, draw=none]
		(1,0) -- (\lw,1-\lw) -- (\lw,\lw-1) -- cycle;
		\draw[ForestGreen,thick] (\lw,1-\lw) -- (1,0) -- (\lw,\lw-1);
		
		\fill (1,0) circle (0.2pt);
		\node at (1.02,0) [below] {$1$};
		
		\fill[RoyalBlue!50, draw=none] (1-2*\scale,2*\theta*\scale) -- (1-\scale,\theta*\scale) -- (1-\scale,-\theta*\scale) -- (1-2*\scale,-2*\theta*\scale) -- cycle;
		\node at (1-1.5*\scale,.4*\scale) {$A_t$};
		
		\draw[Red,thick] (1-3*\scale,3*\thetaprime*\scale) -- (1-\scale/2,\thetaprime*\scale/2) -- (1-\scale/2,-\thetaprime*\scale/2) -- (1-3*\scale,-3*\thetaprime*\scale) -- cycle;
		\draw[Red,thick,->,>=to] (1-3*\scale,-3*\thetaprime*\scale) --  (1-3*\scale,2*\thetaprime*\scale);
		\node[below left] at (1-3*\scale,2*\thetaprime*\scale) {$\cC_t$};
		
		\draw (\lw,0) -- (\rw,0);
		\draw[rounded corners] (\lw,1-\lw+\marg) -- (\lw,\lw-1-\marg) -- (\rw,\lw-1-\marg) -- (\rw,1-\lw+\marg) -- cycle;
		\end{scope}
		\end{tikzpicture}
		\caption{Representation of the region $A_t$ in blue and of the contour $\cC_t$ in red.}
		\label{fig:as}
	\end{figure}
	Recall that the map $z \mapsto W_\infty(z)$ is analytic in $\cP_1$ by \cite{Big1992}; hence so is $z \mapsto \frac{W_\infty(z)}{1-z} - 2Z_\infty$. By Cauchy's integral formula, for every $z \in A_t$,  
	\[
	\abs{\frac{W_\infty(z)}{1-z} - 2Z_\infty}
	= \frac{1}{2\pi} \abs{ \int_{\cC_t} \left( \frac{W_\infty(\zeta)}{1-\zeta} - 2Z_\infty \right) \frac{\diff \zeta}{\zeta - z}}.
	\] 
	Next, observe that there exists a constant $c_\theta>0$ such that for any $z\in A_t$ and $\zeta \in \cC_t$, $\abs{\zeta -z} \geq c_\theta/\sqrt{t}$.
	Hence, writing $m(\diff \zeta)$ for the Lebesgue measure on $\cC_t$,
	\[
	\sup_{z\in A_t} \abs{\frac{W_\infty(z)}{1-z} - 2Z_\infty}
	\le \frac{ \sqrt{t} }{c_\theta} 
	\int_{\cC_t} \abs{\frac{W_\infty(\zeta)}{1-\zeta} - 2Z_\infty} m(\diff \zeta).
	\]
	Taking the expectation, applying Fubini theorem and then Corollary~\ref{cor:concentration_for_as_cv} yields that for all $t\ge t_K$,
	\begin{align}\label{eq:uniform_concentration}
	\E\left[ \1_{G_K} \sup_{z\in A_t} \abs{\frac{W_\infty(z)}{1-z} - 2Z_\infty} \right] 
	&\le \frac{ \sqrt{t}}{c_\theta} \int_{\cC_t} \E\left[ \1_{G_K} \abs{\frac{W_\infty(z)}{1-z} - 2Z_\infty} \right] m(\diff \zeta) \nonumber\\
	&\le \frac{\sqrt{t}}{c_\theta}\cdot\frac{ C_{\eqref{eq:concentration_for_as_cv}} K}{ t^{1/4}} \int_{\cC_t} m(\diff \zeta) \nonumber\\
	&\le C_{\eqref{eq:uniform_concentration}} K t^{-1/4},
	\end{align}
	Now set $t = 2^k$ with $k\in\mathbb{N}$. 
	By Markov's inequality and \eqref{eq:uniform_concentration}, for any $K\ge 1$,
	\begin{align*}
	\sum_{k > \log_2 t_K} \P\left( \sup_{z\in A_{2^k}} \abs{\frac{W_\infty(z)}{1-z} - 2Z_\infty} \ge \frac1k,\; G_K \right)
	& \le \sum_{k > \log_2 t_K} k \;\E\left[ \1_{G_K} \sup_{z\in A_{2^k}} \abs{\frac{W_\infty(z)}{1-z} - 2Z_\infty} \right] \\
	& \le C_{\eqref{eq:uniform_concentration}} K \sum_{k\ge 1} k 2^{-k/4} 
	< \infty.
	\end{align*}
	It then follows from the Borel–Cantelli lemma that, on the event $G_K$, almost surely, with $z = \beta+i\tau$,
	\[
	\sup_{\abs{\tau} \leq \theta (1-\beta)} \abs{\frac{W_\infty(z)}{1-z} - 2Z_\infty} \xrightarrow[\beta\nearrow1]{} 0.
	\]
	Since $\P(G_K)\to 1$ as $K\uparrow\infty$ by \eqref{eq:proba_G_K}, this completes the proof.
\end{proof}

\section{Fluctuations: proof of Theorem~\ref{thm:fluctu}}
\label{sec:fluctu}

In this section, we turn to the study of fluctuations in the convergence of $W_\infty(z)/(1-z)$ towards $2Z_\infty$. 
The main idea is to first compare $W_\infty(z)/(1-z)$ with $Z_{At}(F_{\sqrt{A}w},\gamma)$, and then apply the known results on the $1$-stable fluctuations of $Z_{At}(F_{\sqrt{A}w},\gamma)$ around its limit $2Z_\infty$, established in \cite{MaiPai2021}.
The first step is done in the following corollary, obtained as a consequence of Lemma~\ref{lem:concentration_W_infty}.

\begin{cor} \label{cor:concentration_for_fluctu}
	For any $w \in \{ a+ib : a\in (0,\infty),\; |b| \leq a \}$, the following convergence holds, with $\gamma = \frac{1}{2} \log t$ and $z = 1 - \frac{w}{\sqrt{t}}$:
	\begin{equation*} 
	\sqrt{t} \left( \frac{W_\infty(z)}{1-z} - e^{(1-z)\gamma} Z_{At}(F_{\sqrt{A}w},\gamma) \right)
	\xrightarrow[t,A\to\infty]{(\P)} 0,
	\end{equation*}
	where we first let $t\to\infty$ and then $A\to\infty$.
\end{cor}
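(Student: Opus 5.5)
We apply Lemma~\ref{lem:concentration_W_infty} with $\gamma = \frac12\log t$ on a good event whose probability tends to one, and use Markov's inequality. Fix $w = a+ib$ with $a>0$, $|b|\le a$, and choose $\delta\in(0,1)$ so that $w \in \{a'+ib' : a'\in[\delta,\delta^{-1}],\ |b'|\le a'\}$. For $t \ge 5\delta^{-2}$ large enough we have $|\gamma| = \frac12\log t \le \sqrt t$, so Lemma~\ref{lem:concentration_W_infty} applies for every $A>0$ and $K>0$. Since $e^{-\gamma/2} = t^{-1/4}$, it gives
\[
\E\left[ \1_{E_{[0,At],-K}\cap E_{[At,\infty),\gamma}} \sqrt t \left| \frac{W_\infty(z)}{1-z} - e^{(1-z)\gamma} Z_{At}(F_{\sqrt A w},\gamma)\right| \right] \le \frac{C_{\eqref{eq:concentration_W_infty}} K^{1/2}}{A^{1/4}}.
\]
The factor $\sqrt t$ in the statement is exactly compensated by $t^{-1/4}$ from the lemma and $t^{-1/4}$ from $e^{-\gamma/2}$. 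This cancellation is why the level $\frac12\log t$ is the right choice.

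Next we control the probability of the good event. For any $\eta>0$ we need
\[
\limsup_{t\to\infty}\P\left( (E_{[0,At],-K}\cap E_{[At,\infty),\frac12\log t})^c \right) \le \varepsilon_K,
\]
uniformly in $A \geq 1$ (or at least for each fixed $A$), with $\varepsilon_K\to 0$ as $K\to\infty$.

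For the first event, $E_{[0,At],-K} \supset E_{[0,\infty),-K}$. Since $\min_{u\in\calN_t}X_u(t)\to\infty$ a.s., the global minimum over all times is a.s.\ finite, so $\P(E_{[0,\infty),-K})\to1$ as $K\to\infty$, uniformly in $A$ and $t$.

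The second event is the main obstacle: all particles must stay above $\frac12\log t$ after time $At$. I would use the known fact that the minimum satisfies $\min_{u\in\calN_s}X_u(s) - \frac12\log s \to +\infty$ a.s. This is Hu--Shi's liminf result for BBM: $\liminf_s (\min X(s) - \frac12\log s)/\log\log s = 1$ a.s., hence $\min X(s) - \frac12\log s\to\infty$. Then $\inf_{s\ge At}\bigl(\min_{u} X_u(s) - \frac12\log t\bigr) \ge \inf_{s\ge At}\bigl(\min_u X_u(s)-\frac12\log s\bigr) \to \infty$ a.s. as $t\to\infty$, provided $A\ge 1$. So $\P(E_{[At,\infty),\frac12\log t})\to1$ as $t\to\infty$ for each fixed $A\ge1$.

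If I prefer to avoid citing Hu--Shi, I would instead use the first-moment bound $\P(\exists s\ge T, u: X_u(s)\le \frac12\log T - y)$. Via the many-to-one lemma and the barrier estimates of \cite{MaiPai2019}, conditioned on staying above $-K$, this probability is $O(K e^{-y}+\dots)$. Taking $y$ of order one and tracking the constant would give an error that tends to zero as $K\to\infty$. In that version I would shift $\gamma$ to $\frac12\log t - y$; this only changes the bound by the factor $e^{y/2}$, and the statement as written needs the exact $\gamma$, so the Hu--Shi route is cleaner.

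To conclude, for $\eta>0$, Markov's inequality gives
\[
\P\left(\sqrt t\left|\frac{W_\infty(z)}{1-z} - e^{(1-z)\gamma}Z_{At}(F_{\sqrt Aw},\gamma)\right|>\eta\right) \le \frac{C K^{1/2}}{\eta A^{1/4}} + \P(E_{[0,\infty),-K}^c) + \P(E^c_{[At,\infty),\gamma}).
\]
Let $t\to\infty$, so the last term vanishes. Then let $A\to\infty$, so the first term vanishes. Finally let $K\to\infty$, so the middle term vanishes. This proves the claimed convergence in probability in the iterated limit.
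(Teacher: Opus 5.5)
The overall structure matches the paper. You use Markov's inequality on $E_{[0,At],-K}\cap E_{[At,\infty),\gamma}$ together with Lemma~\ref{lem:concentration_W_infty}, where $\sqrt t\cdot t^{-1/4}e^{-\gamma/2}=1$. Your treatment of $E_{[0,At],-K}$ via the a.s.\ finiteness of the all-time minimum is also fine; the paper uses the quantitative bound $e^{-K}$ instead. The gap is in the control of $E^c_{[At,\infty),\gamma}$.

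The claim $\min_u X_u(s)-\frac12\log s\to+\infty$ a.s.\ is not Hu--Shi's theorem. Their a.s.\ result is the first-order statement $\liminf_n \min_{|u|=n}V(u)/\log n=\frac12$ for the BRW, and your claim is in fact false. More to the point, the consequence you use, $\P(E_{[At,\infty),\frac12\log t})\to1$ as $t\to\infty$ for fixed $A$, is false. Work given $\mathcal{F}_{At}$, on $\{\min_u X_u(At)>\gamma\}$. The expected number $m$ of particles that later hit $\gamma$ is then $\sum_u e^{-(X_u(At)-\gamma)}=\sqrt t\,W_{At}(1)$. By \eqref{eq:seneta-heyde}, $m$ converges in probability to $\sqrt{2/\pi}\,Z_\infty/\sqrt A$. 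The many-to-two formula bounds the conditional second moment of this count by $Cm+m^2$. Paley--Zygmund then bounds the conditional probability of at least one hit below by $m/(C+m)$, which does not vanish as $t\to\infty$. So the last term of your final display does not go to $0$ when you let $t\to\infty$ first; it only becomes small once $A$ is large.

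The fix, which is what the paper does, exploits the order of limits. Bound $\P(E_{[0,At],-K}\cap E^c_{[At,\infty),\gamma})$ by two terms:
\begin{itemize}
\item $\P(\min_u X_u(At)\le\gamma)$, which tends to $0$ as $t\to\infty$ by Bramson, since the minimum at time $At$ is $\frac32\log(At)+O_\P(1)$;
\item the expected number of particles killed at $\gamma$ after time $At$, restricted to $E_{[0,At],-K}$. This is at most $e^{\gamma}\E[\1_{E_{[0,At],-K}}W_{At}(1)]\le\sqrt t\cdot CK/\sqrt{At}=CK/\sqrt A$ by \cite[Eq.~(3.16)]{MaiPai2021}.
\end{itemize}
The second term vanishes as $A\to\infty$ for fixed $K$, and only then do you send $K\to\infty$. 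The restriction to $E_{[0,At],-K}$ is essential, since without it $\E[W_{At}(1)]=1$ only gives the useless bound $\sqrt t$.

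This is exactly your ``alternative'' first-moment route with $y=\frac12\log A$, because $\frac12\log t=\frac12\log(At)-\frac12\log A$. So no shift of $\gamma$ is needed. Also, the error $CKe^{-y}$ is removed by $A\to\infty$, not by $K\to\infty$ as you wrote, since it grows with $K$.
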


\begin{proof}
	Fix $w \in \{ a+ib : a\in (0,\infty),\; |b| \leq a \}$ and $\varepsilon>0$. Recall the notation $E_{I,\gamma}$ from \eqref{eq:def_E}. For any $K,A,t>0$, we have
	\begin{align}
	& \P \left( \left| \frac{W_\infty(z)}{1-z} - e^{(1-z)\gamma} Z_{At}(F_{\sqrt{A}w},\gamma) \right|
	\geq \frac{\varepsilon}{\sqrt{t}} \right) 
	\nonumber \\
	& \leq \frac{\sqrt{t}}{\varepsilon} \E \left[ \1_{E_{[0,At],-K} \cap E_{[At,\infty),\gamma}} \left| \frac{W_\infty(z)}{1-z} - e^{(1-z)\gamma} Z_{At}(F_{\sqrt{A}w},\gamma) \right| \right]
	\nonumber \\
	& \quad + \P \left( E_{[0,At],-K}^c \right) 
	+ \P \left( E_{[0,At],-K} \cap E_{[At,\infty),\gamma}^c \right).
	\end{align}
	We now show that each term on the right-hand side vanishes as we successively let $t\to\infty$, then $A\to\infty$, and finally $K\to\infty$.
	\begin{itemize}
		\item By Lemma~\ref{lem:concentration_W_infty}, the first term is bounded by $C_6 K^{1/2} \varepsilon / A^{1/4}$, where $C_6$ depends on $w$ (indeed, we may choose $\delta>0$ according to $w$ so that the constant $C_{\eqref{eq:concentration_W_infty}}$ depends on $w$). Hence this term vanishes as desired.
		\item By \cite[Eq.\@ (D.1)]{MaiPai2019}, the second term is at most $e^{-K}$, which also vanishes.
		\item For the third term, assume that from time $At$ onward, particles are killed when they reach a level less than or equal to $\gamma$. Let $N_{(At,\infty),\gamma}$ denote the total number of particles killed during $(At,\infty)$. Then
		\begin{equation}
		\P \left( E_{[0,At],-K} \cap E_{[At,\infty),\gamma}^c \right)
		\leq \P \left( \min_{u\in\mathcal{N}_{At}} X_u(At) \leq \gamma \right)
		+ \E \left[ \1_{E_{[0,At],-K}} N_{(At,\infty),\gamma} \right].
		\end{equation}
		The first term on the right-hand side vanishes as $t\to\infty$ by \cite[Proposition~3]{Bra1978}.
		For the second term, using \cite[Eq.\@ (3.33)]{MaiPai2021}, we obtain
		\begin{equation}
		\E \left[ \1_{E_{[0,At],-K}} N_{(At,\infty),\gamma} \right]
		\leq e^{\gamma} \E \left[ \1_{E_{[0,At],-K}} W_{At}(1) \right]
		\leq \sqrt{t} \cdot \frac{C_4 K}{\sqrt{At}},
		\end{equation}
		where the last inequality follows from \cite[Eq.~(3.16)]{MaiPai2021}. This bound vanishes as required.
	\end{itemize}
	Thus all terms vanish in the prescribed limit, concluding the proof.
\end{proof}

The proof of Theorem~\ref{thm:fluctu} then mainly follows from results \cite{MaiPai2021}, which we first recall here.
Consider $F\colon \R \to \R$ which is twice differentiable on $(0,\infty)$ and satisfies, for some constant $C>0$, $\abs{F''(x)} \leq C e^{Cx}$ for all $x>0$.
Recalling the definition of $\rho$ in \eqref{eq:cv_Z_t(f)}, define
\begin{equation} \label{eq:def_R}
\mathscr{R}F(r) \coloneqq 
\rho(F(\sqrt{1-r} \cdot{} ))\1_{\{r< 1\}} - \rho(F),
\qquad r \geq 0.
\end{equation}
Fix $a>0$.
Then it follows from \cite[Theorem~1.2]{MaiPai2021} that $\mathscr R F(\cdot/a) \in \cG$ (introduced in \eqref{eq:cG}) and 
\begin{equation} \label{eq:theorem1.2}
\sqrt t \cdot \left( 
Z_{at}(F) - \rho(F) Z_\infty
+ \frac{\log t}{2 \sqrt{t}} 
\int_0^\infty \mathscr R F\left( \frac{r}{a} \right) \frac{Z_\infty}{\sqrt{2\pi}}  \frac{\diff r}{r^{3/2}}
\right)
\xrightarrow[t\to\infty]{(\textup{d})}
\int_0^\infty \mathscr R F \left( \frac{r}{a} \right) M_{Z_\infty}(\diff r),
\end{equation}	
where $M_{Z_\infty}$ was introduced around \eqref{eq:characteristic_function_M}, and the convergence holds jointly for any finite number of test functions satisfying the same assumption as $F$.
Furthermore, letting $\gamma = \frac{1}{2} \log t$, it follows from \cite[Proposition~6.4]{MaiPai2021} (the function $F$ satisfies the assumptions of this result with $\alpha = 0$ and $\kappa=C+1$ up to dividing it by a constant) that
\begin{equation} \label{eq:proposition6.4}
\sqrt{t} \cdot \left(
Z_{at}(F) - Z_{at} (F,\gamma) 
+ \frac{\log t}{2 \sqrt{at}}  
\int_0^\infty \mathscr R F(r) \frac{Z_\infty}{\sqrt{2\pi}} \frac{\diff r}{r^{3/2}}
\right)
\xrightarrow[t\to\infty]{(\P)} 0.
\end{equation}	
Changing $r$ into $r/a$ in the integral in \eqref{eq:proposition6.4} and then combining it with \eqref{eq:theorem1.2}, we get
\begin{equation} \label{eq:combined}
\sqrt t \cdot \left( 
Z_{at}(F,\gamma) - \rho(F) Z_\infty
\right)
\xrightarrow[t\to\infty]{(\textup{d})}
\int_0^\infty \mathscr R F \left( \frac{r}{a} \right) M_{Z_\infty}(\diff r),
\end{equation}	
where the convergence holds jointly for any finite number of test functions satisfying the same assumption as~$F$.
The fact that shifting $Z_{at}(F)$ by $\gamma = \frac{1}{2} \log t$ makes the $(\log t)/\sqrt{t}$ correction term disappear had already been observed in \cite{MaiPai2019,MaiPai2021}.

\begin{proof}[Proof of Theorem~\ref{thm:fluctu}] 
	Recall $\cW = \{ w = a+ib \in \C : a > 0, \abs{b} \leq a \}$.
	We write $\gamma = \frac{1}{2} \log t$ and $z = 1- \frac{w}{\sqrt{t}}$.
	By Corollary~\ref{cor:concentration_for_fluctu}, it is enough to prove that, for any $w\in \cW$ and $A>0$,
	\begin{equation} \label{eq:goal1}
	\sqrt{t} \cdot \left( (e^{(1-z)\gamma}-1) Z_{At}(F_{\sqrt{A}w},\gamma) + \frac{w \log t}{\sqrt{t}} Z_\infty \right) 
	\xrightarrow[t \to \infty]{(\P)} 0.
	\end{equation}
	and that, in the sense of finite-dimensional distributions,
	\begin{equation} \label{eq:goal2}
	\left( \sqrt{t} \cdot \left( Z_{At}(F_{\sqrt{A}w},\gamma) - 2 Z_\infty \right) \right)_{w \in \cW} 
	\xrightarrow[t,A \to \infty]{(\textup{d})} 
	\left( \int_0^\infty 2 \left( e^{-r w^2/2} -1 \right) M_{Z_\infty}(\diff r) \right)_{w \in \cW},
	\end{equation}
	where we first let $t\to\infty$ and then $A\to\infty$.
	
	We start with \eqref{eq:goal2}. 
	Let $w \in \cW$. 
	Recall that $F_w(x) = 2 e^{-w^2/2} G(wx)$ for any $x \in \R$ where we introduced $G(z) = \sinh(z)/z$ for $z \in \C$.
	By a series expansion, observe that $\abs{G''(z)} \leq \sum_{k\geq0} \frac{\abs{z}^{2k}}{(2k+1)!} \leq e^{\abs{z}}$, so that the functions $\re F_w$ and $\im F_w$ both satisfy the assumption under which \eqref{eq:combined} holds.
	Also recall from Lemma~\ref{lem:rho(F_w)} that $\rho(F_w) = 2$.
	Thus, we get, for any $A>0$, in the sense of finite-dimensional distributions, 
	\begin{equation} \label{eq:goal2_partial}
	\left( \sqrt{t} \cdot \left( Z_{At}(F_{\sqrt{A}w},\gamma) - 2 Z_\infty \right) \right)_{w \in \cW} 
	\xrightarrow[t \to \infty]{(\textup{d})} 
	\left( \int_0^\infty \mathscr R F_{\sqrt{A}w} \left( \frac{r}{A} \right) M_{Z_\infty}(\diff r) \right)_{w \in \cW},
	\end{equation}
	Then, noting that for $w \in \C$ and $r \in [0,1)$,  $F_w(\sqrt{1-r} \cdot{}) = e^{-r w^2/2} F_{w \sqrt{1-r}}$ and using again Lemma~\ref{lem:rho(F_w)}, one finds that
	\begin{align*}
	\mathscr{R}F_{\sqrt{A} w}\!\left(\frac{r}{A}\right) 
	= \rho\left(F_{\sqrt{A} w} \left(\sqrt{1-\tfrac{r}{A}}\cdot{} \right)\right) \1_{\{r/A < 1\}} - \rho(F_{\sqrt{A} w})
	= 2\left(e^{-r w^2/2}\1_{\{r<A\}} -1\right),
	\end{align*}
	which converges to $g_w(r) \coloneqq 2(e^{-r w^2/2} -1)$ as $A\to\infty$. 
	It can be easily checked that, for any $w\in\mathbb{C}$, $g_w \in \cG$ so that $\int_0^\infty g_w(r) M_{Z_\infty}(\mathrm{d}r)$ is well-defined. It then follows from a chracteristic function calculation and from the dominated convergence theorem that, in the sense of finite-dimensional distributions, 
	\begin{equation} \label{eq:cv_in_A_of_the_limit}
	\left( \int_0^\infty \mathscr R F_{\sqrt{A}w} \left( \frac{r}{A} \right) M_{Z_\infty}(\diff r) \right)_{w \in \cW}
	\xrightarrow[A \to \infty]{(\textup{d})} 
	\left( \int_0^\infty g_w(r) M_{Z_\infty}(\diff r) \right)_{w \in \cW}.
	\end{equation}
	Together with \eqref{eq:goal2_partial}, this proves \eqref{eq:goal2}.
	
	We now turn to \eqref{eq:goal1}. Fix $w\in \cW$ and $A>0$.
	Noting that $e^{(1-z)\gamma}-1 = - \frac{w \log t}{\sqrt{t}} + o(\frac{1}{\sqrt{t}})$ as $t \to \infty$, the left-hand side of \eqref{eq:goal1} equals
	\begin{equation*}
		-w \log t \cdot \left( Z_{At}(F_{\sqrt{A}w},\gamma) - Z_\infty\right) + o(Z_{At}(F_{\sqrt{A}w},\gamma))
		\xrightarrow[t\to\infty]{(\P)} 0,
	\end{equation*}
	as a consequence of \eqref{eq:goal2_partial}.
\end{proof}

\begin{proof}[Proof of Corollary~\ref{cor:fluctu}]
	Applying Theorem~\ref{thm:fluctu} with $\beta = z = 1- \frac{1}{\sqrt{t}}$, 
	\begin{equation*}
	\frac{1}{1-\beta}\left( \frac{W_\infty(\beta)}{1-\beta} - 2 Z_\infty +2(1-\beta)\log(1-\beta) Z_\infty\right)
	\xrightarrow[\beta \nearrow 1]{(\textup{d})} 
	\int_0^\infty g_1(r) M_{Z_\infty}(\diff r),
	\end{equation*}
	 with $g_1(r) = 2(e^{-r/2}-1)$.
	 By \eqref{eq:characteristic_function_M} (or see \cite[Remark 1.4]{MaiPai2021}), conditionally on $Z_\infty$, $\int_0^\infty g_1(r) M_{Z_\infty}(\diff r)$ is of 1-stable distribution $S_1(\sigma_{g_1}Z_\infty, \beta_{g_1}, \mu_{g_1}Z_\infty)$ where
	 \begin{equation*}
	 \sigma_{g_1} \coloneqq \int_0^\infty \abs{g_1(r)} \frac{\sqrt{\pi}}{2\sqrt{2}}\frac{\diff r}{r^{3/2}}, \quad 
	 \beta_{g_1} \coloneqq \frac{\int_0^\infty g_1(r) \frac{\diff r}{r^{3/2}}}{\int_0^\infty \abs{g_1(r)} \frac{\diff r}{r^{3/2}}}, \quad
	 \mu_{g_1} \coloneqq - \int_0^\infty g_1(r) \frac{2}{\pi} (\log \abs{g_1(r)} - \mu_Z)
	 \frac{\sqrt{\pi}}{2\sqrt{2}}\frac{\diff r}{r^{3/2}}.
	 \end{equation*}
	 Integrating by parts $1/r^{3/2}$, one finds $\sigma_{g_1} = \sqrt{\pi} \Gamma(1/2) = \pi$.
	 Noting that $g_1 \leq 0$, one gets $\beta_{g_1} = -1$.
	 Finally, expanding $\log(1-e^{-r/2}) = -\sum_{k\geq1} e^{-kr/2}/k$ and switching the sum and the integral by Fubini--Tonelli theorem, one gets
	 \begin{align*}
	 \mu_{g_1} 
	 & = \frac{2}{\pi} (\log 2 -\mu_Z) \sigma_{g_1} 
	 +  \sum_{k\geq 1} \int_0^\infty (e^{-r/2}-1) \frac{e^{-kr/2}}{k} 
	 \sqrt{\frac{2}{\pi}} \frac{\diff r}{r^{3/2}} \\
	 & = 2 \Biggl( \log 2 -\mu_Z - \sum_{k\ge 1}\frac{\sqrt{k+1}-\sqrt{k}}{k} \Biggr),
	 \end{align*}
	 using $\sigma_{g_1} = \pi$ and integrating by parts $1/r^{3/2}$ as before.
	 In the series, multiplying the numerator and the denominator by $\sqrt{k+1}+\sqrt{k}$ gives the desired form.
\end{proof}

\section*{Acknowledgements}

Michel Pain would like to thank Vincent Vargas for raising the question of the almost sure convergence of $W_\infty(\beta)/(1-\beta)$ and acknowledges support from the MITI interdisciplinary program 80PRIME GEx-MBB and from the ANR project MBAP-P. Xinxin Chen acknowledges support from National Natural Science Foundation of China (Grant No. 12571148) and National Key R\&D Program of China (No. 2022YFA1006500).

\bibliographystyle{alpha}
\bibliography{biblio}

\end{document}